\newcommand{\arxiv}[1]{\href{http://arxiv.org/abs/#1}{{\tt arXiv:#1}}}
\numberwithin{equation}{section}
\theoremstyle{plain}
\newtheorem{theorem}{Theorem}[section]
\newtheorem{maintheorem}{Theorem}
\newtheorem{proposition}[theorem]{Proposition}
\newtheorem{lemma}[theorem]{Lemma}
\newtheorem{corollary}[theorem]{Corollary}
\newtheorem{question}[theorem]{Question}
\newtheorem{stepsa}{Step}
\newtheorem{stepsb}{Step}
\theoremstyle{definition}
\newtheorem{asm}[theorem]{Assumption}
\newtheorem{defn}[theorem]{Definition}
\newenvironment{definition}[1][]{\begin{defn}[#1]\pushQED{\qed}}{\popQED \end{defn}}
\newtheorem{notn}[theorem]{Notation}
\theoremstyle{remark}
\newtheorem{rmk}[theorem]{Remark}
\newenvironment{remark}[1][]{\begin{rmk}[#1] \pushQED{\qed}}{\popQED \end{rmk}}
\newtheorem{eg}[theorem]{Example}
\newenvironment{example}[1][]{\begin{eg}[#1] \pushQED{\qed}}{\popQED \end{eg}}
\newtheorem{conv}[theorem]{Convention}
\DeclareMathOperator{\Diff}{Diff}
\DeclareMathOperator{\SL}{SL}
\DeclareMathOperator{\Sp}{Sp}
\newcommand\Z{\ensuremath{\mathbb{Z}}}
\newcommand\Q{\ensuremath{\mathbb{Q}}}
\DeclareMathOperator{\HH}{H}
\newcommand\RH{\ensuremath{\widetilde{\HH}}}
\DeclareMathOperator{\CC}{C}
\DeclareMathOperator{\rank}{rk}
\DeclareMathOperator{\Aut}{Aut}
\DeclareMathOperator{\Out}{Out}
\newcommand\cB{\ensuremath{\mathcal{B}}}
\newcommand\cF{\ensuremath{\mathcal{F}}}
\newcommand\cL{\ensuremath{\mathcal{L}}}
\newcommand\cO{\ensuremath{\mathcal{O}}}
\newcommand\bA{\ensuremath{\mathbf{A}}}
\newcommand\bB{\ensuremath{\mathbf{B}}}
\newcommand\bbD{\ensuremath{\mathbb{D}}}
\newcommand\bbS{\ensuremath{\mathbb{S}}}
\newcommand\BTits{\ensuremath{\mathbf{Tits}  }}
\newcommand\hcL{\ensuremath{\widehat{\cL}}}
\DeclareMathOperator{\Tits}{Tits}
\DeclareMathOperator{\St}{St}
\DeclareMathOperator{\Mod}{Mod}
\DeclareMathOperator{\Curves}{\mathcal{C}}
\DeclareMathOperator{\Spheres}{\mathcal{S}}
\DeclareMathOperator{\SpheresNS}{\Spheres_{\text{ns}}}
\DeclareMathOperator{\NSpheres}{\mathcal{N}}
\DeclareMathOperator{\ANSpheres}{\mathcal{AN}}
\newcommand\ANSpheresEX{\ANSpheres_{\text{ex}}}
\DeclareMathOperator{\Link}{Link}
\DeclareMathOperator{\Cone}{Cone}
\DeclareMathOperator{\height}{ht}
\DeclareMathOperator{\Poset}{\mathcal{P}}
\DeclareMathOperator{\opp}{op}
\title[Free factors and dualizing modules]{The free factor complex and the dualizing module for the automorphism group of a free group}
\author{Zachary Himes}
\address{Dept of Mathematics; Purdue University; 150 N. University Street; West Lafayette, IN 47907}
\email{himesz@purdue.edu}
\author{Jeremy Miller}
\address{Dept of Mathematics; Purdue University; 150 N. University Street; West Lafayette, IN 47907}
\email{jeremykmiller@purdue.edu}
\thanks{JM was supported in part by a Simons Foundation Collaboration Grant.}
\author{Sam Nariman}
\address{Dept of Mathematics; Purdue University; 150 N. University Street; West Lafayette, IN 47907}
\email{snariman@purdue.edu}
\thanks{SN was supported in part by NSF grant DMS-2113828 and a Simons Foundation Collaboration Grant.}
\author{Andrew Putman}
\address{Dept of Mathematics; University of Notre Dame; 255 Hurley Hall; Notre Dame, IN 46556}
\email{andyp@nd.edu}
\thanks{AP was supported in part by NSF grant DMS-1811210.}
\date{November 7, 2022}
\begin{document}

\newpage

\begin{abstract}
Answering a question of Hatcher--Vogtmann, we prove that the top homology group of the free factor complex is not the dualizing module
for $\Aut(F_n)$, at least for $n = 5$.
\end{abstract}

\maketitle

\section{Introduction}
\label{section:introduction}

A group $\Gamma$ is a {\em rational duality group} of dimension $d$ if there exists a $\Q[\Gamma]$-module $\bbD$ called the
{\em dualizing module} such that for all $\Q[\Gamma]$-modules $M$, we have
\[\HH^{d-i}(G;M) \cong \HH_{i}(G;M \otimes_{\Q} \bbD) \quad \text{for all $i$}.\]
The rational cohomological dimension of such a $G$ is $d$, and the dualizing module $\bbD$ is unique:
\[\HH^d(G;\Q[G]) \cong \HH_0(G;\Q[G] \otimes_{\Q} \bbD) \cong \bbD.\]
Many geometrically important groups are rational duality groups,\footnote{What these references actually prove
is that these groups are virtual duality groups.  This is a stronger condition: all virtual duality groups
are rational duality groups, but the converse is false.  For instance, Deligne \cite{DeligneRF} constructed a
central extension $1 \rightarrow \mathbb{Z}/2 \rightarrow \widetilde{\Sp}_{2g}(\Z) \rightarrow \Sp_{2g}(\Z) \rightarrow 1$
such that $\widetilde{\Sp}_{2g}(\Z)$ has no torsion-free subgroup of finite index.  This implies that
$\widetilde{\Sp}_{2g}(\Z)$ is not a virtual duality group; however, since $\Sp_{2g}(\Z)$ and $\Z/2$ are rational
duality groups, the group $\widetilde{\Sp}_{2g}(\Z)$ is as well.}
e.g., lattices in semisimple Lie groups \cite{BorelSerre},
mapping class groups \cite{HarerDuality}, and outer automorphism groups of free groups \cite{BestvinaFeighnDuality}.

\subsection{Identifying the dualizing module}
The dualizing module of a rational duality group often has a simple geometric description.  Here are two examples:

\begin{example}
For $\SL_n(\Z)$, the associated {\em Tits building} is the geometric realization $\Tits(\Q^n)$ of
the poset of nontrivial proper subspaces of $\Q^n$.  The Solomon--Tits theorem \cite{SolomonTits} says that
$\Tits(\Q^n)$ is homotopy equivalent to a wedge of $(n-2)$-spheres.  The {\em Steinberg representation}
of $\SL_n(\Q)$, denoted $\St(\Q^n)$, is its unique nonzero reduced homology group:
\[\St(\Q^n) = \RH_{n-2}(\Tits(\Q^n);\Q).\]
Borel--Serre \cite{BorelSerre} proved $\SL_n(\Z)$ is a rational duality group with dualizing module $\St(\Q^n)$.
\end{example}

\begin{example}
For the mapping class group $\Mod(\Sigma_g)$ of a compact oriented genus-$g$ surface $\Sigma_g$, the {\em curve complex}
is the simplicial complex $\Curves_g$ whose $k$-simplices are sets $\{\gamma_0,\ldots,\gamma_k\}$ of distinct isotopy classes
of nontrivial simple closed curves on $\Sigma_g$ that can be realized disjointly.  Harer \cite{HarerDuality} proved
that for $g \geq 2$ the curve complex $\Curves_g$ is homotopy equivalent to a wedge of $(2g-2)$-spheres.  The
{\em Steinberg representation} of $\Mod(\Sigma_g)$, denoted $\St(\Sigma_g)$, is its unique nonzero reduced homology group:
\[\St(\Sigma_g) = \RH_{2g-2}(\Curves_g;\Q).\]
Harer \cite{HarerDuality} proved that $\Mod(\Sigma_g)$ is a rational duality group with dualizing module $\St(\Sigma_g)$.
\end{example}

These descriptions are useful since they allow calculations of the high-dimensional rational cohomology of
these groups and their finite-index subgroups; see, e.g.,
\cite{ChurchFarbPutmanVanish, ChurchFarbPutmanGL, ChurchPutmanCodim1, FullartonPutman, LeeSzczarba, MillerNagpalPatzt, MillerPatztPutman}.

\subsection{Steinberg module for automorphism group of free group}
Let $F_n$ be a free group of rank $n$.  Bestvina--Feighn \cite{BestvinaFeighnDuality} proved\footnote{What they actually
proved is that $\Out(F_n)$ is a rational duality group.  This fits into a short exact sequence
$1 \rightarrow F_n \rightarrow \Aut(F_n) \rightarrow \Out(F_n) \rightarrow 1$, and since both $\Out(F_n)$ and
$F_n$ are rational duality groups it follows that $\Aut(F_n)$ is as well.}
that $\Aut(F_n)$ is a rational
duality group of dimension $2n-2$.  However, their proof does not give an explicit model for its dualizing module.  Identifying
the dualizing module of $\Aut(F_n)$ remains a basic open problem.

Hatcher--Vogtmann \cite{HatcherVogtmann, HatcherVogtmannRevised} suggested studying the following.  A {\em free factor} of $F_n$
is a subgroup $A < F_n$ such that there exists another subgroup $B<F_n$ with $F_n = A \ast B$.  The
nontrivial proper free factors of $F_n$ form a poset called the {\em free factor complex}.
In analogy with the ordinary Tits building, we will denote its geometric realization by $\Tits(F_n)$, though
it is not actually a building.  Just like for $\Tits(\Q^n)$, Hatcher--Vogtmann \cite{HatcherVogtmann, HatcherVogtmannRevised}
proved that $\Tits(F_n)$ is homotopy equivalent to a wedge of $(n-2)$-dimensional spheres.
The {\em Steinberg module} for $\Aut(F_n)$, denoted $\St(F_n)$, is its unique nonzero reduced
homology group:
\[\St(F_n) = \RH_{n-2}(\Tits(F_n);\Q).\]
On \cite[p.\ 1]{HatcherVogtmann, HatcherVogtmannRevised}, Hatcher-Vogtmann asked the following question:

\begin{question}[Hatcher--Vogtmann]
\label{question:hatchervogtmann}
Is $\St(F_n)$ the dualizing module for $\Aut(F_n)$?
\end{question}

A consequence of our main theorem is that this question has a negative answer in general.

\subsection{Main theorem}
Our main theorem is as follows:

\begin{maintheorem}
\label{maintheorem:steinberghomology}
$\HH_i(\Aut(F_n);\St(F_n)) = 0$ for $n \geq 2$ and $i=0$ or $1$.
\end{maintheorem}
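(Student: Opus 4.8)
The plan is to follow the template that establishes the analogous vanishing for $\SL_n(\Z)$ --- Ash--Rudolph's argument for $i=0$ and Church--Putman \cite{ChurchPutmanCodim1} for $i=1$ --- adapting the ``apartment class'' machinery to the free factor complex. For a free basis $B=\{v_1,\dots,v_n\}$ of $F_n$, let $\Sigma_B\subseteq\Tits(F_n)$ be the full subcomplex on the free factors $\langle S\rangle$ with $\emptyset\neq S\subsetneq B$. Since the poset of proper nonempty subsets of an $n$-element set realizes to an $(n-2)$-sphere, $\Sigma_B$ carries a fundamental class $[\Sigma_B]\in\RH_{n-2}(\Tits(F_n);\Q)=\St(F_n)$, the \emph{apartment class} of $B$. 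The one substantial input beyond the Hatcher--Vogtmann sphericity theorem is a Solomon--Tits-type statement: the apartment classes span $\St(F_n)$ as a $\Q[\Aut(F_n)]$-module (this should be extractable from the methods of \cite{HatcherVogtmann}, or from the known high connectivity of the complex of partial bases of $F_n$ of Day--Putman and its later sharpenings).

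\emph{The case $i=0$.} Because $\Aut(F_n)$ acts transitively on free bases, the coinvariant module $H_0(\Aut(F_n);\St(F_n))=\St(F_n)_{\Aut(F_n)}$ is generated by the single class $[\Sigma_{B_0}]$ of the standard basis $B_0=\{x_1,\dots,x_n\}$. The setwise stabilizer of $\Sigma_{B_0}$ in $\Aut(F_n)$ is the finite signed permutation group $W=(\Z/2)^n\rtimes S_n$; the sign-flips $v_i\mapsto v_i^{-1}$ fix every vertex $\langle S\rangle$, so $W$ acts on the sphere $\Sigma_{B_0}$ through its quotient $S_n$, hence on $[\Sigma_{B_0}]$ by the sign character. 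As $n\geq2$ there is a transposition $w\in W$ with $w\cdot[\Sigma_{B_0}]=-[\Sigma_{B_0}]$, so $2[\Sigma_{B_0}]=0$ in the coinvariants, and therefore $[\Sigma_{B_0}]=0$ rationally. Thus $H_0(\Aut(F_n);\St(F_n))=0$.

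\emph{The case $i=1$.} Here the plan is to upgrade ``apartments span'' to an explicit presentation, as in \cite{ChurchPutmanCodim1}: produce the start of a resolution of $\St(F_n)$ by permutation $\Q[\Aut(F_n)]$-modules,
\[
P_2\xrightarrow{\ d_2\ }P_1\xrightarrow{\ d_1\ }\St(F_n)\to0,
\]
with $P_1=\Q[\Aut(F_n)]\otimes_{\Q[W]}\epsilon$ generated by the apartment class ($\epsilon$ being the $1$-dimensional $W$-representation on $\RH_{n-2}(\Sigma_{B_0})$ described above) and $P_2=\bigoplus_j\Q[\Aut(F_n)]\otimes_{\Q[W_j]}\epsilon_j$ generated by ``partial apartment'' relations coming from partial bases; the existence and sufficiency of such relations is where the high connectivity of the partial bases complex of $F_n$ enters. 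Since $W$ is finite and we work over $\Q$, Shapiro's lemma gives $H_*(\Aut(F_n);P_1)=H_*(W;\epsilon)$, which vanishes in every degree (the sign character has trivial $S_n$-coinvariants, as above). Hence the long exact sequence of $0\to\ker d_1\to P_1\to\St(F_n)\to0$ yields $H_1(\Aut(F_n);\St(F_n))\cong H_0(\Aut(F_n);\ker d_1)$, and since $d_2$ surjects onto $\ker d_1$ and $H_0(\Aut(F_n);-)$ is right exact, it suffices to prove
\[
H_0(\Aut(F_n);P_2)=\bigoplus_j(\epsilon_j)_{W_j}=0,
\]
i.e.\ that each partial-apartment stabilizer $W_j$ contains an element acting by $-1$ on $\epsilon_j$.

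\emph{The main obstacle.} The crux is this last vanishing together with pinning down the presentation. In contrast to $\SL_n(\Z)$, the ``parabolic'' stabilizers of partial apartments (partial bases) in $\Aut(F_n)$ are infinite and have non-abelian unipotent parts, so identifying the relation module precisely enough to name the $W_j$, and then exhibiting the required sign-reversing symmetry in each, calls for a structural analysis of stabilizers of free factors and of partial bases --- most plausibly organized as an induction on $n$, with $n=2,3$ handled by hand (for $n=2$, $\Tits(F_2)$ is a discrete set and $\St(F_2)$ is the corresponding augmentation representation). Establishing connectivity of the partial bases complex sharp enough to run this argument may itself be the bottleneck; failing that, one can instead feed the coresolution of $\St(F_n)$ coming directly from $\Tits(F_n)\simeq\bigvee S^{n-2}$ into an equivariant spectral sequence, at the price of having to compute the low-degree rational homology of those infinite flag stabilizers.
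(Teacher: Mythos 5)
Your overall strategy matches the paper's: realize $\St(F_n)$ as the cokernel of a map of $\Q[\Aut(F_n)]$-modules induced by a complex with known generators, and then kill the homology of the first two terms by exhibiting orientation-reversing elements in stabilizers. Your $i=0$ argument is essentially a special case of what the paper does, and the spanning result you cite as the ``one substantial input'' is indeed available (as the paper notes, it is equivalent to results of Sadofschi Costa \cite{CostaBases}, and it drops out of the paper's Theorem~\ref{theorem:stpresentation} as the surjectivity of $\CC_{n-1}(\NSpheres(M_{n,1});\Q)\to\St(F_n)$). That part is fine.

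For $i=1$, however, you have only a plan, and what you leave unresolved is precisely the technical heart of the paper. Two points. First, you do not actually produce the presentation $P_2\to P_1\to\St(F_n)\to 0$; the paper's route is to construct the augmented nonseparating sphere complex $\ANSpheres(M_{n,1})$, prove it is $(n-1)$-connected (Theorem~\ref{theorem:nspheresredcon}, which occupies \S\ref{section:augmentedsphere}--\S\ref{section:augmenteddisc} and requires the rather delicate ``disc local injectivity'' machinery of \S\ref{section:localinj}--\S\ref{section:badsimplex}), and then extract the presentation
\[
\CC_n(\ANSpheres(M_{n,1});\Q)\longrightarrow \CC_{n-1}(\NSpheres(M_{n,1});\Q)\longrightarrow \St(F_n)\longrightarrow 0
\]
(Theorem~\ref{theorem:stpresentation}) via Quillen's poset fiber criterion applied to the complement map into $\BTits(F_n)^{\opp}$. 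Second, your guess that the relation generators are indexed by ``partial apartments coming from partial bases'' is not right. In the paper's model the generators of $P_2$ are $n$-simplices of $\ANSpheres(M_{n,1})$: sphere systems of $n+1$ pairwise-nonseparating spheres whose union separates $M_{n,1}$ into exactly two pieces with the non-basepoint piece simply connected. These are overdetermined configurations (the analogue of Bykovskii's relations in the $\SL_n(\Z)$ case), not sub-bases. Once the presentation is in hand, the sign-reversal you need for all stabilizers $W_j$ is not a case-by-case algebraic analysis of flag stabilizers but a single geometric lemma (Lemma~\ref{lemma:fliplemma}): any such sphere system admits an orientation-preserving diffeomorphism of $M_{n,1}$ swapping two of its spheres while fixing the rest, which reverses the orientation of the simplex. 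Your proposal correctly identifies where the difficulty lies, but filling that gap is the bulk of the argument.
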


If $\St(F_n)$ were the dualizing module for $\Aut(F_n)$, this would imply that
\[\HH^{2n-2}(\Aut(F_n);\Q) \cong \HH_0(\Aut(F_n);\St(F_n)) = 0 \quad \text{for $n \geq 2$}\]
and
\[\HH^{2n-3}(\Aut(F_n);\Q) \cong \HH_1(\Aut(F_n);\St(F_n)) = 0 \quad \text{for $n \geq 3$}.\]
However, Gerlits \cite{GerlitsThesis} used a computer\footnote{See \cite{GerlitsCode} for the code.} to prove that
$\HH^7(\Aut(F_5);\Q) \cong \Q$, contradicting the second assertion.  We deduce the following:

\begin{corollary}
The Steinberg module $\St(F_5)$ is not the dualizing module for $\Aut(F_5)$.
\end{corollary}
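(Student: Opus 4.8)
The plan is to deduce the corollary formally from Theorem~\ref{maintheorem:steinberghomology} and from Gerlits's computer calculation of $\HH^7(\Aut(F_5);\Q)$. Suppose for contradiction that $\St(F_5)$ is the dualizing module of $\Aut(F_5)$. By Bestvina--Feighn \cite{BestvinaFeighnDuality}, $\Aut(F_5)$ is a rational duality group of dimension $2\cdot 5-2 = 8$, so by the defining property of its (unique) dualizing module there is, for every $\Q[\Aut(F_5)]$-module $M$ and every $i$, an isomorphism
\[
  \HH^{8-i}(\Aut(F_5);M)\;\cong\;\HH_i\bigl(\Aut(F_5);\,M\otimes_\Q\St(F_5)\bigr).
\]

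The key step is to specialize this to $M=\Q$ and $i=1$. Then $M\otimes_\Q\St(F_5)\cong\St(F_5)$, so the isomorphism reads $\HH^{7}(\Aut(F_5);\Q)\cong\HH_1(\Aut(F_5);\St(F_5))$, and the right-hand side vanishes by the $n=5$, $i=1$ case of Theorem~\ref{maintheorem:steinberghomology}. Hence $\HH^{7}(\Aut(F_5);\Q)=0$. But Gerlits \cite{GerlitsThesis} computed $\HH^{7}(\Aut(F_5);\Q)\cong\Q\neq 0$, a contradiction; therefore $\St(F_5)$ is not the dualizing module for $\Aut(F_5)$.

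There is no substantive obstacle in the corollary itself: it is a short logical deduction, and all of the real work sits in Theorem~\ref{maintheorem:steinberghomology} and in Gerlits's calculation, which we take as given. The only point requiring any care is the choice $i=1$ rather than $i=0$. The case $i=0$ of Theorem~\ref{maintheorem:steinberghomology} would, under the same hypothesis, only yield $\HH^{8}(\Aut(F_5);\Q)=0$, and this equality is not known to fail; it is precisely the codimension-one cohomology $\HH^{7}(\Aut(F_5);\Q)$ that Gerlits's computation shows to be nonzero, so the contradiction must be drawn from the $i=1$ case.
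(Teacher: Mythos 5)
Your proof is correct and follows essentially the same route as the paper: take $M=\Q$ and $i=1$ in the duality isomorphism, conclude $\HH^{7}(\Aut(F_5);\Q)\cong\HH_1(\Aut(F_5);\St(F_5))=0$ by Theorem~\ref{maintheorem:steinberghomology}, and contradict Gerlits's computation. Your closing remark about why $i=1$ (rather than $i=0$) is the case that does the work is a nice bit of exposition that the paper leaves implicit.
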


\begin{remark}
We do not know all the rational cohomology of $\Aut(F_n)$ for any $n \geq 6$, so it is
unclear whether or not our theorem implies that $\St(F_n)$ is not the dualizing module
for $\Aut(F_n)$ for $n \geq 6$.  In any case, we conjecture that it is never the dualizing
module except possibly in some low-rank degenerate cases.
\end{remark}

\begin{remark}
In \cite{BruckGupta}, Br\"{u}ck--Gupta study an $\Out(F_n)$-variant of $\Tits(F_n)$.
It would be interesting to adapt our techniques to study this complex.
\end{remark}

\subsection{Presentation}
To prove Theorem \ref{maintheorem:steinberghomology}, we construct an explicit presentation
for the $\Q[\Aut(F_n)]$-module $\St(F_n)$.  Our inspiration is a beautiful presentation for
$\St(\Q^n)$ constructed by Bykovskii \cite{Bykovskii}.  Church--Putman \cite{ChurchPutmanCodim1}
gave an alternate topological proof of Bykovskii's theorem, and we adapt their
approach to $\Aut(F_n)$.  The key is to find a highly connected simplicial complex that
forms an ``integral model'' for
the free factor complex $\Tits(F_n)$.  The simplicial complex we use is a variant of one used by Hatcher--Vogtmann's \cite{HatcherVogtmann, HatcherVogtmannRevised}
to prove that $\Tits(F_n)$ is homotopy equivalent to a
wedge of $(n-2)$-spheres.

\subsection{Sphere complex}
To describe this simplicial complex, we need to make two definitions.

\begin{definition}
Let $M_{n,b}$ be the connect sum of $n$ copies of $S^2 \times S^1$ with
$b$ disjoint open balls removed.  Our convention is that the connect sum of
$0$ copies of $S^2 \times S^1$ is the unit $S^3$ for the connect sum operation,
so $M_{0,b}$ is $S^3$ with $b$ disjoint open balls removed.
\end{definition}

\begin{definition}
A $2$-sphere embedded in $M_{n,b}$ is {\em essential} if it is not homotopic to $\partial M_{n,b}$ or a point.
A {\em rank-$k$ sphere system} in $M_{n,b}$ is a set $\{S_0,\ldots,S_k\}$ of distinct isotopy classes of
essential $2$-spheres embedded in $M_{n,b}$ that
can be realized disjointly.
\end{definition}

We will systematically conflate $2$-spheres in $M_{n,b}$ with their isotopy classes.  We
can now define the key simplicial complex, which was originally introduced by Hatcher \cite{HatcherStabilization}.

\begin{definition}
The {\em sphere complex} for $M_{n,b}$,
denoted $\Spheres(M_{n,b})$, is the simplicial complex whose $k$-simplices
are rank-$k$ sphere systems $\{S_0,\ldots,S_k\}$ in $M_{n,b}$.
\end{definition}

To connect this to $\Aut(F_n)$, fix a basepoint $\ast \in \partial M_{n,1}$.  The mapping class group
\[\Mod(M_{n,1}) = \pi_0(\Diff^{+}(M_{n,1},\partial M_{n,1}))\]
acts on $\pi_1(M_{n,1},\ast) \cong F_n$, giving a homomorphism
$\Mod(M_{n,1}) \rightarrow \Aut(F_n)$.  Laudenbach \cite{LaudenbachPaper, LaudenbachBook} proved
that this homomorphism is surjective and that its kernel is a rank-$n$ abelian $2$-group
generated by ``sphere twists'':
\[1 \longrightarrow (\Z/2)^n \longrightarrow \Mod(M_{n,1}) \longrightarrow \Aut(F_n) \longrightarrow 1.\]
See \cite{BrendleBroaddusPutman} for an alternate proof that shows that this
exact sequence splits.  Laudenbach also proved that all elements of the kernel fix the isotopy classes of all $2$-spheres
in $M_{n,1}$.  It follows that the action of $\Mod(M_{n,1})$ on $\Spheres(M_{n,1})$ factors
through an action of $\Aut(F_n)$ on $\Spheres(M_{n,1})$.

\subsection{Hatcher--Vogtmann's proof}
The starting point of Hatcher--Vogtmann's proof that $\Tits(F_n)$ is homotopy equivalent to a
wedge of $(n-2)$-spheres is a theorem of
Hatcher \cite{HatcherStabilization} saying that $\Spheres(M_{n,1})$ is contractible.  Hatcher--Vogtmann
consider the following subcomplex:

\begin{definition}
\label{definition:nspheres}
The {\em nonseparating sphere complex}\footnote{This is different from the complex of nonseparating spheres,
which we discuss in Definition \ref{definition:nonseparatingspheres}.  In the complex of nonseparating spheres,
the vertices are required to be nonseparating spheres, but there is no condition on the higher-dimensional
simplices.  The nonseparating sphere complex is also sometimes called the {\em purely} nonseparating
sphere complex.} of $M_{n,b}$, denoted $\NSpheres(M_{n,b})$, is the subcomplex
of $\Spheres(M_{n,b})$ whose $k$-simplices are rank-$k$ sphere systems $\{S_0,\ldots,S_k\}$ such that the union of the $S_i$
does not separate $M_{n,b}$.
\end{definition}

Using the fact that $\Spheres(M_{n,1})$ is contractible, Hatcher--Vogtmann prove that $\NSpheres(M_{n,1})$ is
homotopy equivalent to a wedge of $(n-1)$-spheres.  For each simplex $\{S_0,\ldots,S_k\}$
of $\NSpheres(M_{n,1})$, the fundamental group of the complement
\[\pi_1(M_{n,1} \setminus \bigcup_{i=0}^k S_i,\ast)\]
is a proper free factor of $F_n$.  It is a nontrivial free factor precisely when the simplex
is not a maximal simplex.  This provides a bridge between $\NSpheres(M_{n,1})$ and $\Tits(F_n)$,
which Hatcher--Vogtmann use to prove that $\Tits(F_n)$ is homotopy equivalent to a
wedge of $(n-2)$-spheres.

\subsection{Our approach}
It turns out that the fact that $\NSpheres(M_{n,1})$ is
homotopy equivalent to a wedge of $(n-1)$-spheres is sufficient to construct generators
for $\St(F_n)$, which are a sort of free group version of the spherical apartments in
a Tits building.  Following Church--Putman \cite{ChurchPutmanCodim1}, to find
the relations between these generators we need to add simplices to $\NSpheres(M_{n,1})$
to increase its connectivity from $(n-1)$ to $n$.  The result is what we call
the ``augmented nonseparating sphere complex'', and most of this paper is
a detailed study of its topology.

\begin{remark}
Though he did not state it explicitly and it takes some work to extract it from his paper,
in \cite{CostaBases} Sadofschi Costa proved results that are equivalent to our generating set for $\St(F_n)$.
\end{remark}

\subsection{Outline}
There are three main parts to this paper.  The first (\S \ref{section:prelim} -- \S \ref{section:badsimplex})
is several sections of topological preliminaries.  The second
(\S \ref{section:spheres} -- \S \ref{section:augmenteddisc}) is a detailed study of
the sphere complex and its subcomplexes.  Finally, the third (\S \ref{section:presentation} -- \S \ref{section:mainproof}) contains the proof of Theorem \ref{maintheorem:steinberghomology}.

\subsection{Acknowledgments}
We thank Benjamin Br\"{u}ck and Peter Patzt for helpful conversations and Richard Wade for comments
on a previous version of this paper.  We also would like to thank the referee for their careful
reading of our paper.

\section{Topological preliminaries}
\label{section:prelim}

We begin with some generalities about the topology of simplicial complexes.

\subsection{Connectivity conventions}
\label{section:connectivityconventions}
For any $d \in \Z$, we say that a space $X$ is $d$-connected if for $k \leq d$, all maps $\bbS^{k} \rightarrow X$
from the $k$-sphere $\bbS^{k}$ to $X$ extend over the $(k+1)$-disc $\bbD^{k+1}$.  There are two important
edge cases to this convention:
\begin{itemize}
\item For $k \leq -2$, we have $\bbS^k = \bbD^{k+1} = \emptyset$, so all spaces are $d$-connected for $d \leq -2$.
\item We have $\bbS^{-1} = \emptyset$ and $\bbD^0 = \{\text{pt}\}$, so a space $X$ is $(-1)$-connected precisely
when $X \neq \emptyset$.
\end{itemize}

\subsection{Links}
For a simplex $\sigma$ of a simplicial complex $X$, the {\em link} of $\sigma$, denoted $\Link_X(\sigma)$, is
the subcomplex of $X$ consisting of all simplices $\sigma'$ such that the join $\sigma \ast \sigma'$
is a simplex of $X$.  Notice that this implies that $\sigma'$ does not share any vertices with $\sigma$.
The join $\sigma \ast \Link_X(\sigma)$ is a subcomplex of $X$ called the {\em star} of $\sigma$.

\subsection{Combinatorial triangulations}
A {\em combinatorial triangulation} of an $n$-manifold with boundary
is defined inductively as follows.  First, 
any $0$-dimensional simplicial complex is a combinatorial triangulation of a $0$-manifold. 
 Next, for $n \geq 1$ a combinatorial triangulation of an $n$-manifold with boundary 
is a simplicial complex $M^n$ that is a topological $n$-manifold with boundary such that for all
simplices $\sigma$ of $M^n$, the complex $\Link_{M^n}(\sigma)$ is as follows:
\begin{itemize}
\item If $\sigma$ does not lie in $\partial M^n$, then $\Link_{M^n}(\sigma)$ is a combinatorial
triangulation of an $(n-\dim(\sigma)-1)$-sphere.
\item If $\sigma$ lies in $\partial M^n$, then $\Link_{M^n}(\sigma)$ is a combinatorial
triangulation of an $(n-\dim(\sigma)-1)$-disc.
\end{itemize}
Any subdivision of a combinatorial triangulation of a manifold with boundary
is also a combinatorial triangulation.

\section{Locally injective maps, spheres, and discs}
\label{section:localinj}

We now turn to some more technical aspects of simplicial complexes.

\subsection{Local injectivity}
If $f\colon Y \rightarrow X$ is a simplicial map between simplicial complexes, then for all
simplices $\sigma$ of $Y$ we have $\dim(\sigma) \geq \dim(f(\sigma))$.  For technical reasons,
it will be important for us to make this an equality whenever possible:

\begin{definition}
A simplicial map $f\colon Y \rightarrow X$ between simplicial complexes is {\em locally injective}
if for all simplices $\sigma$ of $Y$, we have $\dim(\sigma) = \dim(f(\sigma))$.
\end{definition}

\subsection{Sphere and disc local injectivity properties}
To keep our homotopies from getting out of control, it will be helpful to represent
homotopy classes in simplicial complexes by locally injective maps.  We therefore
make the following two definitions.

\begin{definition}
\label{definition:spherelocalinj}
We say that a simplicial complex $X$ has the {\em sphere local injectivity property} up to
dimension $d$ if the following holds for all $k \leq d$.  Let $f\colon \bbS^k \rightarrow X$
be a continuous map.  Then $f$ can be homotoped to a map $f\colon \bbS^k \rightarrow X$ that
is simplicial and locally injective for some combinatorial triangulation of $\bbS^k$.
\end{definition}

\begin{definition}
\label{definition:disclocalinj}
We say that a simplicial complex $X$ has the {\em disc local injectivity property} up to
dimension $d$ if the following holds for all $k \leq d$.  Let $f\colon \bbS^k \rightarrow X$
be a map that is simplicial and locally injective for some combinatorial triangulation
of $\bbS^k$.  Then there exists a combinatorial triangulation of $\bbD^{k+1}$ that extends
our given triangulation of $\bbS^k$ and a locally injective simplicial map $F\colon \bbD^{k+1} \rightarrow X$
extending $f$.
\end{definition}

\begin{example}
\label{example:edgecases}
All simplicial complexes have the sphere local injectivity property up to dimension $0$.
A simplicial complex has the disc local injectivity property up to dimension $-1$ if it is nonempty,
and has the disc local injectivity property up to dimension $0$ if it is nonempty, connected, and not
just a single point.
\end{example}

\begin{example}
It will follow from our discussion of weakly Cohen--Macaulay complexes in \S \ref{section:cohenmac}
that if $M^n$ is a combinatorial triangulation of an $n$-manifold that is $(d-1)$-connected for some $d \leq n$, then $M^n$
has the sphere local injectivity property up to dimension $d$ and disc local injectivity property 
up to dimension $d-1$.  See Example \ref{example:manifoldcm} and Lemma \ref{lemma:makeinjectivecm}.
\end{example}

\begin{remark}
\label{remark:ADD1}
By the simplicial approximation theorem, any continuous map $f\colon \bbS^k \rightarrow X$ is
homotopic to a map that is simplicial with respect to a triangulation of $\bbS^k$ that
can be obtained by subdividing any given triangulation of $\bbS^k$.  Since the class
of combinatorial triangulations of a manifold is closed under subdivisions, by starting with
a combinatorial triangulation of $\bbS^k$ (for instance, the boundary of a $(k+1)$-simplex)
we can ensure that the resulting map $\bbS^k \rightarrow X$ is simplicial with respect to
a combinatorial triangulation of $\bbS^k$.  Thus the local injectivity part of the sphere
local injectivity property is the key content of that definition.  A similar remark applies
to the disc local injectivity property, though here you need Zeeman's version \cite{ZeemanRelative} of the
relative simplicial approximation theorem.\footnote{There are subtle issues with relative
simplicial approximation, and the standard version as found in e.g.\ \cite[Theorem 3.4.8]{Spanier}
is not strong enough for what we are doing.  See the introduction to \cite{ZeemanRelative}
for a discussion of this.}
\end{remark}

We have the following.

\begin{lemma}
\label{lemma:localinjcon}
Let $X$ be a simplicial complex that has both the sphere and disc local injectivity property
up to dimension $d$.  Then $X$ is $d$-connected.
\end{lemma}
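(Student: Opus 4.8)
The plan is to verify the defining condition for $d$-connectivity directly: given any $k \leq d$ and any continuous map $f\colon \bbS^k \to X$, we must extend it over $\bbD^{k+1}$. The two hypotheses are designed to be applied in sequence. First I would apply the sphere local injectivity property up to dimension $d$: since $k \leq d$, the map $f$ is homotopic to a map $f'\colon \bbS^k \to X$ that is simplicial and locally injective with respect to some combinatorial triangulation $T$ of $\bbS^k$. Extending $f$ over $\bbD^{k+1}$ is equivalent to extending $f'$ over $\bbD^{k+1}$ (an extension of one gives an extension of the other by gluing on the homotopy, which occupies a collar $\bbS^k \times [0,1]$ of the boundary). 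So it suffices to extend $f'$.

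Now I would invoke the disc local injectivity property up to dimension $d$. Its hypothesis is precisely that we have a map $\bbS^k \to X$ that is simplicial and locally injective for some combinatorial triangulation of $\bbS^k$ — which is exactly the situation we have arranged for $f'$ with the triangulation $T$. Since $k \leq d$, the disc local injectivity property then supplies a combinatorial triangulation of $\bbD^{k+1}$ extending $T$ together with a (locally injective, simplicial) map $F\colon \bbD^{k+1} \to X$ extending $f'$. This is the required extension, so $X$ is $d$-connected.

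I should also handle the edge cases of the connectivity convention, though they require essentially no work. For $k \leq -2$ both $\bbS^k$ and $\bbD^{k+1}$ are empty and the extension condition is vacuous; for $k = -1$, $d$-connectivity with $d \geq -1$ asserts $X \neq \emptyset$, and this follows since the disc local injectivity property up to dimension $d \geq -1$ already forces $X$ to be nonempty (as recorded in Example \ref{example:edgecases}); and for $k = 0$ with $d \geq 0$ one uses that the two properties up to dimension $0$ together force $X$ to be nonempty and connected (again compare Example \ref{example:edgecases}). For $0 \leq k \leq d$ the main argument above applies verbatim.

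There is no real obstacle here — the lemma is essentially a bookkeeping statement that splits the task ``extend a sphere over a disc'' into ``first make the sphere simplicial and locally injective'' (sphere local injectivity) and then ``extend such a sphere over a disc'' (disc local injectivity), with the only mild point being the observation that extending $f$ and extending its simplicial-locally-injective homotopy representative $f'$ are equivalent, via a boundary collar. The hypotheses have been tailored so that the output format of the first property matches the input format of the second.
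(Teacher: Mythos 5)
Your proof is correct and follows the same two-step approach as the paper's: apply the sphere local injectivity property to homotope $f$ to a locally injective simplicial representative, then apply the disc local injectivity property to extend over $\bbD^{k+1}$. The collar argument justifying that extending $f'$ suffices, and the explicit edge cases for $k<0$, are small details the paper leaves implicit; including them is fine but the underlying argument is identical.
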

\begin{proof}
For $k \leq d$, first use the sphere local injectivity property to homotope a given
$f\colon \bbS^k \rightarrow X$ to a locally injective map, and then use the disc local injectivity
property to extend $f$ over $\bbD^{k+1}$.
\end{proof}

\subsection{Joins}
The sphere and disc local injectivity properties behave well under joins.  We will need this for the
disc local injectivity property, so we focus on that:

\begin{lemma}
\label{lemma:discjoin}
Let $X$ be a simplicial complex with the disc local injectivity property up to dimension $d$ and $Y$
be a simplicial complex with the disc local injectivity property up to dimension $e$.  Then $X \ast Y$
has the disc local injectivity property up to dimension $d+e+2$.
\end{lemma}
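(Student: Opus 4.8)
The plan is to prove this by a double induction on $d$ and $e$, using the edge cases recorded in Example \ref{example:edgecases} as the base of the induction, together with the standard fact that a combinatorial triangulation of $\bbS^{k+1}$ can be cut along an equatorial $\bbS^k$ into two combinatorial triangulations of $\bbD^{k+1}$, and dually a join of two discs (or a disc with a sphere) is again a disc or sphere. First I would dispose of the degenerate cases where $d = -2$ or $e = -2$: if $d = -2$ then $d+e+2 = e$, and since the disc local injectivity property is only a nontrivial condition in dimensions $\geq -1$, having it ``up to dimension $d = -2$'' is vacuous; but in that range we also need $X \neq \emptyset$ only when $d+e+2 \geq -1$, and one checks directly that the hypothesis forces $X$ nonempty exactly when needed. (Similarly for $e = -2$.) So assume $d, e \geq -1$.

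The main inductive step goes as follows. Suppose we are given $f\colon \bbS^m \rightarrow X \ast Y$ simplicial and locally injective for a combinatorial triangulation $T$ of $\bbS^m$, where $m \leq d+e+2$. Each simplex $\tau$ of $T$ maps to a simplex $f(\tau)$ of $X \ast Y$, which decomposes canonically as $f(\tau) = f(\tau)_X \ast f(\tau)_Y$ with $f(\tau)_X$ a simplex of $X$ (possibly empty) and $f(\tau)_Y$ a simplex of $Y$ (possibly empty). Correspondingly each simplex $\tau$ of $T$ has a face $\tau_X = f^{-1}(f(\tau)_X) \cap \tau$ mapping into $X$ and a complementary face $\tau_Y$ mapping into $Y$, with $\tau = \tau_X \ast \tau_Y$ by local injectivity. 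The subcomplex $T_X \subseteq T$ of simplices mapping entirely into $X$, and the subcomplex $T_Y$ of simplices mapping entirely into $Y$, are disjoint, and the ``join structure'' on $f$ exhibits $T$ as sitting inside a combinatorial structure where $T_X$ and $T_Y$ play the role of the two poles. The key geometric point is that $\bbS^m$ admits a combinatorial triangulation refining $T$ of the form $\partial(\Delta^a \ast \Delta^b)$-style decomposition, i.e.\ we can find combinatorially triangulated discs $D_X$ and $D_Y$ with $\partial D_X \ast \partial D_Y = \bbS^m$, and compatible maps. More honestly: I would restrict $f$ to $|T_X|$ and to $|T_Y|$; these need not be spheres, but after the cut-and-paste one reduces to extending a locally injective map of a sphere of dimension $\leq d$ into $X$ (handled by the disc property for $X$ up to dimension $d$) joined with one of dimension $\leq e$ into $Y$ (handled by $Y$ up to dimension $e$), and the join of the two extending discs is a combinatorially triangulated disc of dimension $\leq d+e+3$ extending $f$ over $\bbD^{m+1}$.

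The hard part will be the bookkeeping in making the ``join splitting'' of the triangulated sphere $\bbS^m$ precise and combinatorial — that is, showing that after possibly subdividing we may assume $T = A \ast B$ for combinatorially triangulated spheres (or one sphere and one disc) $A, B$ with $f|_A$ landing in $X$ and $f|_B$ landing in $Y$, with $\dim A \leq d$ and $\dim B \leq e$ (or the appropriate inequality when one factor is the empty complex, which corresponds to $f$ already landing entirely in $X$ or entirely in $Y$). One has to verify that the relevant subdivisions stay combinatorial (using the remark that subdivisions of combinatorial triangulations are combinatorial), that the dimension count $\dim A + \dim B + 1 = m \leq d+e+2$ forces $\dim A \leq d$ or we can rebalance, and — most delicately — handle the boundary cases where $A$ or $B$ is a disc rather than a sphere, since then ``extend over a disc'' for that factor is really ``the disc is already given'' and one instead invokes local injectivity of the join directly. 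I expect that once the correct combinatorial statement about splitting triangulated spheres as joins is isolated and proved (likely as a short sublemma), the application of the two hypothesized disc local injectivity properties and the fact that joins of combinatorial triangulations of discs/spheres are combinatorial triangulations of discs/spheres will be essentially formal.
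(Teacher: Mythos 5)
Your plan hinges on the claim that, after subdivision and ``cut-and-paste,'' the given combinatorial triangulation $T$ of $\bbS^m$ can be assumed to split as a join $A \ast B$ with $f(A) \subset X$ and $f(B) \subset Y$. This is not just delicate bookkeeping---it is false in general, and it is also not a move you are allowed to make: the disc local injectivity property requires extending $f$ over $\bbD^{m+1}$ relative to the \emph{given} boundary triangulation, so you may not subdivide $\bbS^m$ (and subdividing the domain is in any case incompatible with local injectivity, since a subdivided simplex maps onto a single target simplex). For a concrete obstruction, let $X$ be a $1$-simplex with vertices $x_0,x_1$ and $Y$ a single vertex $y$ (so $d=0$, $e=-1$, $d+e+2=1$), and let $f\colon \bbS^1 \to X \ast Y$ be the locally injective map sending a hexagon to $x_0,x_1,y,x_0,x_1,y$ in cyclic order. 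Here $f^{-1}(X)$ is two disjoint edges and $f^{-1}(Y)$ is two vertices; the hexagon is not a join of subcomplexes of these (a vertex mapping to $y$ fails to be adjacent to some vertex mapping into $X$), and no subdivision can make it one. Your ``rebalancing'' of the dimension count $\dim A + \dim B + 1 = m$ is likewise unsubstantiated: nothing forces the $X$-part of the domain to have dimension at most $d$, and arranging this is genuine work, not arithmetic.

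The paper's proof avoids any splitting of the domain. First, if some simplex $\sigma$ of $\bbS^k$ with $f(\sigma)\subset X$ has $\dim(\sigma) \geq d+1$, then $f(\Link_{\bbS^k}(\sigma)) \subset Y$ and $\dim \Link_{\bbS^k}(\sigma) \leq e$, so $Y$'s disc property lets one replace the star $\sigma \ast \Link_{\bbS^k}(\sigma)$ by $\partial\sigma \ast \bbD^{k-\dim(\sigma)}$; iterating forces $Z := f^{-1}(X)$ to have dimension at most $d$. (This surgery, not a subdivision, is the correct version of your ``rebalance,'' and reducing to the new $f$ is legitimate because the old and new discs cobound $\sigma \ast \bbD^{k-\dim(\sigma)}$, over which the map extends as a join.) Second, one cones $Z$ into $X$: choosing $F(p_0)$ arbitrarily and extending over $\sigma \ast p_0$ simplex by simplex using $X$'s disc property up to dimension $d$---note $Z$ is just a complex of dimension $\leq d$, not a sphere, so at no point does one need a spherical $X$-part. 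Third, every remaining simplex of $\Cone(\bbS^k)$ has the form $\sigma \ast \tau$ with $\sigma \subset \Cone(Z)$ mapping to $X$ and $\tau$ mapping to $Y$, so $F$ extends using the join structure of the \emph{target} $X \ast Y$, not of the domain. No induction on $d$ and $e$ is needed. As written, your proposal would need to be reorganized along these lines; the join-splitting sublemma you propose to isolate cannot be proved.
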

\begin{proof}
For some $k \leq d+e+2$, fix a combinatorial triangulation of $\bbS^k$ and let $f\colon \bbS^k \rightarrow X \ast Y$
be a locally injective simplicial map.  Our goal is to extend $f$ to a locally injective map
$F\colon \bbD^{k+1} \rightarrow X \ast Y$ for some combinatorial triangulation of $\bbD^{k+1}$ agreeing
with our triangulation of $\bbS^k$ on the boundary.  We divide the proof of this into three steps.

\begin{stepsa}
We modify $f$ such that $\dim(\sigma) \leq d$ for all simplices $\sigma$ of $\bbS^k$ with $f(\sigma) \subset X$.
\end{stepsa}

Consider a simplex $\sigma$ of $\bbS^k$ with $f(\sigma) \subset X$.  Pick $\sigma$ such that its dimension is maximal
with this property, and assume that $\dim(\sigma) \geq d+1$.  Then $\Link_{\bbS^k}(\sigma)$ is a combinatorial
triangulation of a $(k-\dim(\sigma)-1)$ sphere, and by the maximality of the dimension of $\sigma$ we have
$f(\Link_{\bbS^k}(\sigma)) \subset Y$.  Note that
\[k-\dim(\sigma)-1 \leq (d+e+2) - (d+1) -1 = e.\]
Since $Y$ has the disc local injectivity property up to dimension $e$ and $f$ is locally injective, we can
find a combinatorial triangulation of $\bbD^{k-\dim(\sigma)}$ agreeing with that of $\Link_{\bbS^k}(\sigma) \cong \bbS^{k-\dim(\sigma)-1}$ on
$\partial \bbD^{k-\dim(\sigma)}$ and a locally injective map $G\colon \bbD^{k-\dim(\sigma)} \rightarrow Y$
extending the restriction of $f$ to $\Link_{\bbS^k}(\sigma)$.  We have
\[\sigma \ast \Link_{\bbS^k}\left(\sigma\right) \cong \bbD^{\dim(\sigma)} \ast \bbS^{k-\dim(\sigma)-1} \cong \bbD^k\]
and
\[\partial \sigma \ast \bbD^{k-\dim(\sigma)} \cong \bbS^{\dim(\sigma)-1} \ast \bbD^{k-\dim(\sigma)} \cong \bbD^k,\]
and also
\[\partial\left(\sigma \ast \Link_{\bbS^k}\left(\sigma\right)\right) = \partial \sigma \ast \Link_{\bbS^k}\left(\sigma\right)\]
and
\[\partial\left(\partial \sigma \ast \bbD^{k-\dim(\sigma)}\right) = \partial \sigma \ast \partial \bbD^{k-\dim(\sigma)} = \partial \sigma \ast \Link_{\bbS^k}(\sigma).\]
In all of these $=$ means equality of simplicial complexes and $\cong$ means homeomorphism.  It follows that we can homotope
$f$ so as to replace 
\[f|_{\sigma \ast \Link_{\bbS^k}(\sigma)}\colon \sigma \ast \Link_{\bbS^k}(\sigma) \rightarrow X \ast Y \quad \text{with} \quad \left(f|_{\partial \sigma}\right) \ast G \colon \partial \sigma \ast \bbD^{k-\dim(\sigma)} \rightarrow X \ast Y.\]
This eliminates $\sigma$, and it is easy to see that it is enough to prove the lemma for this new $f$.
Repeating this over and over, we can ensure that $\dim(\sigma) \leq d$ for all simplices $\sigma$ of $\bbS^k$ with $f(\sigma) \subset X$.

\begin{stepsa}
For a topological space $W$, let $\Cone(W)$ denote the cone on $W$.  
Set $Z = f^{-1}(X) \subset \bbS^k$.  We construct a triangulation of $\Cone(Z)$ and a locally injective
simplicial map $F\colon \Cone(Z) \rightarrow X$ extending $f|_Z$.
\end{stepsa}

Let $p_0$ be the cone point of $\Cone(Z)$, and define $F(p_0)$ to be some arbitrary vertex of $X$.  By the previous
step, $Z$ has dimension at most $d$.  We can now use the disc local injectivity property of $X$ up to dimension $d$
to extend $F$ over $\sigma \ast p_0$ for each simplex $\sigma$ of $Z$: first over the $0$-simplices, then the
$1$-simplices, etc.  At each step, we have already defined $F$ on a subdivision of
\[\sigma \cup_{\partial \sigma} \left(\partial \sigma \ast p_0\right) \cong \bbD^{\dim(\sigma)} \cup_{\bbS^{\dim(\sigma)-1}} \left(\bbS^{\dim(\sigma)-1} \ast p_0\right) \cong \bbS^{\dim(\sigma)},\]
and we use the disc local injectivity property to extend $F$ to a locally injective map on the interior of this simplex (after a
further subdivision).

\begin{stepsa}
We extend $F$ to the rest of $\bbD^{k+1}$.
\end{stepsa}

We have defined $F$ on a subdivision of $\Cone(Z) \subset \Cone(\bbS^k)$, and $F$ takes $\Cone(Z)$ to $X$.  Subdivide
$\Cone(\bbS^k)$ by subdividing exactly the same simplices subdivided to form our subdivision of $\Cone(Z)$ on
which $F$ was defined.  The simplices of this subdivision are thus all of the form $\sigma \ast \tau$, where
$\sigma$ is a simplex of $\Cone(Z)$ with $F(\sigma) \subset X$ and $\tau$ is a simplex of $\bbS^k$ with $f(\tau) \subset Y$.
From this, we see that $F$ can be extended over all these simplices to a map with values in $X \ast Y$, as desired.
\end{proof}

\section{Cohen--Macaulay complexes}
\label{section:cohenmac}

We now discuss an important class of simplicial complexes.

\subsection{Weakly Cohen--Macaulay complexes}
We start with the following definition:

\begin{definition}
\label{definition:cm}
Let $X$ be a simplicial complex and let $d \geq 0$.
We say that $X$ is {\em weakly Cohen--Macaulay} of dimension $d$ if $X$ is $(d-1)$-connected,
and for all simplices $\sigma$ of $X$ the subcomplex $\Link_X(\sigma)$ is 
$(d-\dim(\sigma)-2)$-connected.\footnote{If we considered the empty set to be a $(-1)$-dimensional
simplex whose link is the whole complex $X$, then we could combine the two hypotheses and just
say that for all simplices $\sigma$ of $X$ the subcomplex $\Link_X(\sigma)$ is 
$(d-\dim(\sigma)-2)$-connected.}
If in addition to this $X$ is $d$-dimensional and for all simplices $\sigma$ of $X$ the subcomplex
$\Link_X(\sigma)$ is $(d-\dim(\sigma)-1)$-dimensional, then we say that $X$ is {\em Cohen--Macaulay} of dimension $d$.
\end{definition}

\begin{remark}
If $X$ is weakly Cohen--Macaulay of dimension $d$ and $\sigma$ is a simplex of $X$, then
$\Link_X(\sigma)$ is weakly Cohen--Macaulay of dimension $(d-\dim(\sigma)-1)$.  A similar
remark applies if $X$ is Cohen--Macaulay of dimension $d$.
\end{remark}

\begin{example}
\label{example:manifoldcm}
Let $M^n$ be a combinatorial triangulation of an $n$-dimensional manifold that is $(d-1)$-connected for some
$d \leq n$.  Then $M^n$ is weakly Cohen--Macaulay of dimension $d$.  Indeed, it is $(d-1)$-connected
by assumption, and for a simplex $\sigma$ of $M^n$ we have that $\Link_{M^n}(\sigma)$ is a
combinatorial triangulation of an $(n-\dim(\sigma)-1)$-sphere, and in particular is
\[n-\dim(\sigma)-2 \geq d-\dim(\sigma) -2\]
connected.
\end{example}

\subsection{Cohen--Macaulay and local injectivity}
Weakly Cohen--Macaulay complexes provide examples of the sphere and disc local injectivity property:

\begin{lemma}\hspace{-5pt}\footnote{See Galatius--Randal-Williams \cite[Theorem 2.4]{GRW} for a similar result.}
\label{lemma:makeinjectivecm}
Let $X$ be a simplicial complex that is weakly Cohen--Macaulay of dimension $d$.  Then
$X$ has the sphere local injectivity property up to dimension $d$ and the disc local injectivity
property up to dimension $(d-1)$.
\end{lemma}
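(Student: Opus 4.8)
The plan is to prove both statements by downward induction on the dimension, using the weak Cohen--Macaulay hypothesis to control links. For the sphere local injectivity property, suppose we are given a continuous map $f\colon \bbS^k \rightarrow X$ for some $k \leq d$. By Remark \ref{remark:ADD1} (simplicial approximation starting from a combinatorial triangulation), we may assume $f$ is simplicial with respect to some combinatorial triangulation of $\bbS^k$. We then want to remove ``collapses'', i.e. simplices $\sigma$ of $\bbS^k$ with $\dim(\sigma) > \dim(f(\sigma))$. Choosing such a $\sigma$ of maximal dimension among those that collapse, the link $\Link_{\bbS^k}(\sigma)$ is a combinatorial triangulation of a $(k - \dim(\sigma) - 1)$-sphere which $f$ maps into $\Link_X(f(\sigma))$. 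Since $X$ is weakly Cohen--Macaulay of dimension $d$, the complex $\Link_X(f(\sigma))$ is $(d - \dim(f(\sigma)) - 2)$-connected, and a dimension count using $\dim(\sigma) > \dim(f(\sigma))$ and $k \leq d$ shows $k - \dim(\sigma) - 1 \leq d - \dim(f(\sigma)) - 2$, so $f|_{\Link}$ extends over a disc $\bbD^{k - \dim(\sigma)}$ mapping into $\Link_X(f(\sigma))$. Replacing the star $\sigma \ast \Link_{\bbS^k}(\sigma)$ by $\partial\sigma \ast \bbD^{k-\dim(\sigma)}$ (both are $\bbD^k$ with the same boundary, exactly as in the proof of Lemma \ref{lemma:discjoin}) homotopes $f$ and reduces the number of top-dimensional collapsed simplices. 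Iterating, we reach a simplicial $f$ with no collapses, i.e. a locally injective one.

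For the disc local injectivity property up to dimension $d-1$, I would argue similarly but coning off. Given $k \leq d-1$ and a locally injective simplicial $f\colon \bbS^k \rightarrow X$, since $X$ is $(d-1)$-connected and $k \leq d-1$, the map $f$ extends to a continuous $F\colon \bbD^{k+1} \rightarrow X$; by Zeeman's relative simplicial approximation theorem (Remark \ref{remark:ADD1}) we may assume $F$ is simplicial for a combinatorial triangulation of $\bbD^{k+1}$ restricting to our triangulation of $\bbS^k$. Now we must fix up the interior collapses of $F$ without disturbing the boundary. Again pick an interior simplex $\sigma$ with $\dim(\sigma) > \dim(F(\sigma))$ of maximal dimension; if $\sigma \not\subset \bbS^k$ its link in $\bbD^{k+1}$ is a combinatorial triangulation of either an $(k - \dim(\sigma))$-sphere (if $\sigma$ is interior) or disc (if $\sigma$ touches the boundary), which $F$ sends into $\Link_X(F(\sigma))$, and the same numerical inequality together with the connectivity of $\Link_X(F(\sigma))$ lets us fill. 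The subtlety relative to the sphere case is the two-sidedness of the link for interior simplices and the bookkeeping to keep the boundary triangulation untouched, so that the final $F$ genuinely extends the given $f$.

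The main obstacle I anticipate is the careful local surgery and the inductive accounting: one must verify that replacing the star of a maximal collapsed simplex by the alternate ball $\partial\sigma \ast (\text{disc})$ really does strictly decrease a well-chosen complexity (e.g. the number of simplices collapsing by the maximal amount), that the new triangulation is still combinatorial (subdivisions of combinatorial triangulations are combinatorial, which is stated in \S\ref{section:prelim}), and — for the disc statement — that the surgery never alters the prescribed boundary sphere. The connectivity and dimension inputs are exactly packaged by the weak Cohen--Macaulay definition, so the homotopy-theoretic content is routine once the combinatorics is set up; the delicate part is purely the simplicial-topology bookkeeping, essentially a relative, iterated version of the argument already carried out in the proof of Lemma \ref{lemma:discjoin}.

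This is also the place where one honestly invokes that the links appearing are combinatorial triangulations of spheres or discs (from \S\ref{section:prelim}) so that the extension/filling steps take place inside genuinely combinatorial objects, allowing the induction on dimension of $X$ — or rather on $d$ — to close: the base cases $d \leq 0$ are covered by Example \ref{example:edgecases}.
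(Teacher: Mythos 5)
Your outline follows the paper's strategy (simplicial approximation, then surgery on a maximal-dimension collapsed simplex using links, with the disc case done rel boundary after extending by $(d-1)$-connectivity), but it omits the one ingredient that makes the surgery work: the filling of $f|_{\Link_{\bbS^k}(\sigma)}$ inside $\Link_X(f(\sigma))$ must itself be \emph{locally injective}. You announce an induction on $d$ with base case from Example \ref{example:edgecases}, but in the body of the argument you only ever use that $\Link_X(f(\sigma))$ is $(d-\dim(f(\sigma))-2)$-connected, i.e.\ you never invoke the inductive hypothesis. The paper's proof instead observes that $\Link_X(f(\sigma))$ is weakly Cohen--Macaulay of dimension $d-\dim(f(\sigma))-1$, hence by induction has the \emph{disc} local injectivity property up to dimension $d-\dim(f(\sigma))-2$, and that the maximality of $\sigma$ makes $f|_{\Link_{\bbS^k}(\sigma)}$ locally injective, so this property applies and the extension $G\colon \bbD^{k-\dim(\sigma)}\rightarrow \Link_X(f(\sigma))$ can be chosen locally injective. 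This is not a cosmetic refinement: with $G$ locally injective and landing in the link of $f(\sigma)$, a new simplex $\tau\ast\eta$ ($\tau\subsetneq\sigma$, $\eta\subset\bbD^{k-\dim(\sigma)}$) can only be collapsed because $\tau$ already was, which is what keeps the bookkeeping under control and, in the disc step, is what guarantees the final map is locally injective.

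With only the connectivity of the link, your claimed progress measure fails. A merely simplicial filling $G$ may collapse simplices of the filling disc (it could even be constant if $\Link_X(f(\sigma))$ happens to be a cone), so the surgery can create collapsed simplices of dimension up to $k-\dim(\sigma)$ inside the new ball $\partial\sigma\ast\bbD^{k-\dim(\sigma)}$ that have nothing to do with faces of $\sigma$; in particular the number of top-dimensional collapsed simplices can increase, the iteration need not terminate, and in the disc case there is no reason the process ends at a locally injective extension of $f$. So the missing idea is precisely the inductive use of the disc local injectivity property of links (this is also why the lemma's two assertions must be proved together, in the order sphere-then-disc within a single induction on $d$). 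A smaller point: in the disc step your worry about links of simplices meeting $\partial\bbD^{k+1}$ is unnecessary under the paper's conventions --- since $f|_{\bbS^k}$ is locally injective, the chosen collapsed simplex is never contained in the boundary, so its link is a combinatorial sphere and the surgery leaves the boundary triangulation and the map on it untouched, exactly as in the paper's ``identical argument'' for the disc case.
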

\begin{proof}
The proof will be by induction on $d$.  The base case $d=0$ is trivial,\footnote{See Remark \ref{example:edgecases}, and
note that a simplicial complex $X$ that is weakly Cohen--Macaulay of dimension $0$ is $(-1)$-connected, i.e., nonempty.} so assume that $d>0$ and that
the lemma is true whenever $d$ is smaller.  We prove the two parts of the lemma separately.

\begin{stepsb}
The simplicial complex $X$ has the sphere local injectivity property up to dimension $d$.
\end{stepsb}

For some $k \leq d$, let $f\colon \bbS^k \rightarrow X$ be a continuous map.  Using simplicial
approximation (see Remark \ref{remark:ADD1}), 
we can assume that $f$ is simplicial for some combinatorial triangulation of
$\bbS^k$.  We will prove that $f$ can be homotoped to a locally injective simplicial map.

Assume that $f$ is not locally injective.
Consider a simplex $\sigma$ of $\bbS^k$ with $\dim(f(\sigma)) < \dim(\sigma)$.  Pick $\sigma$ such that its dimension is maximal
with this property.  Then $\Link_{\bbS^k}(\sigma)$ is a combinatorial
triangulation of a $(k-\dim(\sigma)-1)$ sphere, and by the maximality of the dimension of $\sigma$ we have
$f(\Link_{\bbS^k}(\sigma)) \subset \Link_X(f(\sigma))$.  This maximality also implies that the restriction
of $f$ to $\Link_{\bbS^k}(\sigma)$ is locally injective.  Note that
\[k-\dim(\sigma)-1 \leq d - \dim(f(\sigma)) - 2.\]
Since $X$ is weakly Cohen--Macaulay of dimension $d$, it follows that $\Link_X(f(\sigma))$ is weakly
Cohen--Macaulay of dimension $d-\dim(f(\sigma))-1$.  In particular, by our inductive hypothesis
it has the disc local injectivity property up to dimension $d-\dim(f(\sigma))-2$.  We can thus
find a combinatorial triangulation of $\bbD^{k-\dim(\sigma)}$ agreeing with that of $\Link_{\bbS^k}(\sigma) \cong \bbS^{k-\dim(\sigma)-1}$ on
$\partial \bbD^{k-\dim(\sigma)}$ and a locally injective map $G\colon \bbD^{k-\dim(\sigma)} \rightarrow X$
extending the restriction of $f$ to $\Link_{\bbS^k}(\sigma)$.  We have
\[\sigma \ast \Link_{\bbS^k}\left(\sigma\right) \cong \bbD^{\dim(\sigma)} \ast \bbS^{k-\dim(\sigma)-1} \cong \bbD^k\]
and
\[\partial \sigma \ast \bbD^{k-\dim(\sigma)} \cong \bbS^{\dim(\sigma)-1} \ast \bbD^{k-\dim(\sigma)} \cong \bbD^k,\]
and also
\[\partial\left(\sigma \ast \Link_{\bbS^k}\left(\sigma\right)\right) = \partial \sigma \ast \Link_{\bbS^k}\left(\sigma\right)\]
and
\[\partial\left(\partial \sigma \ast \bbD^{k-\dim(\sigma)}\right) = \partial \sigma \ast \partial \bbD^{k-\dim(\sigma)} = \partial \sigma \ast \Link_{\bbS^k}(\sigma).\]
In all of these $=$ means equality of simplicial complexes and $\cong$ means homeomorphism.  It follows that we can homotope
$f$ so as to replace
\[f|_{\sigma \ast \Link_{\bbS^k}(\sigma)} \colon \sigma \ast \Link_{\bbS^k}(\sigma) \rightarrow X\]
with
\[\left(f|_{\partial \sigma}\right) \ast G \colon \partial \sigma \ast \bbD^{k-\dim(\sigma)} \rightarrow f(\sigma) \ast \Link_X(f(\sigma)) \subset X.\]
This eliminates $\sigma$. 
Repeating this over and over, we can ensure that $f$ is locally injective, as desired.

\begin{stepsb}
The simplicial complex $X$ has the disc local injectivity property up to dimension $d-1$.
\end{stepsb}

For some $k \leq d-1$, let $f\colon \bbS^k \rightarrow X$ be a map that is locally injective with respect
to some combinatorial triangulation of $\bbS^k$.  Since $X$ is weakly Cohen--Macaulay of dimension $d$, it
is $(d-1)$-connected.  It follows (see Remark \ref{remark:ADD1}) 
that we can extend $f$ to a map $F\colon \bbD^{k+1} \rightarrow X$ that
is simplicial with respect to a combinatorial triangulation of $\bbD^{k+1}$.  Using an argument identical
to the previous step, we can modify $F$ without changing it on the boundary to ensure that it is locally
injective, as desired.
\end{proof}

\section{Bad simplex arguments}
\label{section:badsimplex}

Let $Y$ be a simplicial complex and let $X \subset Y$ be a subcomplex.  In this section, we discuss
two results that let us relate the topological properties of $X$ and $Y$ by studying a set
of ``bad simplices'' that characterize simplices that are in $Y$ but not in $X$.

\subsection{Connectivity and bad simplices}
The first of the two results that we will need is due to Hatcher--Vogtmann \cite{HatcherVogtmannTethers}.  To
state it, we must first give a definition.

\begin{definition}
\label{definition:badlink}
Let $Y$ be a simplicial complex and let $\cB$ be a set of simplices of $Y$.  For $\sigma \in \cB$,
let $\cL(\sigma,\cB)$ be the subcomplex of $Y$ consisting of simplices $\tau$ satisfying the following
two conditions:
\begin{itemize}
\item $\tau$ is in $\Link_Y(\sigma)$, so $\sigma \ast \tau$ is a simplex of $Y$.
\item If $\sigma'$ is a face of $\sigma \ast \tau$ with $\sigma' \in \cB$, then $\sigma'$
is a face of $\sigma$.\qedhere
\end{itemize}
\end{definition}

We can now state Hatcher--Vogtmann's theorem as follows.  In it, you should regard $\cB$ as
the set of ``bad simplices'' of $Y$:

\begin{proposition}[{\cite[Proposition 2.1]{HatcherVogtmannTethers}}]
\label{proposition:avoidbadeasy}
Let $Y$ be a simplicial complex and let $X \subset Y$ be a subcomplex.  Assume that there
exists a set $\cB$ of simplices of $Y$ with the following properties for some $d \geq 0$:
\begin{itemize}
\item[(i)] A simplex of $Y$ lies in $X$ if and only if none of its faces are in $\cB$.  In particular,
since simplices are faces of themselves no simplices of $\cB$ lie in $X$. 
\item[(ii)] If $\sigma,\sigma' \in \cB$ are such that $\sigma \cup \sigma'$ is a simplex
of $Y$, then $\sigma \cup \sigma' \in \cB$.  Note that $\sigma$ and $\sigma'$ might share vertices,
so $\sigma \cup \sigma'$ might not be $\sigma \ast \sigma'$.
\item[(iii)] For all $\sigma \in \cB$, the subcomplex $\cL(\sigma,\cB)$ is $(d-\dim(\sigma)-1)$-connected.
\end{itemize}
Then the pair $(Y,X)$ is $d$-connected, i.e., we have $\pi_k(Y,X)=0$ for $0 \leq k \leq d$.
\end{proposition}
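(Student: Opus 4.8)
The plan is to unwind the conclusion. Saying $\pi_k(Y,X)=0$ for $0\le k\le d$ means every map of pairs $f\colon(\bbD^k,\partial\bbD^k)\to(Y,X)$ can be homotoped, rel $\partial\bbD^k$, into $X$. By simplicial approximation (Remark \ref{remark:ADD1}), starting from a combinatorial triangulation of $\bbD^k$ I may assume $f$ is simplicial. Call a simplex $\tau$ of $\bbD^k$ \emph{bad} if $f|_{\tau}$ is injective and $f(\tau)\in\cB$. By (i) no bad simplex lies in $\partial\bbD^k$, and a short argument with (i) shows that if $\bbD^k$ has no bad simplices at all then $f(\bbD^k)\subset X$ and we are done. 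So it suffices to produce a homotopy of $f$ rel $\partial\bbD^k$ that strictly decreases the pair $\bigl(D,\ \#\{\text{bad }\tau:\dim\tau=D\}\bigr)$ in lexicographic order, where $D$ is the maximal dimension of a bad simplex; iterating then finishes the proof.

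To decrease this, choose a bad simplex $\tau$ of maximal dimension $p=D$ and set $\sigma:=f(\tau)\in\cB$, a $p$-simplex of $Y$; equivalently, $p$ is the largest dimension of a bad simplex appearing as a face of some $f(\mu)$. The key claim is that $f$ carries $\Link_{\bbD^k}(\tau)$ into $\cL(\sigma,\cB)$. Granting that $f(\tau')$ is disjoint from $\sigma$ for each $\tau'\in\Link_{\bbD^k}(\tau)$: then $\sigma\ast f(\tau')$ is a face of the image simplex $f(\tau\ast\tau')$, and if $\rho\le\sigma\ast f(\tau')$ is a bad face then $\sigma\cup\rho$ is also a simplex inside $f(\tau\ast\tau')$, hence lies in $\cB$ by (ii), hence $\sigma\cup\rho=\sigma$ by maximality of $\dim\sigma$, i.e.\ $\rho\le\sigma$. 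That is precisely the defining condition of $\cL(\sigma,\cB)$.

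Now $\Link_{\bbD^k}(\tau)$ is a combinatorial triangulation of $\bbS^{k-p-1}$ (as $\tau$ is interior), and $k-p-1\le d-p-1$, so by (iii) the map $f|_{\Link_{\bbD^k}(\tau)}\colon\bbS^{k-p-1}\to\cL(\sigma,\cB)$ is null-homotopic. Extending over the cone and invoking Zeeman's relative simplicial approximation yields a simplicial map $\bar f\colon\Cone(\Link_{\bbD^k}(\tau))\to\cL(\sigma,\cB)$, for a combinatorial triangulation restricting to the given one on $\Link_{\bbD^k}(\tau)$, with $\bar f=f$ there. Perform the \emph{star replacement}: excise the interior of the $k$-ball $\St_{\bbD^k}(\tau)=\tau\ast\Link_{\bbD^k}(\tau)$ and glue in the $k$-ball $\partial\tau\ast\Cone(\Link_{\bbD^k}(\tau))$ along the common boundary $\partial\tau\ast\Link_{\bbD^k}(\tau)$, defining the new map by $f$ on $\partial\tau$ and $\bar f$ on the cone. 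These agree on the overlap and take values in $Y$ since $\cL(\sigma,\cB)\subset\Link_Y(\sigma)$; moreover both the old and new maps of this $k$-ball land in the subcomplex $\overline{\sigma}\ast\cL(\sigma,\cB)$ of $Y$, which is contractible (a join with a simplex), so they are homotopic rel boundary and the whole modification is a homotopy of $f$ rel $\partial\bbD^k$. Since $\bar f$ lands in $\cL(\sigma,\cB)$, which is disjoint from $\sigma$, every bad simplex of the new complex lying inside $\partial\tau\ast\Cone(\Link_{\bbD^k}(\tau))$ maps injectively onto a proper face of $\sigma$, hence has dimension $<p$; the only removed bad simplex is $\tau$ itself (a bad simplex containing $\tau$ would have dimension $>p$); so the pair $\bigl(D,\#\{\dots\}\bigr)$ strictly decreases, as needed.

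The main obstacle is the disjointness assertion $f(\tau')\cap\sigma=\emptyset$ used in the key claim: it can fail exactly when the simplicial map $f$ identifies a vertex of $\tau$ with a vertex of $\Link_{\bbD^k}(\tau)$, and then the cone map $\bar f$ might reach vertices of $\sigma$ and recreate a $p$-dimensional bad simplex, defeating the count. Handling this requires a preliminary normalization of $f$---after further subdivision, homotoping $f$ rel $\partial\bbD^k$ so that no edge meeting a bad simplex is collapsed by $f$ (for instance via successive edge contractions in the domain, once the relevant link conditions are arranged)---together with the bookkeeping needed to keep this property through the star replacements. Verifying that $\bar f$ can be taken simplicial for a combinatorial triangulation, and that the modified complex is again a combinatorial triangulation of $\bbD^k$, is routine but must be done with care; the local-injectivity machinery of \S\ref{section:localinj}--\S\ref{section:cohenmac} is available if needed. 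I expect the remaining steps---the contractibility of $\overline{\sigma}\ast\cL(\sigma,\cB)$, the connectivity input (iii), and the termination argument---to go through essentially as sketched.
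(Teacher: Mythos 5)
Your plan is essentially the paper's argument (pick a maximal-dimensional ``bad'' simplex in the domain, map its link into $\cL(\sigma,\cB)$, cone off using (iii), and replace), and the disjointness concern you raise at the end is exactly the right thing to worry about. However, you have introduced the difficulty yourself by demanding that $f|_{\tau}$ be injective in your definition of a bad simplex of $\bbD^k$. Once you impose that, the maximality of $\dim(\tau)$ no longer forbids a vertex $v\in\Link_{\bbD^k}(\tau)$ from satisfying $f(v)\in f(\tau)$: the simplex $\tau\ast v$ would still have $f(\tau\ast v)=f(\tau)\in\cB$, but $f|_{\tau\ast v}$ is not injective, so it is not ``bad'' in your sense and does not contradict maximality. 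Your proposed repair (an auxiliary normalization by edge contractions after subdivision) is vague and is genuinely a gap --- there is no reason such a normalization is compatible with the subsequent star replacements, and nothing in the hypotheses of the proposition (where $Y$ is an arbitrary simplicial complex, not weakly Cohen--Macaulay) supplies the local-injectivity machinery you allude to.

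The paper's proof avoids this entirely: it selects $\sigma$ of maximal dimension among all simplices of $\bbD^k$ with $f(\sigma)\in\cB$, \emph{without} any injectivity requirement. Then the disjointness you need is automatic: if some $v\in\Link_{\bbD^k}(\sigma)$ had $f(v)$ a vertex of $f(\sigma)$, then $\sigma\ast v$ would be a strictly larger simplex of $\bbD^k$ with $f(\sigma\ast v)=f(\sigma)\in\cB$, contradicting maximality. Hence every $\tau\in\Link_{\bbD^k}(\sigma)$ has $f(\tau)$ disjoint from $f(\sigma)$, and the argument via (ii) that $f(\tau)\in\cL(f(\sigma),\cB)$ goes through cleanly. (Note $\dim(f(\sigma))$ may be less than $\dim(\sigma)$, which only helps: $\cL(f(\sigma),\cB)$ is then $(d-\dim(f(\sigma))-1)$-connected, which is at least $(d-\dim(\sigma)-1)$-connected, more than enough to cone off $\Link_{\bbD^k}(\sigma)\cong\bbS^{k-\dim(\sigma)-1}$.) The termination count is the same as yours, now with respect to simplices of $\bbD^k$ whose image lies in $\cB$, with no injectivity caveat. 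If you drop the injectivity requirement from your definition and rerun your argument, the obstacle you flagged disappears and the rest of your sketch matches the paper.
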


Both to make this paper more self-contained and to motivate the second and more technical bad
simplex argument we discuss below, we include a proof.

\begin{proof}[Proof of Proposition \ref{proposition:avoidbadeasy}]
For some $0 \leq k \leq d$, let $f\colon (\bbD^k,\partial \bbD^k) \rightarrow (Y,X)$ 
be a map of pairs that is simplicial with respect to some combinatorial triangulation
of $\bbD^k$.  Our goal is to homotope $f$ rel boundary such that its image lies in $X$.
So assume that the image of $f$ does not lie in $X$.

By (i), there exists a simplex $\sigma$ of $\bbD^k$ such that $f(\sigma) \in \cB$.  Pick 
$\sigma$ such that its dimension is maximal among all simplices with $f(\sigma) \in \cB$.  
Consider a simplex $\tau$ of $\Link_{\bbD^k}(\sigma)$.  We claim that $f(\tau) \in \cL(f(\sigma),\cB)$.
Indeed, the maximality of $\dim(\sigma)$ implies that $f(\tau)$ is in $\Link_Y(f(\sigma))$.  If
$f(\tau)$ does not lie in $\cL(f(\sigma),\cB)$, then $f(\sigma) \ast f(\tau)$ contains a face
$\eta$ with $\eta \in \cB$ and with $\eta$ containing vertices of $f(\tau)$.  But then
(ii) implies that $f(\sigma) \cup \eta \in \cB$, contradicting the maximality of
$\dim(\sigma)$.

Since $f$ takes $\partial \bbD^k$ to $X$, the simplex $\sigma$ does not lie in $\partial \bbD^k$.
It follows that $\Link_{\bbD^k}(\sigma)$ is a combinatorial
triangulation of a $(k-\dim(\sigma)-1)$ sphere, and by 
(iii) the complex $\cL(f(\sigma),\cB)$ is $(d-\dim(\sigma)-1)$-connected.  Since
\[k-\dim(\sigma)-1 \leq d-\dim(\sigma)-1,\]
we can find a combinatorial triangulation of $\bbD^{k-\dim(\sigma)}$ agreeing with that of 
$\Link_{\bbD^k}(\sigma) \cong \bbS^{k-\dim(\sigma)-1}$ on
$\partial \bbD^{k-\dim(\sigma)}$ and a simplicial map $g\colon \bbD^{k-\dim(\sigma)} \rightarrow \cL(f(\sigma),\cB)$
extending the restriction of $f$ to $\Link_{\bbD^k}(\sigma)$.  We have
\[\sigma \ast \Link_{\bbD^k}\left(\sigma\right) \cong \bbD^{\dim(\sigma)} \ast \bbS^{k-\dim(\sigma)-1} \cong \bbD^k\]
and
\[\partial \sigma \ast \bbD^{k-\dim(\sigma)} \cong \bbS^{\dim(\sigma)-1} \ast \bbD^{k-\dim(\sigma)} \cong \bbD^k,\]
and also
\[\partial\left(\sigma \ast \Link_{\bbD^k}\left(\sigma\right)\right) = \partial \sigma \ast \Link_{\bbD^k}\left(\sigma\right)\]
and
\[\partial\left(\partial \sigma \ast \bbD^{k-\dim(\sigma)}\right) = \partial \sigma \ast \partial \bbD^{k-\dim(\sigma)} = \partial \sigma \ast \Link_{\bbD^k}(\sigma).\]
In all of these $=$ means equality of simplicial complexes and $\cong$ means homeomorphism.  It follows that we can homotope
$f$ so as to replace
\[f|_{\sigma \ast \Link_{\bbD^k}(\sigma)}\colon \sigma \ast \Link_{\bbD^k}(\sigma) \rightarrow Y\]
with
\[\left(f|_{\partial \sigma}\right) \ast g \colon \partial \sigma \ast \bbD^{k-\dim(\sigma)} \rightarrow f(\sigma) \ast \cL(f(\sigma),\cB) \subset Y.\]
This eliminates $\sigma$, and by (ii) and the definition of $\cL(f(\sigma),\cB)$ it does not introduce
any new simplices of dimension at least $\dim(\sigma)$ mapping to simplices in $\cB$.
Repeating this over and over, we can ensure that there are no simplices $\sigma$ of $\bbD^k$
with $f(\sigma) \in \cB$, so by (i) we have $f(\bbD^k) \subset X$, as desired.
\end{proof}

\subsection{Disc local-injectivity and bad simplices}
We now discuss a more technical version of Proposition \ref{proposition:avoidbadeasy} for
proving the disc local injectivity property.

\begin{proposition}
\label{proposition:avoidbadhard}
Let $Y$ be a simplicial complex with the disc local injectivity property up to dimension $d$, and let $X \subset Y$ be a subcomplex.
Assume that there exists a set $\cB$ of simplices of $Y$ with the following properties:
\begin{itemize}
\item[(i)] A simplex of $Y$ lies in $X$ if and only if none of its faces are in $\cB$. 
In particular, since simplices are faces of themselves no simplices of $\cB$ lie in $X$.
\item[(ii)] If $\sigma,\sigma' \in \cB$ are such that $\sigma \cup \sigma'$ is a simplex
of $Y$, then $\sigma \cup \sigma' \in \cB$.  Note that $\sigma$ and $\sigma'$ might share vertices,
so $\sigma \cup \sigma'$ might not be $\sigma \ast \sigma'$.
\item[(iii)] For all $\sigma \in \cB$, there exists a subcomplex $\hcL(\sigma,\cB)$ of $Y$ with
$\cL(\sigma,\cB) \subset \hcL(\sigma,\cB)$ such that the following holds:
\begin{itemize}
\item[a.] We have $\partial \sigma \ast \hcL(\sigma,\cB) \subset Y$.
\item[b.] All simplices of $\partial \sigma \ast \hcL(\sigma,\cB)$ that are in $\cB$ lie in
$\partial \sigma$.
\item[c.] The complex $\hcL(\sigma,\cB)$ has the disc local injectivity property
up to dimension $(d-\dim(\sigma))$.
\end{itemize}
\end{itemize}
Then $X$ has the disc local injectivity property up to dimension $d$.
\end{proposition}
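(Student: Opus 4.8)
The strategy is to run the proof of Proposition~\ref{proposition:avoidbadeasy} again, this time upgrading ``connected'' to ``locally injective'' throughout; the reason hypothesis~(iii) is phrased with the enlargements $\hcL(\sigma,\cB) \supset \cL(\sigma,\cB)$ rather than with the $\cL(\sigma,\cB)$ themselves is exactly to make this upgrade possible. Fix $k \leq d$ and a simplicial map $f\colon \bbS^k \to X$ that is locally injective for some combinatorial triangulation of $\bbS^k$; we must produce a combinatorial triangulation of $\bbD^{k+1}$ restricting to it and a locally injective simplicial $F\colon \bbD^{k+1} \to X$ extending $f$. Since $Y$ has the disc local injectivity property up to dimension $d$ and $k \leq d$, we first extend $f$ to a locally injective simplicial map $F\colon \bbD^{k+1} \to Y$ for some combinatorial triangulation of $\bbD^{k+1}$ restricting to the given one on $\partial\bbD^{k+1} = \bbS^k$. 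By~(i) it then suffices to modify $F$, keeping it simplicial and locally injective and not changing it on $\partial\bbD^{k+1}$, until no simplex of $\bbD^{k+1}$ maps into $\cB$.

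For the surgery step, suppose some simplex of $\bbD^{k+1}$ maps into $\cB$ and pick $\sigma$ of maximal dimension with $\bar{\sigma} := F(\sigma) \in \cB$. Since $F(\partial\bbD^{k+1}) \subset X$ while no simplex of $\cB$ lies in $X$, the simplex $\sigma$ is interior, so $\Link_{\bbD^{k+1}}(\sigma)$ is a combinatorial triangulation of $\bbS^{k-\dim(\sigma)}$. The argument used in the proof of Proposition~\ref{proposition:avoidbadeasy} --- exploiting that $F$ is injective on each simplex (local injectivity), together with~(i),~(ii), and the maximality of $\dim(\sigma)$ --- shows that $F$ carries $\Link_{\bbD^{k+1}}(\sigma)$ locally injectively into $\cL(\bar{\sigma},\cB) \subset \hcL(\bar{\sigma},\cB)$. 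As $\dim \Link_{\bbD^{k+1}}(\sigma) = k-\dim(\sigma) \leq d-\dim(\bar{\sigma})$ and $\hcL(\bar{\sigma},\cB)$ has the disc local injectivity property up to dimension $d-\dim(\bar{\sigma})$ by~(iii)(c), we may extend $F|_{\Link_{\bbD^{k+1}}(\sigma)}$ to a locally injective simplicial $G\colon \bbD^{k-\dim(\sigma)+1} \to \hcL(\bar{\sigma},\cB)$ for a combinatorial triangulation agreeing on the boundary with the one on $\Link_{\bbD^{k+1}}(\sigma) \cong \bbS^{k-\dim(\sigma)}$. Just as in the displays of Proposition~\ref{proposition:avoidbadeasy}, the star $\sigma \ast \Link_{\bbD^{k+1}}(\sigma)$ and the disc $\partial\sigma \ast \bbD^{k-\dim(\sigma)+1}$ are both homeomorphic to $\bbD^{k+1}$ and have common boundary $\partial\sigma \ast \Link_{\bbD^{k+1}}(\sigma)$; replacing the star by this disc --- which keeps the triangulation combinatorial --- and replacing $F|_{\sigma\ast\Link_{\bbD^{k+1}}(\sigma)}$ by $(F|_{\partial\sigma}) \ast G$ gives a new simplicial map, taking values in $\partial\bar{\sigma} \ast \hcL(\bar{\sigma},\cB) \subset Y$ by~(iii)(a), that is still locally injective (since $F|_{\partial\sigma}$ and $G$ are locally injective and $\partial\bar{\sigma} \ast \hcL(\bar{\sigma},\cB)$ is a genuine join inside $Y$) and unchanged on $\partial\bbD^{k+1}$ (since $\sigma$ is interior).

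Finally, termination is where hypothesis~(iii)(b) is essential. Every simplex of the modified complex is either an untouched simplex of the original triangulation or has the form $\rho \ast \mu$ with $\rho$ a face of $\partial\sigma$ and $\mu$ a simplex of $\bbD^{k-\dim(\sigma)+1}$, in which case $F(\rho\ast\mu) = F(\rho) \ast G(\mu)$ lies in $\partial\bar{\sigma} \ast \hcL(\bar{\sigma},\cB)$; by~(iii)(b) this simplex can lie in $\cB$ only if it is a face of $\partial\bar{\sigma}$, whence local injectivity forces $\dim(\rho\ast\mu) = \dim F(\rho\ast\mu) \leq \dim(\bar{\sigma}) - 1 < \dim(\sigma)$. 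Thus the surgery removes the simplex $\sigma$ (which maps into $\cB$) and, since $\dim(\sigma)$ was maximal among simplices mapping into $\cB$, creates no new simplices of dimension $\geq \dim(\sigma)$ mapping into $\cB$. Assigning to $F$ the tuple whose entry in dimension $j$ (for $j$ from $k+1$ down to $0$) counts the $j$-simplices mapping into $\cB$, each surgery strictly decreases this tuple in the lexicographic order that weights higher dimensions more heavily (its entries above dimension $\dim(\sigma)$ stay zero, and its entry in dimension $\dim(\sigma)$ drops by at least one); this order on length-$(k+2)$ tuples of nonnegative integers is well-founded, so after finitely many surgeries no simplex maps into $\cB$, and then $F(\bbD^{k+1}) \subset X$ by~(i).

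The step I expect to demand the most care is the surgery itself: verifying that replacing the star of $\sigma$ by a disc through the \emph{enlarged} link $\hcL(\bar{\sigma},\cB)$ produces a map that is genuinely simplicial and locally injective and a triangulation that is genuinely combinatorial. This is routine but delicate, and it is precisely to make it go through that the paper isolates the auxiliary complexes $\hcL(\sigma,\cB)$ with the properties (iii)(a)--(c): $\cL(\sigma,\cB)$ by itself need neither admit a join with $\partial\sigma$ inside $Y$ nor have the disc local injectivity property.
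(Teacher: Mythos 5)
Your proof is correct and follows essentially the same route as the paper's: extend via the disc local injectivity of $Y$, pick a maximal-dimensional simplex mapping to $\cB$, surger its star out through $\hcL$ using (iii)(a)--(c), and iterate. You are a bit more explicit than the paper in two places --- you spell out why the replacement map $(F|_{\partial\sigma}) \ast G$ remains locally injective (the join in the target being genuine), and you give a formal well-founded lexicographic order to justify termination, where the paper simply says ``repeating this over and over'' --- but these are just careful elaborations of the same argument, not a different approach.
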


\begin{remark}
We will use the disc local injectivity property to prove that spaces are highly connected.
However, it will play an essential role.  Indeed, if we change the hypothesis (resp.\ conclusion)
of Proposition \ref{proposition:avoidbadhard} from $Y$ (resp.\ $X$) having the disc local injectivity property
up to dimension $d$ to $Y$ (resp.\ $X$) being $d$-connected, then the proof would not
work.  We will highlight the issue in the proof below with a footnote.
\end{remark}

\begin{proof}[Proof of Proposition \ref{proposition:avoidbadhard}]
For some $k \leq d$, let $f\colon \bbS^k \rightarrow X$ be a locally injective
simplicial map with respect to some combinatorial triangulation of $\bbS^k$.  Since
$Y$ has the disc local injectivity property up to dimension $d$, there exists
a combinatorial triangulation of $\bbD^{k+1}$ agreeing with our given triangulation
of $\bbS^k$ on the boundary and an extension of $f$ to a locally injective
simplicial map $F\colon \bbD^{k+1} \rightarrow Y$.  Assume that the image of $F$ does not lie in $X$.

By (i), there exists a simplex $\sigma$ of $\bbD^{k+1}$ such that $F(\sigma) \in \cB$.  Pick
$\sigma$ such that its dimension is maximal among all simplices with $F(\sigma) \in \cB$.
Just like in the proof of Proposition \ref{proposition:avoidbadeasy}, we can use
(ii) along with the maximality of $\dim(\sigma)$ to deduce that
$F$ takes $\Link_{\bbD^{k+1}}(\sigma)$ to $\cL(F(\sigma),\cB)$.

Since $F$ takes $\partial \bbD^{k+1}$ to $X$, the simplex $\sigma$ does not
lie in $\partial \bbD^{k+1}$.  Thus the complex $\Link_{\bbD^{k+1}}(\sigma)$ is a combinatorial
triangulation of a $(k-\dim(\sigma))$ sphere, and by
(iii).c the complex $\hcL(F(\sigma),\cB)$ has the disc local injectivity property up
to dimension $(d-\dim(F(\sigma)))$.  Since $F$ is locally injective and
\[k-\dim(\sigma) \leq d-\dim(\sigma),\]
we can find a combinatorial triangulation of $\bbD^{k-\dim(\sigma)+1}$ agreeing with that of
$\Link_{\bbD^{k+1}}(\sigma) \cong \bbS^{k-\dim(\sigma)}$ on
$\partial \bbD^{k-\dim(\sigma)+1}$ and a locally injective map 
$G\colon \bbD^{k-\dim(\sigma)+1} \rightarrow \hcL(F(\sigma),\cB)$
extending the restriction of $F$ to $\Link_{\bbD^{k+1}}(\sigma)$.  

We have
\[\sigma \ast \Link_{\bbD^{k+1}}\left(\sigma\right) \cong \bbD^{\dim(\sigma)} \ast \bbS^{k-\dim(\sigma)} \cong \bbD^{k+1}\]
and
\[\partial \sigma \ast \bbD^{k-\dim(\sigma)+1} \cong \bbS^{\dim(\sigma)-1} \ast \bbD^{k-\dim(\sigma)+1} \cong \bbD^{k+1},\]
and also
\[\partial\left(\sigma \ast \Link_{\bbD^{k+1}}\left(\sigma\right)\right) = \partial \sigma \ast \Link_{\bbD^{k+1}}\left(\sigma\right)\]
and
\[\partial\left(\partial \sigma \ast \bbD^{k-\dim(\sigma)+1}\right) = \partial \sigma \ast \partial \bbD^{k-\dim(\sigma)+1} = \partial \sigma \ast \Link_{\bbD^{k+1}}(\sigma).\]
In all of these $=$ means equality of simplicial complexes and $\cong$ means homeomorphism.  By
By (iii).a,\footnote{This is where local injectivity is key.  If $F$ were not locally injective, then
$F$ might not take $\partial \sigma$ to $\partial F(\sigma)$, so the map
$\left(F|_{\partial \sigma}\right) \ast G$ might not be continuous.}
we can modify\footnote{Unlike in previous such arguments, we cannot achieve this modification
by a homotopy since $\hcL(F(\sigma),\cB)$ might not lie in $\Link_{Y}(F(\sigma))$.} $F$ so as to replace
\[F|_{\sigma \ast \Link_{\bbD^{k+1}}(\sigma)}\colon \sigma \ast \Link_{\bbD^{k+1}}(\sigma) \rightarrow Y\]
with
\[\left(F|_{\partial \sigma}\right) \ast G \colon \partial \sigma \ast \bbD^{k-\dim(\sigma)+1} \rightarrow \partial(F(\sigma)) \ast \hcL(F(\sigma),\cB) \subset Y.\]
This eliminates $\sigma$, and by (ii) and (iii).b it does not introduce
any new simplices of dimension at least $\dim(\sigma)$ mapping to simplices in $\cB$.
Repeating this over and over, we can ensure that there are no simplices $\sigma$ of $\bbD^{k+1}$
with $F(\sigma) \in \cB$, so by (i) we have $F(\bbD^{k+1}) \subset X$, as desired.
\end{proof}

\section{The complex of spheres \texorpdfstring{$\Spheres(M_{n,b},H)$}{S(M,H)}}
\label{section:spheres}

We now begin our discussion of the topology of the sphere complex and its subcomplexes.

\subsection{Basic definitions}
We start by recalling the following two definitions from the introduction.

\begin{definition}
Let $M_{n,b}$ be the connect sum of $n$ copies of $S^2 \times S^1$ with
$b$ disjoint open balls removed.  Our convention is that the connect sum of
$0$ copies of $S^2 \times S^1$ is the unit $S^3$ for the connect sum operation,
so $M_{0,b}$ is $S^3$ with $b$ disjoint open balls removed.
\end{definition}

\begin{definition}
A $2$-sphere embedded in $M_{n,b}$ is {\em essential} if it is not homotopic to $\partial M_{n,b}$ or a point.
A {\em rank-$k$ sphere system} in $M_{n,b}$ is a set $\{S_0,\ldots,S_k\}$ of distinct isotopy classes of
essential $2$-spheres embedded in $M_{n,b}$ that can be realized disjointly.
\end{definition}

Throughout this paper, we will identify isotopic essential $2$-spheres and sphere systems.

\subsection{Compatibility with free factors}
We will need to study the relationship between sphere systems and free factors in $\pi_1(M_{n,b})$.
We start with the following two definitions.

\begin{definition}
For some $n \geq 0$ and $b \geq 1$, fix a basepoint $\ast \in \partial M_{n,b}$.  Consider
a sphere system $\sigma = \{S_0,\ldots,S_k\}$ in $M_{n,b}$.  Let $N$ be an open regular neighborhood
of $S_0 \cup \cdots \cup S_k$.  The {\em components of the complement}
of $\sigma$ are the connected components of $M_{n,b} \setminus N$.  The component $X$ of the complement
with $\ast \in \partial X$ is the {\em basepoint-containing component of the complement}.
\end{definition}

Letting the notation be as in the previous definition, if $X$ is the basepoint-containing
component of the complement of $\sigma$, then the map $\pi_1(X,\ast) \rightarrow \pi_1(M_{n,b},\ast)$
is injective and its image is a free factor of $\pi_1(M_{n,b},\ast) \cong F_n$.
We will identify $\pi_1(X,\ast)$ with its image in $\pi_1(M_{n,b},\ast)$.  This
allows the following definition.

\begin{definition}
For some $n \geq 0$ and $b \geq 1$, fix a basepoint $\ast \in \partial M_{n,b}$ 
and let $H < \pi_1(M_{n,b},\ast) \cong F_n$ be a free factor.
A sphere system $\sigma$ in $M_{n,b}$ is {\em $H$-compatible} if the following holds.  Let $X$ be the
basepoint-containing component of the complement of $\sigma$.  We then require that
$H \subset \pi_1(X,\ast)$.
\end{definition}

\subsection{Complex of compatible spheres}
This finally brings us to the following key definition.

\begin{definition}
For some $n \geq 0$ and $b \geq 1$, fix a basepoint $\ast \in \partial M_{n,b}$ 
and let $H < \pi_1(M_{n,b},\ast) \cong F_n$ be a free factor.  The {\em complex of
$H$-compatible spheres} in $M_{n,b}$, denoted $\Spheres(M_{n,b},H)$, is the simplicial
complex whose $k$-simplices are isotopy classes of $H$-compatible rank-$k$ sphere systems
in $M_{n,b}$.  If $H = 1$, then we will sometimes omit it from our notation and just
write $\Spheres(M_{n,b})$.
\end{definition}

This complex was introduced by Hatcher--Vogtmann \cite{HatcherVogtmann, HatcherVogtmannRevised}, who proved
the following theorem:

\begin{theorem}[{Hatcher--Vogtmann, \cite{HatcherVogtmannRevised}}]
\label{theorem:spherescon}
For some $n,b \geq 1$, fix a basepoint $\ast \in \partial M_{n,b}$ 
and let $H < \pi_1(M_{n,b},\ast) \cong F_n$ be a free factor.  Assume
that $\rank(H) \leq n-1$.  Then $\Spheres(M_{n,b},H)$ is contractible.
\end{theorem}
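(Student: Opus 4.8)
The plan is to adapt Hatcher's surgery argument for the contractibility of $\Spheres(M_{n,b})$ \cite{HatcherStabilization} to the $H$-compatible setting, borrowing the surgery machinery wholesale and adding the bookkeeping needed to track $H$-compatibility. First I would exhibit an $H$-compatible vertex. Since $\rank(H)\leq n-1$, the free factor $H$ lies inside some rank-$(n-1)$ free factor $F'$ of $F_n=\pi_1(M_{n,b},\ast)$, and every rank-$(n-1)$ free factor is realized as $\pi_1(M_{n,b}\setminus S_0,\ast)$ for a suitable nonseparating sphere $S_0$ (here $M_{n,b}\setminus S_0\cong M_{n-1,b+2}$, and one uses that $\Mod(M_{n,b})\to\Aut(F_n)$ is onto together with transitivity of $\Aut(F_n)$ on free factors of a fixed rank). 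As $S_0$ is nonseparating its complement is connected, hence equals the basepoint-containing component, whose fundamental group $F'$ contains $H$; thus $S_0$ is $H$-compatible. Now $\St(S_0)=S_0\ast\Link_{\Spheres(M_{n,b},H)}(S_0)$ (the star of $S_0$ in $\Spheres(M_{n,b},H)$) is a cone, hence contractible, so it is enough to homotope every finite subcomplex $L$ into $\St(S_0)$ within $\Spheres(M_{n,b},H)$: then every map of a sphere into the complex, having image in some such $L$, is null-homotopic.

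Next I would run the surgery flow. Put the finitely many vertex spheres of $L$ in normal form with respect to $S_0$. If some of them meet $S_0$, choose an intersection circle innermost on $S_0$, bounding a disc $D\subset S_0$ that meets the vertex spheres only along $\partial D$; the circle $\partial D$ cuts the vertex sphere $S$ containing it into two discs, which, capped off by pushoffs of $D$ to the two sides, yield spheres $S'$ and $S''$. Replace $S$ by whichever of $S',S''$ are essential and not already among the vertices, perform all such innermost surgeries simultaneously, and note---exactly as in \cite{HatcherStabilization}---that this is a well-defined simplicial move strictly decreasing the total number of intersection circles with $S_0$. Iterating finitely often carries $L$ into $\St(S_0)$, with the coherence needed to pass from finite subcomplexes to the full complex handled as by Hatcher.

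The step I expect to be the main obstacle is the one genuinely new point: verifying that this flow stays inside $\Spheres(M_{n,b},H)$, i.e.\ that each surgery preserves $H$-compatibility and not merely essentiality. A surgery along $D\subset S_0$ alters a sphere system only within the complement component(s) abutting $D$, so I would argue that the basepoint-containing component of the complement changes only by shedding a region lying on the far side of $S_0$ from $\ast$; since $S_0$ is $H$-compatible, no element of $H$ is carried by such a region, so $H$ stays inside the fundamental group of the basepoint component at every stage. Pinning this down---in particular, choosing the sides to which the discs are pushed so that within the affected region the basepoint component only grows---is where I expect the real work to lie, and it is the only place the hypothesis $\rank(H)\leq n-1$ is used, since when $\rank(H)=n$ no nonseparating sphere is $H$-compatible at all.
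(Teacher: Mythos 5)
This theorem is not proved in the paper at all: it is imported from Hatcher--Vogtmann, with Remark \ref{remark:onlyone} explaining that the revised version of their paper proves the case $b=1$ by a surgery argument and then deduces $b\geq 2$ from a deformation retraction of $\Spheres(M_{n,b+1},H)$ onto a copy of $\Spheres(M_{n,b},H)$. So your proposal must be judged against their argument, and in outline you are following the same strategy (push finite subcomplexes into the star of a fixed object by Hatcher's innermost-disc surgery flow). The set-up steps are fine: every $H$ of rank at most $n-1$ does lie in the complement subgroup of some nonseparating sphere, and contractibility does reduce to pushing finite subcomplexes into a star.

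The genuine gap is the step you flag yourself, and the justification you offer for it is not correct. Suppose the innermost disc $D\subset S_0$ has interior in the basepoint-containing component $W$ of the complement of the moving system, with $\partial D\subset S$. To realize the surgery as a homotopy inside the complex you must pass through simplices containing $S$ together with the surgered spheres $S'=E_1\cup D$ and $S''=E_2\cup D$, and the basepoint-containing component of the complement of that enlarged system is the basepoint piece $W_1$ of $W$ cut along $D$. Cutting a complementary piece along a properly embedded disc exhibits $\pi_1(W_1)$ as a proper free factor of $\pi_1(W)$, and nothing in your hypotheses forces $H\subset\pi_1(W_1)$. The reasons you give do not repair this: ``the far side of $S_0$ from $\ast$'' is meaningless because $S_0$ is nonseparating, so its complement is connected; $H$-compatibility of $S_0$ only says that loops representing $H$ can be homotoped off $S_0$ \emph{somewhere in $M_{n,b}$}, not that loops representing $H$ that lie in $W$ can be homotoped off $D$ \emph{within} $W$, which is what $H\subset\pi_1(W_1)$ requires; and ``choosing the sides to which the discs are pushed'' cannot help, since the isotopy classes of $S',S''$ and the induced decomposition of the complement are independent of that choice. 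This compatibility-preservation statement is exactly the substantive content of Hatcher--Vogtmann's proof, and it is obtained there by a more careful choice of the fixed target (a system adapted to $H$, not an arbitrary $H$-compatible sphere) together with a normal-form argument showing elements of $H$ can be represented disjointly from both the fixed and the moving systems; with only ``$S_0$ is $H$-compatible'' your flow may leave $\Spheres(M_{n,b},H)$, and the rest of the machinery does not recover from that. A secondary point: for $b\geq 2$ you must also rule out surgered spheres becoming boundary-parallel, which is one reason the published route treats $b\geq 2$ by reduction to $b=1$ rather than by running the surgery argument uniformly in $b$.
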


\begin{remark}
For $H = 1$, this was originally proved by Hatcher \cite{HatcherStabilization}.
\end{remark}

\begin{remark}
\label{remark:onlyone}
In the published version \cite{HatcherVogtmann} of their paper, Hatcher--Vogtmann only prove the case $b=1$ of
Theorem \ref{theorem:spherescon}.  In 2022, they posted the revised version \cite{HatcherVogtmannRevised}
to the arXiv.  This version fixes an error (see Remark \ref{remark:hatchervogtmannerror} below), and
contains the general case of Theorem \ref{theorem:spherescon}.  This is done in two steps:
\cite[Theorem 2.1]{HatcherVogtmannRevised} proves the case $b=1$, and \cite[Lemma 2.3]{HatcherVogtmannRevised}
proves that $\Spheres(M_{n,b+1},H)$ deformation retracts to a complex isomorphic to $\Spheres(M_{n,b},H)$
for $b \geq 1$, proving the case $b \geq 2$.
\end{remark}

\begin{remark}
\label{remark:genuszero}
Theorem \ref{theorem:spherescon} fails for $n=0$.  Since $\pi_1(M_{0,b},\ast) = 1$, the free factor $H$ is irrelevant here and we
will omit it.  The complex $\Spheres(M_{0,b})$ is $(b-4)$-dimensional, and it turns out that it
is homotopy equivalent to a wedge of $(b-4)$-dimensional spheres, i.e., is $(b-5)$-connected.
See\footnote{This corresponds to the case $k=0$ and $C = \emptyset$ of part (1) of the proof
of \cite[Theorem 3.1]{HatcherWahlBoundaries}.  This theorem concerns a complex of discs and spheres,
but since $C = \emptyset$ we are not allowing any discs and it reduces to our complex.}
\cite[proof of Theorem 3.1]{HatcherWahlBoundaries}.
\end{remark}

\subsection{Low complexity cases}
When $\dim(H) = n-1$ and $b=1$, the complex $\Spheres(M_{n,b},H)$ has a particularly simple description:

\begin{lemma}
\label{lemma:changeofcoordinates}
For some $n \geq 1$, fix a basepoint $\ast \in \partial M_{n,1}$
and let $H < \pi_1(M_{n,1},\ast) \cong F_n$ be a free factor with $\rank(H) = n-1$.
The following then hold:
\begin{itemize}
\item If $n=1$ (so $H = 1$), then $\Spheres(M_{1,1})$ consists of a single vertex corresponding
to a nonseparating sphere in $M_{1,1}$.
\item If $n \geq 2$, then $\Spheres(M_{n,1},H)$ consists of two vertices joined by an edge, one
vertex corresponding to a nonseparating sphere and the other to a separating sphere.
\end{itemize}
\end{lemma}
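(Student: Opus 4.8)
The plan is to deduce the lemma from the contractibility of $\Spheres(M_{n,1},H)$ (Theorem~\ref{theorem:spherescon}, which applies since $\rank(H)=n-1\le n-1$) together with a combinatorial analysis of which sphere systems can be $H$-compatible once $\rank(H)=n-1$. For $n=1$ this is quick: $\Spheres(M_{1,1})$ is contractible, hence nonempty and connected, so it is enough to see that it has no edges and that every vertex is nonseparating. Now $M_{1,1}$ has no separating essential sphere (such a sphere would split $\pi_1(M_{1,1})=\Z$ as a nontrivial free product, forcing a simply connected complementary piece with one or two sphere boundary components, i.e.\ a ball or $S^2\times I$, hence an inessential sphere), and no two disjoint non-isotopic essential spheres (cutting along an essential, necessarily nonseparating, sphere gives the simply connected $M_{0,3}$, which contains no essential sphere). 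So $\Spheres(M_{1,1})$ is a single nonseparating vertex.

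For $n\ge 2$ the first observation is that, because $\rank(H)=n-1$, every nonempty $H$-compatible sphere system $\sigma$ has basepoint-component fundamental group \emph{equal} to $H$: the basepoint component of any nonempty essential sphere system has fundamental group of rank $\le n-1$, since cutting along even one essential sphere drops the rank of the basepoint component below $n$ and further cutting cannot increase it; and since a free factor of $F_n$ contained in a free factor of $F_n$ is a free factor of the latter, the inclusion $H\subseteq\pi_1(X)$ of equal-rank free factors is an equality, and the same then holds for every face of $\sigma$. For the structural analysis I would pass to the \emph{dual graph} $G$ of the complement of a rank-$k$ system: the vertices are the complementary pieces, the edges are the $k+1$ spheres, $G$ is connected, and $n=b_1(G)+\sum_P\rank\pi_1(P)$ (sum over the complementary pieces $P$) from the graph-of-groups decomposition with trivial edge groups. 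Since essential spheres neither bound balls nor are boundary-parallel, no complementary piece is $M_{0,1}$ or $M_{0,2}$; hence any complementary piece that is simply connected and is not the basepoint piece $X$ is an $M_{0,b}$ with $b\ge 3$, and therefore has valence $\ge 3$ in $G$. For a rank-$2$ system the basepoint piece has $\pi_1=H$ of rank $n-1$, so $b_1(G)+\sum_{P\neq X}\rank\pi_1(P)=1$; in either case $b_1(G)\in\{0,1\}$ this forces two simply connected non-basepoint pieces, i.e.\ two vertices of $G$ of valence $\ge 3$, but $G$ has only $3$ edges (total valence $6$), leaving valence $<2$ for the other pieces — impossible, as every piece has valence $\ge 1$. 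Thus $\Spheres(M_{n,1},H)$ has dimension $\le 1$.

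The same dual-graph analysis applied to rank-$1$ systems shows that every $H$-compatible edge has the form $\{S,S'\}$, where $S'$ is a separating sphere whose basepoint side is $M_{n-1,2}$ (so $\pi_1=H$) and whose other side is $M_{1,1}$, and $S$ is the unique nonseparating sphere inside that $M_{1,1}$; such an $S$ in turn satisfies $\pi_1(M_{n,1}\setminus S)=H$, so it is a ``nonseparating'' vertex, and likewise the vertices of $\Spheres(M_{n,1},H)$ are exactly the nonseparating spheres with complement group $H$ and the separating spheres with basepoint-side group $H$. Realizing a decomposition $F_n=H\ast\Z$ by a separating sphere $v_2$ (glue an $M_{1,1}$ onto an $M_{n-1,2}$ with $\pi_1=H$) and letting $v_1$ be the nonseparating sphere in its $M_{1,1}$-piece produces an $H$-compatible edge $\{v_1,v_2\}$, so the complex is a nonempty, connected (by Theorem~\ref{theorem:spherescon}) $1$-dimensional complex. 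Since each edge contains exactly one separating vertex and is determined by it, the separating vertices are its leaves. The remaining and crucial input is that there is a \emph{unique} separating vertex up to isotopy — the ``change of coordinates'' statement that a submanifold $X\subseteq M_{n,1}$ with $X\cong M_{n-1,2}$, $\partial M_{n,1}\subseteq\partial X$, and $\pi_1(X)=H$ is unique up to ambient isotopy (equivalently, the mapping class group acts transitively on such spheres). Granting this, every edge is incident to the single separating vertex $v_2$, which is a leaf, so there is exactly one edge; being a connected $1$-complex with one edge and hence no isolated vertices, $\Spheres(M_{n,1},H)$ is exactly $\{v_1,v_2\}$.

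The bulk of the work is the dual-graph bookkeeping that bounds the dimension and classifies the edges, but the genuinely delicate point — the one I expect to be the main obstacle — is the change-of-coordinates uniqueness of the separating vertex up to isotopy.
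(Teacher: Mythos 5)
Most of your dual-graph bookkeeping is fine and even goes beyond what the paper records explicitly: the identification $\pi_1(X,\ast)=H$ for the basepoint piece, the valence count showing $\Spheres(M_{n,1},H)$ is at most $1$-dimensional, and the classification of edges as $\{S,S'\}$ with $S'$ separating off an $M_{1,1}$ whose core is $S$ are all correct (your $n=1$ argument via contractibility plus "no edges" is also a valid alternative to the paper's Laudenbach argument). The genuine gap is exactly the step you defer: the uniqueness, up to isotopy, of the separating vertex. You do not prove it, and the justification you gesture at is not available. In particular, your parenthetical "(equivalently, the mapping class group acts transitively on such spheres)" is not an equivalence: transitivity under diffeomorphisms is much weaker than uniqueness up to ambient isotopy. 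Concretely, for $n=2$ and $H=\langle x\rangle$, a mapping class inducing $x\mapsto x$, $y\mapsto xy$ carries the standard separating sphere (whose complementary $M_{1,1}$ carries the conjugacy class of $y$) to another essential separating sphere that is still $H$-compatible but whose complementary $M_{1,1}$ carries the conjugacy class of $xy$; since the conjugacy class of the complementary factor is unchanged by ambient isotopy, no transitivity statement of this kind can deliver the isotopy-uniqueness your argument needs. So the statement you "grant" is not a routine change-of-coordinates fact one can cite; it is the entire hard content of the separating-vertex half of the lemma, and your plan gives no viable route to it.

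Note also that the paper's own proof is structured precisely so as to avoid any such global uniqueness/transitivity claim. It uses connectivity of $\Spheres(M_{n,1},H)$ (Theorem \ref{theorem:spherescon}) and then argues \emph{locally} that every vertex lies in a unique edge: for a separating vertex this comes from the $n=1$ case applied inside the complementary $M_{1,1}$, and for a nonseparating vertex $S$ it comes from an analysis of the arcs $\alpha$ joining the two copies $S',S''$ of $S$ in the cut manifold $Z\cong M_{n-1,3}$, where the candidate partner is $\partial N(S'\cup\alpha\cup S'')$ and the lightbulb trick is invoked to control $\alpha$. Your outline has no counterpart to this second, nonseparating-vertex step (in your picture it corresponds to showing that different choices of $\alpha$ cannot produce distinct separating partners of the unique nonseparating vertex), and the example above shows that this is exactly where the delicate work lies: arcs $\alpha$ in $Z$ come in infinitely many homotopy classes once $\pi_1(Z)\neq 1$, so some genuinely three-dimensional input is required here, not a citation to transitivity. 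As written, your proposal therefore does not constitute a proof of the $n\geq 2$ case; a minor additional remark is that your existence step (realizing $F_n=H\ast\Z$ by an embedded separating sphere with basepoint-side group literally equal to $H$) also deserves a sentence of justification, e.g.\ via the surjectivity of $\Mod(M_{n,1})\to\Aut(F_n)$ and transitivity of $\Aut(F_n)$ on rank-$(n-1)$ free factors.
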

\begin{proof}
We start by proving that $\Spheres(M_{1,1})$ consists of a single vertex corresponding to 
a nonseparating sphere in $M_{1,1}$.  First note that by basic $3$-manifold topology (in particular,
the uniqueness of the connect sum decomposition, see \cite{MilnorConnectSum} or \cite[\S 3]{Hempel}), we have the
following:
\begin{itemize}
\item Any separating $2$-sphere in $M_{1,1}$ is parallel to $\partial M_{1,1}$, and is thus
not essential.  It follows that the vertices of $\Spheres(M_{1,1})$ correspond to nonseparating
$2$-spheres.
\item Cutting $M_{1,1}$ open along a nonseparating $2$-sphere yields $M_{0,3}$.  From this, we see that the mapping class group
group of $M_{1,1}$ acts transitively on the set of isotopy classes of nonseparating
$2$-spheres in $M_{1,1}$.  For more details, see the discussion of the ``change of coordinates'' principle from \cite[\S 1.3]{FarbMargalitPrimer}, which concerns
the related case of mapping class groups of surfaces.  
\end{itemize}
Combining these two facts, it is enough to prove that there is a single mapping class group
orbit of nonseparating $2$-sphere in $M_{1,1}$.
As we discussed in the introduction,
Laudenbach \cite{LaudenbachPaper, LaudenbachBook} proved that
the action of the mapping class group of $M_{1,1}$ on the set of isotopy classes of $2$-spheres
factors through $\Aut(F_1)$.
We have $\Aut(F_1) = \Z/2$, generated by the automorphism of $F_1 = \Z$ that takes
$1$ to $-1$.  This clearly fixes the ``core'' sphere of
\[M_{1,1} = S^2 \times S^1 \setminus \text{ball},\]
so this is the unique nonseparating $2$-sphere up to isotopy, as desired.

We now turn to the case $n \geq 2$.  Theorem \ref{theorem:spherescon} says that $\Spheres(M_{n,1},H)$ is connected,
so to prove the lemma it is enough to prove that each vertex $S$ of $\Spheres(M_{n,1},H)$ is contained
in a unique edge.  The vertex $S$ is an essential $H$-compatible $2$-sphere in $M_{n,1}$.  If $S$
is a separating $2$-sphere, then since $\rank(H) = n-1$ we must have that $S$ separates
$M_{n,1}$ into components $X$ and $Y$ with $X \cong M_{1,1}$ and $Y \cong M_{n-1,2}$.  The
component $Y$ is the basepoint-containing component and satisfies $\pi_1(Y) = H$.  An edge
in $\Spheres(M_{n,1},H)$ must connect $S$ to an essential $2$-sphere contained in
$X$, and by the previous paragraph there is a unique such essential $2$-sphere.  It follows that
$S$ is contained in a unique edge, as desired.

If instead $S$ is a nonseparating $2$-sphere, then there is a single component $Z$ of the complement
of $S$.  We have $Z \cong M_{n-1,3}$, and $\pi_1(Z,\ast) = H$.  An edge in $\Spheres(M_{n,1},H)$
must connect $S$ to an $H$-compatible separating $2$-sphere $T$ in $Z$.  Letting $S'$ and $S''$
be the boundary components of $Z$ that are parallel to $S$ in $M_{n,1}$, such a $T$ must be
the boundary of a regular neighborhood of $S' \cup S'' \cup \alpha$, where $\alpha$ is an arc
connecting $S'$ to $S''$.  By the lightbulb trick (see, e.g., \cite[Exercise 9.F.4]{RolfsenKnots}), there is a unique such arc up to isotopy, so
there is unique such $T$, as desired.
\end{proof}

\subsection{Dual graph to vertex}
The following notion will be very useful.

\begin{definition}
Let $\sigma$ be a sphere system in $M_{n,b}$.  The {\em dual graph} of $\sigma$, denoted $\Gamma(\sigma)$,
is following graph:
\begin{itemize}
\item The vertices of $\Gamma(\sigma)$ are the components of the complement of $\sigma$.
\item The edges of $\Gamma(\sigma)$ are in bijection with the spheres in $\sigma$, and the
edge corresponding to $S \in \sigma$ connects the vertices corresponding to the components
on either side of $S$.\qedhere
\end{itemize}
\end{definition}

These satisfy the following lemma:

\begin{lemma}
\label{lemma:dualgraph}
Let $\sigma$ be a sphere system in $M_{n,b}$.  Let $X_1,\ldots,X_r$ be the components
of the complement of $\sigma$.  Write $X_i \cong M_{n_i,b_i}$.  Then
\[n = \rank(\pi_1(\Gamma(\sigma))) + \sum_{i=1}^r n_i.\]
\end{lemma}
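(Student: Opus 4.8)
The plan is to compute the first homology (rank of $\pi_1$) of $M_{n,b}$ in two ways, using the decomposition of $M_{n,b}$ along the sphere system $\sigma$. First I would recall that $H_1(M_{n,b};\Q) \cong \Q^n$, so $\rank(\pi_1(M_{n,b})) = n$: indeed $M_{n,b}$ is built from a wedge of $n$ circles (plus higher cells and boundary spheres that do not affect $H_1$), and $\pi_1(M_{n,b}) \cong F_n$ as stated in the paper. So the left-hand side is $\dim_\Q H_1(M_{n,b};\Q)$. The goal is then to show the right-hand side computes the same thing.

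The main tool is a Mayer--Vietoris argument. Let $N$ be an open regular neighborhood of $\bigcup_{S \in \sigma} S$, so $N$ deformation retracts onto a disjoint union of $2$-spheres, one for each $S \in \sigma$; write $s = |\sigma|$ for the number of spheres. The complement $M_{n,b} \setminus N$ is a disjoint union $X_1 \sqcup \cdots \sqcup X_r$, and $M_{n,b}$ is recovered by gluing $\overline{X_1} \sqcup \cdots \sqcup \overline{X_r}$ to $\overline{N}$ along the $2s$ boundary spheres (each sphere $S \in \sigma$ is two-sided, contributing two copies in the boundary of the disjoint union). I would run the Mayer--Vietoris sequence for the decomposition $M_{n,b} = (\coprod_i \overline{X_i}) \cup \overline{N}$, with intersection the $2s$ spheres. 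The key input is that $S^2$ has trivial $H_1$ and $H_0 = \Q$, so the intersection term contributes only in degrees $0$ and $2$. The relevant segment reads
\[
H_2\Big(\coprod_i X_i\Big) \oplus H_2(N) \to H_2(M_{n,b}) \to H_1(\text{spheres}) \to H_1\Big(\coprod_i X_i\Big) \oplus H_1(N) \to H_1(M_{n,b}) \to H_0(\text{spheres}) \to H_0\Big(\coprod_i X_i\Big) \oplus H_0(N) \to H_0(M_{n,b}) \to 0.
\]
Since $H_1$ of a disjoint union of spheres vanishes, $H_1(M_{n,b}) \cong \bigoplus_i H_1(X_i) \;\oplus\; (\text{image of } H_0(\text{spheres}) \text{ in } H_1)$, i.e. $n = \sum_i n_i + (\text{rank of the kernel of } H_0(\text{spheres}) \to H_0(\coprod \overline X_i) \oplus H_0(\overline N))$.

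Now I would identify that last kernel with $H_1(\Gamma(\sigma);\Q)$. The point is that the $0$-dimensional part of Mayer--Vietoris is exactly the simplicial (or rather cellular) chain complex computing the homology of the dual graph $\Gamma(\sigma)$: the vertices of $\Gamma(\sigma)$ are the components $X_i$ together with the components of $N$ (one per sphere, each a thickened $S^2$, hence a single vertex), the edges are the $2s$ gluings... wait — more cleanly: collapse each $X_i$ and each component of $N$ to a point; the resulting quotient is homotopy equivalent to $\Gamma(\sigma)$ (each sphere $S$, being connected, glues the "$N$-component" vertex to its two neighboring $X_i$-vertices, but since the $N$-vertex has exactly two such edges it can be suppressed, recovering the dual graph with one edge per sphere connecting the two sides). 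The cokernel/kernel bookkeeping at the $H_0$ level of Mayer--Vietoris is precisely $\widetilde H_0$ and $H_1$ of this graph; since $\Gamma(\sigma)$ is connected (as $M_{n,b}$ is), the $\widetilde H_0$ term drops and what survives in $H_1(M_{n,b})$ from the $H_0(\text{spheres})$ term has rank exactly $\dim_\Q H_1(\Gamma(\sigma);\Q) = \rank(\pi_1(\Gamma(\sigma)))$, the first Betti number of the graph.

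The main obstacle I anticipate is the bookkeeping in identifying the Mayer--Vietoris connecting-map data at the $H_0$ level with the graph homology cleanly — in particular handling the two-sidedness of each sphere (so that each $S \in \sigma$ really contributes one graph edge between its two sides, not two), and confirming that the "$N$-component" vertices can be suppressed without changing $H_1$ of the graph. An alternative, perhaps cleaner, route avoiding Mayer--Vietoris entirely: build a CW model of $M_{n,b}$ directly from the pieces. Up to homotopy, $M_{n,b} \simeq \Gamma(\sigma) \cup (\text{data from the } X_i)$; more precisely, there is a homotopy pushout expressing $M_{n,b}$ as the homotopy colimit over the dual graph $\Gamma(\sigma)$ of the pieces $X_i$ (at vertices) and points (at edges, since each sphere is simply connected), and the Mayer--Vietoris spectral sequence of this homotopy colimit gives $\rank \pi_1(M_{n,b}) = \rank \pi_1(\Gamma(\sigma)) + \sum_i \rank\pi_1(X_i) = \rank\pi_1(\Gamma(\sigma)) + \sum_i n_i$ directly. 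Either way the arithmetic at the end is immediate; the only real work is the topological identification, which is standard $3$-manifold cut-and-paste.
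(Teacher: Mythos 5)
Your proposal is correct. For comparison: the paper offers no argument at all here --- its proof is the single word ``Immediate,'' the implicit reasoning being the standard van Kampen / graph-of-groups picture: since each sphere in $\sigma$ is simply connected, cutting along $\sigma$ exhibits $\pi_1(M_{n,b}) \cong F_n$ as the fundamental group of a graph of groups over $\Gamma(\sigma)$ with trivial edge groups, i.e.\ a free product of the $\pi_1(X_i) \cong F_{n_i}$ with a free group of rank $\rank(\pi_1(\Gamma(\sigma)))$, whence the formula by counting ranks. Your Mayer--Vietoris route computes the same number on $H_1(\,\cdot\,;\Q)$ instead, which is perfectly valid and arguably needs less machinery (no basepoint or graph-of-groups formalism), at the cost of the $H_0$-level bookkeeping you flag. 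That bookkeeping is genuinely harmless: with $s=|\sigma|$ spheres and $r$ components, the relevant kernel of $H_0(A\cap B)\to H_0(A)\oplus H_0(B)$ has dimension $2s-(r+s-1)=s-r+1$ by exactness (using that $M_{n,b}$, hence $\Gamma(\sigma)$, is connected, so the cokernel is $H_0(M_{n,b})\cong\Q$), and $s-r+1$ is exactly the first Betti number $\rank(\pi_1(\Gamma(\sigma)))$ of a connected graph with $r$ vertices and $s$ edges; the ``suppress the degree-two $N$-vertices'' step you describe is also fine, since suppressing a vertex incident to exactly two edges does not change the homotopy type of the graph. Your second suggested route (homotopy pushout over $\Gamma(\sigma)$ with point edge-pieces) is essentially the van Kampen argument the paper has in mind, so either writeup would be a faithful expansion of the paper's ``Immediate.''
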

\begin{proof}
Immediate.
\end{proof}

\subsection{Cohen--Macaulay}
Using Theorem \ref{theorem:spherescon}, we can prove the following.

\begin{theorem}
\label{theorem:spherescm}
For some $n,b \geq 1$, fix a basepoint $\ast \in \partial M_{n,b}$
and let $H < \pi_1(M_{n,b},\ast) \cong F_n$ be a free factor.  Assume that $(n,b) \neq (1,1)$ and
that $\rank(H) \leq n-1$.  Then $\Spheres(M_{n,b},H)$ is weakly Cohen--Macaulay of dimension
$n-\rank(H)$.
\end{theorem}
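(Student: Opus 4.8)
With $d := n - \rank(H)$, the definition of weakly Cohen--Macaulay has two requirements, and the first --- that $X := \Spheres(M_{n,b},H)$ be $(d-1)$-connected --- is immediate: since $n,b \geq 1$ and $\rank(H) \leq n-1$, Theorem \ref{theorem:spherescon} says that $X$ is contractible. So the plan is to concentrate on the second requirement, that $\Link_X(\sigma)$ be $(n - \rank(H) - \dim(\sigma) - 2)$-connected for every simplex $\sigma$ of $X$. For $\sigma = \emptyset$ this is again contractibility of $X$, so I will take $\sigma$ to be a genuine $k$-simplex with $k = \dim(\sigma) \geq 0$.

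The first step is a join decomposition of the link. Cutting $M_{n,b}$ along $\sigma$ yields components $X_1, \dots, X_r$, where $X_i \cong M_{n_i,b_i}$ and I take $X_1$ to be the basepoint-containing component; then $\pi_1(X_1)$ is a free factor of $F_n$ containing $H$, so (free factors of free factors are free factors) $H$ is a free factor of $\pi_1(X_1) \cong F_{n_1}$ with $\rank(H) \leq n_1$. Using Hatcher's normal-form theory for sphere systems, every simplex $\tau$ of $\Link_X(\sigma)$ is, up to isotopy, a disjoint union $\tau = \tau_1 \sqcup \cdots \sqcup \tau_r$ with $\tau_i$ a (possibly empty) sphere system in $X_i$; and such a disjoint union is a simplex of $\Link_X(\sigma)$ precisely when $\tau_1$ is $H$-compatible in $X_1$, no condition being imposed on $\tau_2, \dots, \tau_r$. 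This gives a simplicial isomorphism
\[
\Link_X(\sigma) \;\cong\; \Spheres(X_1,H) \ast \Spheres(X_2) \ast \cdots \ast \Spheres(X_r).
\]

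Next I would estimate the connectivity of the factors and run the count. If $\rank(H) \leq n_1 - 1$, then $\Spheres(X_1,H)$ is contractible by Theorem \ref{theorem:spherescon}, so the whole join is contractible and we are done; likewise, if some $X_i$ with $i \geq 2$ has $n_i \geq 1$, then $\Spheres(X_i)$ is contractible (Theorem \ref{theorem:spherescon} with the trivial free factor) and the join is again contractible. The only remaining case is $\rank(H) = n_1$ together with $n_i = 0$ for all $i \geq 2$. There each $\Spheres(X_i)$ with $i \geq 2$ equals $\Spheres(M_{0,b_i})$ and is $(b_i - 5)$-connected by Remark \ref{remark:genuszero}; and for the basepoint factor, if $n_1 = 0$ then $H = 1$ and $\Spheres(X_1) = \Spheres(M_{0,b_1})$ is $(b_1 - 5)$-connected, while if $n_1 \geq 1$ then a change-of-coordinates argument --- absorbing all of $\pi_1(X_1) = H$ onto the basepoint side and identifying the $H$-compatible spheres with the spheres in the genus-zero complement of a genus core --- shows $\Spheres(X_1,\pi_1(X_1)) \cong \Spheres(M_{0,b_1+1})$, which is $(b_1 - 4)$-connected by Remark \ref{remark:genuszero}. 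Combining these bounds with the standard fact that a join $A_1 \ast \cdots \ast A_r$ of $a_i$-connected complexes $A_i$ is $(a_1 + \cdots + a_r + 2(r-1))$-connected, and with the identities $n = (k+2-r) + \sum_i n_i$ (obtained from the dual graph of $\sigma$ via Lemma \ref{lemma:dualgraph}) and $\sum_i b_i = b + 2(k+1)$ (from cutting along the $k+1$ spheres of $\sigma$), the required $(n - \rank(H) - k - 2)$-connectivity of $\Link_X(\sigma)$ reduces, after routine arithmetic, to the inequality $b + 2(n - \rank(H)) \geq 3$ in the sub-case $n_1 \geq 1$ --- true since $b \geq 1$ and $n - \rank(H) \geq 1$ --- and to $b + 2n \geq 4$ in the sub-case $n_1 = 0$ --- true precisely because $(n,b) \neq (1,1)$.

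I expect the main obstacle to be the case $\rank(H) = \rank(\pi_1(X_1))$, which is exactly the situation not covered by Theorem \ref{theorem:spherescon}: there $\Spheres(X_1,H)$ is in general not contractible, so its homotopy type must be determined directly, and this is what the change-of-coordinates identification with a genus-zero sphere complex accomplishes. The remaining ingredients --- the join decomposition of the link and the final numerical check --- are routine, although the bookkeeping of boundary components (both in the change of coordinates and in the count) has to be done carefully; it is a good sign that the count closes using each of the hypotheses $\rank(H) \leq n-1$ and $(n,b) \neq (1,1)$ exactly once.
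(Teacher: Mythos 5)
Your first two steps — the join decomposition $\Link_X(\sigma) \cong \Spheres(X_1,H) \ast \Spheres(X_2) \ast \cdots \ast \Spheres(X_r)$ and the disposal of every case in which Theorem \ref{theorem:spherescon} makes some factor contractible — coincide with the paper's proof. The divergence, and the genuine gap, is in the residual case $\rank(H)=n_1$, $n_i=0$ for $i\geq 2$, where you assert a change-of-coordinates identification $\Spheres(X_1,\pi_1(X_1)) \cong \Spheres(M_{0,b_1+1})$ for $n_1\geq 1$. This is false, even up to homotopy equivalence. Take $X_1 \cong M_{1,3}$ with non-basepoint boundary spheres $\beta_1,\beta_2$: a $\pi_1$-compatible essential sphere must cut off $\beta_1\cup\beta_2$, and such a sphere is the boundary of a neighborhood of $\beta_1\cup\alpha\cup\beta_2$ for an arc $\alpha$; arcs winding differently around the handle give spheres with distinct free homotopy classes in $\pi_2(M_{1,3})$ (lift to the universal cover: the classes $[\tilde\beta_1]+t^g[\tilde\beta_2]$ are pairwise inequivalent under the deck group), hence pairwise non-isotopic. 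So $\Spheres(M_{1,3},\pi_1)$ is an infinite discrete set, while $\Spheres(M_{0,4})$ has exactly three vertices; these are not homotopy equivalent. The intuition ``absorb the genus onto the basepoint side and work in the complement of a fixed core'' fails because a compatible sphere need not be isotopic into the complement of a \emph{fixed} core — the winding of the connecting tubes around the handles is an invariant. (This is exactly the kind of subtlety that Remark \ref{remark:hatchervogtmannerror} warns about: the spheres you would like to retract onto live in different positions relative to the core.) The weaker connectivity statement you extract ($(b_1-4)$-connectedness of $\Spheres(M_{n_1,b_1},\pi_1)$) may well be true, but it is a nontrivial theorem-level claim and your argument does not establish it.

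The paper avoids needing any such statement. In the residual case it first runs the dual-graph count: $n=\rank(\pi_1(\Gamma(\sigma)))+\rank(H)$ and $\Gamma(\sigma)$ has $k+1$ edges, so $n\leq k+1+\rank(H)$, i.e.\ the required connectivity $n-\rank(H)-k-2$ is at most $-1$. Hence only \emph{nonemptiness} of the link is ever needed, and only when equality holds, which forces $r=1$ and $X_1\cong M_{n-k-1,b+2k+2}$ with $H=\pi_1(X_1)$; a single $H$-compatible sphere cutting off two boundary spheres does the job, and the hypothesis $(n,b)\neq(1,1)$ enters precisely to guarantee $X_1\neq M_{0,3}$. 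Your own bookkeeping actually reveals the same thing: in your residual case the target connectivity equals $-r$, so for $r\geq 2$ there is nothing to prove and for $r=1$ you only need the link to be nonempty — at which point the elaborate join computation, and with it the false identification, can be replaced by the paper's one-line construction of a compatible sphere. As written, though, your proof rests on that identification and so does not go through.
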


\begin{remark}
\label{remark:spherescmboundary}
The conditions that $(n,b) \neq (1,1)$ and $\rank(H) \leq n-1$ are necessary.  Indeed, if $\rank(H) = n$ then
$\Spheres(M_{n,1},H)$ is the empty set, and thus is not weakly Cohen--Macaulay of dimension $0$.  Similarly,
by Lemma \ref{lemma:changeofcoordinates} the
complex $\Spheres(M_{1,1})$ is a single point.  Thus while it is contractible (and hence connected), it
is not weakly Cohen--Macaulay of dimension $1$.
\end{remark} 

\begin{proof}[Proof of Theorem \ref{theorem:spherescm}]
Given our assumptions, Theorem \ref{theorem:spherescon} implies that $\Spheres(M_{n,b},H)$
is contractible, and thus is certainly $(n-1-\rank(H))$-connected.  Letting $\sigma = \{S_0,\ldots,S_k\}$
be a $k$-simplex of $\Spheres(M_{n,b},H)$, we must prove that $\Link_{\Spheres(M_{n,b},H)}(\sigma)$
is $(n-2-k-\rank(H))$-connected.  What we will prove is that either
\begin{itemize}
\item $\Link_{\Spheres(M_{n,b},H)}(\sigma)$ is contractible, or
\item $n = k + 1 + \rank(H)$ and $\Link_{\Spheres(M_{n,b},H)}(\sigma)$ is nonempty, or
\item $n \leq k + \rank(H)$.
\end{itemize}
This will imply that $\Link_{\Spheres(M_{n,b},H)}(\sigma)$ is always $(n-2-k-\rank(H))$-connected.

Let the components of the complement of $\sigma$ be
$X_1,\ldots,X_r$ with $X_1$ the basepoint-containing component.  We then have that
\[\Link_{\Spheres(M_{n,b},H)}(\sigma) = \Spheres(X_1,H) \ast \Spheres(X_2) \ast \cdots \ast \Spheres(X_r).\]
Write $X_i = M_{n_i,b_i}$.  If $\rank(H) \leq n_1 - 1$, then Theorem \ref{theorem:spherescon} implies
that $\Spheres(X_1,H)$ is contractible, so $\Link_{\Spheres(M_{n,b},H)}(\sigma)$ is contractible.
Similarly, if $n_i \geq 1$ for
any $2 \leq i \leq r$, then Theorem \ref{theorem:spherescon} implies that
$\Spheres(X_i)$ is contractible, so $\Link_{\Spheres(M_{n,b},H)}(\sigma)$ is contractible.
We can therefore assume without loss of generality that $n_1 = \rank(H)$ and that $n_i = 0$ for
$2 \leq i \leq r$.  Our goal then reduces to proving that
$n \leq k + 1 + \rank(H)$, and that if $n = k + 1 + \rank(H)$ then $\Link_{\Spheres(M_{n,b},H)}(\sigma)$
is nonempty.

Letting $\Gamma(\sigma)$ be the dual graph of $\sigma$, Lemma \ref{lemma:dualgraph} implies that
\[n = \rank(\pi_1(\Gamma(\sigma))) + \sum_{i=1}^r n_i = \rank(\pi_1(\Gamma(\sigma))) + \rank(H).\]
Since $\sigma$ contains $(k+1)$ spheres, the graph $\Gamma(\sigma)$ has $(k+1)$ edges.  It follows
that $\rank(\pi_1(\Gamma(\sigma))) \leq k+1$, so
\[n \leq k+1 + \rank(H).\]
This is half of what we are trying to prove.  What remains is to show that if this inequality
is an equality, then $\Link_{\Spheres(M_{n,b},H)}(\sigma)$
is nonempty.

For our inequality to be an equality, we must have $\rank(\pi_1(\Gamma(\sigma))) = k+1$.  Since
$\Gamma(\sigma)$ has $(k+1)$ edges, this can only happen if $\Gamma(\sigma)$ has a single
vertex and all the edges are self-loops.  In other words, there is only one component
of the complement of $\sigma$, namely the basepoint-containing component $X_1$.  We thus
have
\[X_1 \cong M_{n-k-1,b+2k+2} \quad \text{and} \quad H = \pi_1(X_1).\]
If either $b \geq 2$ or $k \geq 1$, then $X_1$ has at least $4$ boundary components, so $X_1$ has an $H$-compatible
separating\footnote{For later use, note that if $k \geq 1$ then we can choose the $H$-compatible separating sphere in $X_1$
such that it becomes nonseparating in the larger $3$-manifold $M_{n,b}$.} sphere that cuts off two boundary spheres and thus
\[\Link_{\Spheres(M_{n,b},H)}(\sigma) \cong \Spheres(X_1,H) \neq \emptyset.\]
If instead $b = 1$ and $k = 0$, then $X_1 \cong M_{n-1,3}$.  This
is where we finally invoke our assumption that $(n,b) \neq (1,1)$, which implies that
$X_1$ is not just a $3$-holed sphere, so $X_1$ has an $H$-compatible
separating sphere that cuts off two boundary spheres and
\[\Link_{\Spheres(M_{n,b},H)}(\sigma) \cong \Spheres(X_1,H) \neq \emptyset.\qedhere\]
\end{proof}

\section{The nonseparating complex of spheres \texorpdfstring{$\NSpheres(M_{n,b},H)$}{N(M,H)}}
\label{section:nsspheres}

The next step is to consider several subcomplexes of $\Spheres(M_{n,b},H)$.

\subsection{Nonseparating simplices}
We start with the following.

\begin{definition}
For some $n \geq 0$ and $b \geq 1$, fix a basepoint $\ast \in \partial M_{n,b}$
and let $H < \pi_1(M_{n,b},\ast) \cong F_n$ be a free factor.  The {\em nonseparating complex of
$H$-compatible spheres} in $M_{n,b}$, denoted $\NSpheres(M_{n,b},H)$, is
the subcomplex of $\Spheres(M_{n,b},H)$ whose $k$-simplices are $H$-compatible rank-$k$ sphere
systems $\{S_0,\ldots,S_k\}$ in $M_{n,b}$ such that $S_0 \cup \cdots \cup S_k$ does not
separate $M_{n,b}$.
\end{definition}

\subsection{High connectivity}
The following theorem of Hatcher--Vogtmann \cite{HatcherVogtmann, HatcherVogtmannRevised} says that these complexes are highly connected:

\begin{theorem}[{Hatcher--Vogtmann \cite{HatcherVogtmannRevised}}]
\label{theorem:nspherescon}
For some $n,b \geq 1$, fix a basepoint $\ast \in \partial M_{n,b}$
and let $H < \pi_1(M_{n,b},\ast) \cong F_n$ be a free factor.  Then,
$\NSpheres(M_{n,b},H)$ is $(n-\rank(H)-2)$-connected.
\end{theorem}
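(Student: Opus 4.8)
The plan is to deduce this from the contractibility of the full sphere complex (Theorem \ref{theorem:spherescon}) via an induction that carefully tracks how sphere systems are forced to separate $M_{n,b}$. First I would dispose of the degenerate cases: if $\rank(H)=n$ then $n-\rank(H)-2=-2$ and there is nothing to prove, while if $\rank(H)=n-1$ one only needs $\NSpheres(M_{n,b},H)$ to be nonempty, which follows from a change-of-coordinates argument producing an $H$-compatible nonseparating sphere (compare Lemma \ref{lemma:changeofcoordinates}). So assume $\rank(H)\le n-2$ and argue by induction on $n$.

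Given a map $f\colon\bbS^{k}\to\NSpheres(M_{n,b},H)$ with $k\le n-\rank(H)-2$, Theorem \ref{theorem:spherescon} lets us extend it to a simplicial map $F\colon\bbD^{k+1}\to\Spheres(M_{n,b},H)$, and the goal is to homotope $F$ rel $\partial\bbD^{k+1}$ until no simplex is sent to a separating sphere system. The obstruction to applying the bad-simplex machinery of \S\ref{section:badsimplex} directly is that separating-ness is monotone under adding spheres: a supersystem of a separating system is again separating. Consequently, if one takes the separating systems as the bad set $\cB$, the auxiliary complexes $\cL(\sigma,\cB)$ are empty, and the recipe of Proposition \ref{proposition:avoidbadeasy} stalls; resolving a maximal bad simplex $\sigma$ with $F(\sigma)$ separating instead requires a genuine surgery that \emph{removes} a sphere of $F(\sigma)$ and reconnects complementary pieces of $M_{n,b}$ along an arc. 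The inductive hypothesis is what makes such a surgery possible.

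To set the induction up I would analyze the link in $\NSpheres(M_{n,b},H)$ of a vertex $S$, an $H$-compatible nonseparating sphere. Cutting $M_{n,b}$ along $S$ yields $M'\cong M_{n-1,b+2}$ with two distinguished new boundary spheres $S',S''$, and $\Link_{\NSpheres(M_{n,b},H)}(S)$ is identified with a \emph{relative} nonseparating sphere complex of $M'$: the $H$-compatible systems $\sigma'$ in $M'$ whose union, after the two sides of $S$ are reglued, still fails to separate $M_{n,b}$ --- equivalently, $\sigma'$ either does not separate $M'$, or separates it into exactly two pieces, one containing $S'$ and the other $S''$. The key technical point is that this relative complex is $(n-\rank(H)-3)$-connected; I would prove this by strengthening the inductive statement to cover such relative complexes of $M_{m,c}$ (carrying along an arbitrary finite matching of boundary spheres together with a compatible free factor), the induction again running on $m$. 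Once all relative vertex links are known to be highly connected, the surgery on separating simplices in $\bbD^{k+1}$ described above can be carried out, completing the induction.

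The main obstacle --- and essentially all of the work --- is this relative statement: that the complex of sphere systems in $M_{m,c}$ not separating a prescribed matching of boundary spheres is highly connected. This amounts to feeding the contractibility results of \S\ref{section:spheres} into the bad-simplex technology of \S\ref{section:badsimplex} in the relative setting, while controlling how the boundary matching and the free factor $H$ interact each time one cuts along an additional sphere; the bookkeeping needed to get around the monotonicity issue above is where the subtlety concentrates.
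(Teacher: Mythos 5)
A preliminary remark on the comparison you were asked to survive: the paper does not prove Theorem \ref{theorem:nspherescon} at all --- it is imported from Hatcher--Vogtmann's revised paper \cite{HatcherVogtmannRevised} (see Remark \ref{remark:hatchervogtmannerror}, which recounts the error in the published version and its fix). So your argument must stand on its own, and as written it has two genuine gaps. The first is a concrete structural error: your identification of vertex links in $\NSpheres(M_{n,b},H)$ is wrong. If $S$ is a vertex and $M'\cong M_{n-1,b+2}$ is the result of cutting along $S$, then for a system $\sigma'$ in $M'$ the complement of $\{S\}\cup\sigma'$ in $M_{n,b}$ is exactly the complement of $\sigma'$ in $M'$, so $\{S\}\cup\sigma'$ is nonseparating in $M_{n,b}$ if and only if $\sigma'$ is nonseparating in $M'$. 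Hence $\Link_{\NSpheres(M_{n,b},H)}(S)\cong\NSpheres(M',H)$, the \emph{absolute} complex of the cut manifold --- this is precisely the identification used in the proof of Theorem \ref{theorem:nspherescm} --- and not your ``relative'' complex. The extra systems you allow (those separating $M'$ into two pieces, one containing $S'$ and the other $S''$) are exactly the systems for which $\{S\}\cup\sigma'$ separates $M_{n,b}$ into two pieces; such simplices are the ones the augmented complex $\ANSpheres$ adds to $\NSpheres$, and they do not lie in the link. So the strengthened induction over ``boundary matchings'' is not forced by the structure of $\NSpheres$ in the way you set it up.

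The second gap is the decisive one. Your diagnosis of why Proposition \ref{proposition:avoidbadeasy} cannot be applied naively is correct (with $\cB$ the separating systems, monotonicity of separation makes every $\cL(\sigma,\cB)$ empty), but the proposed replacement --- the ``genuine surgery'' that removes a sphere of a maximal separating simplex $F(\sigma)$ in the disc and reconnects complementary pieces along an arc --- is never actually described, and the relative connectivity statement you would feed into it is not proved; you yourself defer ``essentially all of the work'' to it. That step is the entire content of the theorem, and it is exactly the point where Hatcher--Vogtmann's published argument failed (the false deformation retraction recalled in Remark \ref{remark:hatchervogtmannerror}), so it cannot be treated as bookkeeping. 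As it stands, your text is a plan containing one incorrect structural claim and missing its central step, rather than a proof.
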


\begin{remark}
\label{remark:hatchervogtmannerror}
Hatcher--Vogtmann's published proof of Theorem \ref{theorem:nspherescon} in \cite{HatcherVogtmann}
has a mistake: \cite[Lemma 2.3]{HatcherVogtmann} claims that $\NSpheres(M_{n,b+1},H)$
deformation retracts to $\NSpheres(M_{n,b},H)$, which is false (e.g., $\NSpheres(M_{1,1})$ is a single
point, but $\NSpheres(M_{1,2})$ is an infinite discrete set).  The issue is that their
deformation retraction uses simplices that do not lie in $\NSpheres(M_{n,b+1},H)$.  
In 2022, they posted a revised version \cite{HatcherVogtmannRevised} of their paper to the arXiv
correcting this mistake.  Theorem \ref{theorem:nspherescon} is \cite[Theorem 2.5]{HatcherVogtmannRevised}.
\end{remark}

\subsection{Cohen--Macaulay}
Using Theorem \ref{theorem:nspherescon}, we can prove the following.

\begin{theorem}
\label{theorem:nspherescm}
For some $n,b \geq 1$, fix a basepoint $\ast \in \partial M_{n,b}$
and let $H < \pi_1(M_{n,b},\ast) \cong F_n$ be a free factor.
Then $\NSpheres(M_{n,b},H)$ is weakly Cohen--Macaulay of dimension $n-\rank(H)-1$.
\end{theorem}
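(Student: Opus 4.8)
The plan is to run essentially the same argument as in the proof of Theorem~\ref{theorem:spherescm}, feeding in Theorem~\ref{theorem:nspherescon} in place of Theorem~\ref{theorem:spherescon}. Set $d = n - \rank(H) - 1$; we may assume $\rank(H) \le n-1$, since otherwise $\NSpheres(M_{n,b},H)$ is empty. By Definition~\ref{definition:cm} there are two things to check: that $\NSpheres(M_{n,b},H)$ is $(d-1)$-connected, which is exactly Theorem~\ref{theorem:nspherescon}; and that for every $k$-simplex $\sigma$ of $\NSpheres(M_{n,b},H)$ the link $\Link_{\NSpheres(M_{n,b},H)}(\sigma)$ is $(d-k-2)$-connected. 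Only the second point requires work.

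Fix a $k$-simplex $\sigma = \{S_0,\dots,S_k\}$. The key observation is that, because $\sigma$ is nonseparating, the complement of $\sigma$ is a single component $X$, so the link here is a single nonseparating-sphere complex rather than a join as in Theorem~\ref{theorem:spherescm}. First I would pin down the homeomorphism type of $X$ via the dual graph $\Gamma(\sigma)$, which has one vertex and $k+1$ self-loops; thus $\rank(\pi_1(\Gamma(\sigma))) = k+1$, and Lemma~\ref{lemma:dualgraph} gives $X \cong M_{n-k-1,\,b+2k+2}$, each sphere cut along contributing two new boundary spheres. Since $\sigma$ is $H$-compatible we have $H < \pi_1(X,\ast) \cong F_{n-k-1}$, and then $H$ is a free factor of $\pi_1(X)$ too, using the standard fact that if $H \le K$ are subgroups of a free group and both $H$ and $K$ are free factors, then $H$ is a free factor of $K$. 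The usual cutting correspondence for sphere systems---already used implicitly in the proof of Theorem~\ref{theorem:spherescm}, and which matches up essential spheres, disjointness from $\sigma$, the nonseparating condition, and $H$-compatibility between $X$ and $M_{n,b}$---then yields a simplicial isomorphism
\[
\Link_{\NSpheres(M_{n,b},H)}(\sigma) \;\cong\; \NSpheres\bigl(M_{n-k-1,\,b+2k+2},\,H\bigr).
\]
Now plug this into Theorem~\ref{theorem:nspherescon}. If $k \le n-2$, then $n-k-1 \ge 1$ and $b+2k+2 \ge 1$, so that theorem applies and yields that the right-hand side is $\bigl((n-k-1) - \rank(H) - 2\bigr)$-connected, i.e.\ $(d-k-2)$-connected, exactly as required. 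If $k = n-1$, then $\pi_1(X) = 1$, so $H = 1$, $d = n-1 = k$, and the requirement becomes $(-2)$-connectivity, which holds automatically; and $k \ge n$ cannot occur for a nonseparating simplex by the dual-graph count.

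I do not expect a genuine obstacle here: this is a routine variant of Theorem~\ref{theorem:spherescm}, and the connectivity bookkeeping matches on the nose. The points that need care are stating the cutting correspondence precisely---in particular, a sphere of $X$ that is isotopic in $X$ to one of the new boundary spheres becomes one of the $S_i$ back in $M_{n,b}$, which is why such spheres are correctly excluded from the link---and dealing with the degenerate case in which the complement $X$ has genus zero.
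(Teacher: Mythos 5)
Your proof is correct and takes essentially the same approach as the paper: identify the link of a $k$-simplex $\sigma$ with $\NSpheres(M_{n-k-1,b+2k+2},H)$ using the single complementary component, then feed the connectivity of that complex (Theorem \ref{theorem:nspherescon}) into the Cohen--Macaulay bookkeeping. The paper states this more tersely; your version just fills in the dual-graph computation and the edge cases $k=n-1$ and $k\ge n$, which the paper leaves implicit.
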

\begin{proof}
Immediate from Theorem \ref{theorem:nspherescon} along with the fact that if
$\sigma = \{S_0,\ldots,S_k\}$ is a $k$-simplex of $\NSpheres(M_{n,b},H)$, then
\[\Link_{\NSpheres(M_{n,b},H)}(\sigma) \cong \NSpheres(M_{n-k-1,b+2k+2},H).\]
The point here is that the only connected component of the complement of $\sigma$
is homeomorphic to $M_{n-k-1,b+2k+2}$.
\end{proof}

\section{The augmented nonseparating complex of spheres \texorpdfstring{$\ANSpheres(M_{n,b},H)$}{AN(M,H)} I: sphere local injectivity}
\label{section:augmentedsphere}

We now add some simplices to the nonseparating complex of spheres $\NSpheres(M_{n,b},H)$ to increase its connectivity
by one.

\subsection{Separating core}
This requires the following definition.

\begin{definition}
Let $\sigma$ be a sphere system on $M_{n,b}$ with dual graph $\Gamma(\sigma)$.  The
{\em separating core} of $\sigma$ is the face $\sigma'$ of $\sigma$ consisting of all
$S \in \sigma$ such that the edge of $\Gamma(\sigma)$ corresponding to $S$ is not a loop.
The {\em nonseparating periphery} of $\sigma$ is the face of $\sigma$ consisting of all
$S \in \sigma$ such that the edge of $\Gamma(\sigma)$ corresponding to $S$ is a loop. 
\end{definition}

Another way to think about this is as follows.  Let $\sigma$ be a sphere system on $M_{n,b}$
with separating core $\sigma'$ and nonseparating periphery $\sigma''$.  As simplices
of $\Spheres(M_{n,b})$, we have $\sigma = \sigma' \ast \sigma''$.  The following hold:
\begin{itemize}
\item Any proper face of $\sigma'$ has strictly fewer components in its complement than
$\sigma'$.
\item Let $X_1,\ldots,X_r$ be the components of the complement of $\sigma'$.  The link
of $\sigma'$ in $\Spheres(M_{n,b})$ is thus
\[\Spheres(X_1) \ast \cdots \ast \Spheres(X_r).\]
Then $\sigma''$ lies in the subcomplex
\[\NSpheres(X_1) \ast \cdots \ast \NSpheres(X_r) \subset \Spheres(X_1) \ast \cdots \ast \Spheres(X_r).\]
\end{itemize}

\subsection{Augmented complex}
The definition of our complex is as follows.

\begin{definition}
For some $n \geq 0$ and $b \geq 1$, fix a basepoint $\ast \in \partial M_{n,b}$
and let $H < \pi_1(M_{n,b},\ast) \cong F_n$ be a free factor.  The {\em augmented nonseparating complex of
$H$-compatible spheres} in $M_{n,b}$, denoted $\ANSpheres(M_{n,b},H)$, is
the subcomplex of $\Spheres(M_{n,b},H)$ whose $k$-simplices are $H$-compatible rank-$k$ sphere
systems $\sigma = \{S_0,\ldots,S_k\}$ in $M_{n,b}$ with the following properties:
\begin{itemize}
\item Each $S_i$ is a nonseparating sphere.
\item Let $\sigma'$ be the separating core of $\sigma$.  Then either $\sigma'$ is empty (so
$\sigma$ does not separate $M_{n,b}$), or $\sigma'$ has two components $X$ and $Y$ in
its complement.  Moreover, in the latter case if $X$ is the basepoint-containing component
then $\pi_1(Y) = 1$.\qedhere
\end{itemize}
\end{definition}

\subsection{Connectivity theorem}
Recall that Theorem \ref{theorem:nspherescon} says that $\NSpheres(M_{n,b},H)$ is $(n-2-\rank(H))$-connected.
Our main theorem about the augmented complexes is as follows:

\begin{theorem}
\label{theorem:nspheresredcon}
For some $n,b \geq 1$, fix a basepoint $\ast \in \partial M_{n,b}$
and let $H < \pi_1(M_{n,b},\ast) \cong F_n$ be a free factor.  Assume
that $\rank(H) \leq n-1$.  Then $\ANSpheres(M_{n,b},H)$
is $(n-1-\rank(H))$-connected.
\end{theorem}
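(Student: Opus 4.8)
The plan is to deduce Theorem~\ref{theorem:nspheresredcon} from Theorem~\ref{theorem:nspherescon} via a bad simplex argument (Proposition~\ref{proposition:avoidbadeasy}), taking $Y = \ANSpheres(M_{n,b},H)$ and $X = \NSpheres(M_{n,b},H)$, together with $d = n-1-\rank(H)$. By Theorem~\ref{theorem:nspherescon}, $X$ is $(d-1)$-connected, so if we show that the pair $(Y,X)$ is $d$-connected then the long exact sequence of the pair forces $Y$ to be $d$-connected as well (using $\pi_d(Y,X) = 0$ and $\pi_{d-1}(X) = \pi_{d}(X) = 0$, with the usual care about low-dimensional edge cases handled by the conventions in \S\ref{section:connectivityconventions}). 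So the whole content is to identify the right set $\cB$ of bad simplices and verify hypotheses (i)--(iii).

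The natural choice is to let $\cB$ be the set of separating cores of simplices of $Y$ that are nonempty; equivalently, a simplex $\sigma$ of $Y$ lies in $\cB$ if $\sigma$ is a single sphere $S$ that separates $M_{n,b}$ (into the basepoint-containing piece $X$ and a piece $Y'$ with $\pi_1(Y') = 1$), or more generally $\sigma$ consists of spheres all of which together separate off a simply-connected piece. First I would pin down the exact definition so that hypothesis (i) holds tautologically: a simplex of $Y$ lies in $X$ iff none of its faces separates $M_{n,b}$, which matches the definition of $\ANSpheres$ versus $\NSpheres$. For hypothesis (ii), if $\sigma, \sigma' \in \cB$ and $\sigma \cup \sigma'$ is a simplex of $Y$, then since both cut off simply-connected pieces and they are compatible sphere systems, the union still cuts off a simply-connected piece; this is a short argument using the dual graph $\Gamma$ (Lemma~\ref{lemma:dualgraph}) — the point is that $\rank \pi_1(\Gamma(\sigma \cup \sigma')) $ still accounts for all of $n - \rank(H)$, so no new handles are created. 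The main work is hypothesis (iii): for $\sigma \in \cB$, one must show $\cL(\sigma,\cB)$ is $(d - \dim(\sigma) - 1)$-connected.

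To handle (iii), I would analyze $\cL(\sigma,\cB)$ concretely. Writing the basepoint-containing component of the complement of $\sigma$ as $X_1 \cong M_{m, b'}$ with $m = n - \dim(\sigma) - 1$ handles redistributed appropriately (using the dual graph count: since $\sigma$ separates off a simply-connected ball region, all handles survive in $X_1$, so $m = n - \dim(\sigma) - 1$ when $\sigma$ is a single separating sphere, and one tracks $b'$ via the components), the link of $\sigma$ in $\Spheres$ decomposes as a join over the complementary components; the condition defining $\cL(\sigma,\cB)$ — that no face of $\sigma \ast \tau$ outside $\sigma$ is bad — should translate into $\tau$ being a nonseparating system in the appropriate pieces, i.e. $\cL(\sigma,\cB) \cong \NSpheres(X_1, H) \ast (\text{something contractible or empty on the trivial side})$. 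Then Theorem~\ref{theorem:nspherescon} applied to $X_1$ gives the needed connectivity: $\NSpheres(M_m, b', H)$ is $(m - \rank(H) - 2)$-connected $= (n - \dim(\sigma) - 1 - \rank(H) - 2)$-connected, and one checks this is $\geq d - \dim(\sigma) - 1 = n - 2 - \rank(H) - \dim(\sigma)$. These match, so the estimate is tight — which means I expect the delicate point to be getting the bookkeeping of handles and boundary components exactly right when $\sigma$ has dimension $> 0$, and correctly handling the degenerate cases ($\rank(H)$ close to $n$, small $b$, the $(n,b)=(1,1)$ type exclusions) where $\NSpheres$ of the residual manifold may be empty rather than highly connected.

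The step I expect to be the main obstacle is verifying hypothesis (iii) with the correct identification of $\cL(\sigma,\cB)$ — in particular, making sure that the "$\sigma \cup \sigma'$ a face forces $\sigma' \subset \sigma$" condition genuinely rules out exactly the separating extensions and nothing more, so that $\cL(\sigma,\cB)$ really is (a join involving) a nonseparating sphere complex to which Theorem~\ref{theorem:nspherescon} applies with the sharp bound. A secondary obstacle is the boundary case analysis: when the residual manifold $X_1$ is too small for Theorem~\ref{theorem:nspherescon} to give positive connectivity, one must check by hand (using Lemma~\ref{lemma:changeofcoordinates} and the dual graph) that $\cL(\sigma,\cB)$ is at least nonempty or $(-1)$-connected as the target dimension requires, mirroring the case analysis already carried out in the proof of Theorem~\ref{theorem:spherescm}.
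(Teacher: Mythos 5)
There is a genuine gap, and it sits exactly at the step you passed over quickly: the deduction of the theorem from the relative statement. Your bad-simplex argument (Proposition \ref{proposition:avoidbadeasy} with $\cB$ the simplices that are their own separating core) does work and shows that the pair $(\ANSpheres(M_{n,b},H),\NSpheres(M_{n,b},H))$ is $d$-connected for $d=n-1-\rank(H)$; this is precisely the paper's Lemma \ref{lemma:augsphere}. But you then invoke the long exact sequence ``using $\pi_d(Y,X)=0$ and $\pi_{d-1}(X)=\pi_d(X)=0$,'' and the assertion $\pi_d(X)=0$ is false. Theorem \ref{theorem:nspherescon} only gives that $\NSpheres(M_{n,b},H)$ is $(n-\rank(H)-2)=(d-1)$-connected; in fact this complex is $d$-dimensional and (for $b=1$, $H=1$, say) is homotopy equivalent to a nontrivial wedge of $d$-spheres, so $\pi_d(\NSpheres(M_{n,b},H))\neq 0$. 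From $\pi_k(Y,X)=0$ for $k\leq d$ you only get that $\pi_d(\ANSpheres)$ is a quotient of the image of $\pi_d(\NSpheres)$, i.e.\ your argument recovers $(d-1)$-connectivity of $\ANSpheres(M_{n,b},H)$ and nothing more. The entire content of the theorem is the extra degree of connectivity: one must show that the top-dimensional spheres of $\NSpheres$ become null-homotopic once the extra simplices of $\ANSpheres$ are added, and Proposition \ref{proposition:avoidbadeasy} cannot produce the required discs, since it pushes maps from the big complex into the small one rather than providing fillings in the big one.

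This missing step is where almost all of the paper's work goes. The paper proves the stronger Theorem \ref{theorem:anspheresbetter} (sphere \emph{and} disc local injectivity up to dimension $d$) and concludes via Lemma \ref{lemma:localinjcon}. The disc half requires: the contractibility and weak Cohen--Macaulayness of the complex $\SpheresNS(M_{n,b},H)$ of individually nonseparating spheres (Theorems \ref{theorem:spheresnscon} and \ref{theorem:spheresnscm}), the expanded complex $\ANSpheresEX(M_{n,b},H)$ with its disc local injectivity (Lemma \ref{lemma:anspheresurdisc}), and a downward induction along the filtration $\ANSpheresEX^m$ (Lemma \ref{lemma:augdisc}), all run through the harder relative result Proposition \ref{proposition:avoidbadhard}, whose proof genuinely needs locally injective representatives rather than mere connectivity (see the remark following it). None of this is replaceable by the easy bad-simplex argument you propose, so the proposal as written does not prove the theorem. (A small secondary point: hypothesis (ii) for your $\cB$ is not a dual-graph computation but is vacuous, since a simplex of $\ANSpheres(M_{n,b},H)$ separates into at most two pieces, forcing $\sigma=\sigma'$.)
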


\subsection{A stronger result}
Some degenerate cases of Theorem \ref{theorem:nspheresredcon} are immediate:
\begin{itemize}
\item If $b=1$ and $\rank(H) = n-1$, then by Lemma \ref{lemma:changeofcoordinates}
the complex $\ANSpheres(M_{n,b},H)$ consists of a single vertex
corresponding to a nonseparating curve.  It is thus contractible, and in particular is
$(n-1-\rank(H))=0$ connected.
\end{itemize}
We thus can exclude these cases.  In the remaining ones, we will actually prove the
following stronger result.

\begin{theorem}
\label{theorem:anspheresbetter}
For some $n,b \geq 1$, fix a basepoint $\ast \in \partial M_{n,b}$
and let $H < \pi_1(M_{n,b},\ast) \cong F_n$ be a free factor.  Assume
that $\rank(H) \leq n-1$ and that if $\rank(H) = n-1$, then $b \geq 2$.
Then $\ANSpheres(M_{n,b},H)$ has
the sphere and disc local injectivity properties up to dimension $(n-1-\rank(H))$.
\end{theorem}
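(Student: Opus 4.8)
The plan is to establish both local injectivity properties by induction on $n$, deducing them for $\ANSpheres(M_{n,b},H)$ from the corresponding properties of the ambient complex $\Spheres(M_{n,b},H)$ by means of the bad-simplex machinery of \S\ref{section:badsimplex}. First I would record the facts already available. Under the stated hypotheses $(n,b)\neq(1,1)$, so Theorem~\ref{theorem:spherescm} says $\Spheres(M_{n,b},H)$ is weakly Cohen--Macaulay of dimension $n-\rank(H)$; by Lemma~\ref{lemma:makeinjectivecm} it therefore has the sphere local injectivity property up to dimension $n-\rank(H)$ and the disc local injectivity property up to dimension $n-1-\rank(H)$. Likewise Theorem~\ref{theorem:nspherescm} and Lemma~\ref{lemma:makeinjectivecm} show that $\NSpheres(M_{n,b},H)$, and each of its links, has the disc local injectivity property up to dimension $n-2-\rank(H)$. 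Set $d=n-1-\rank(H)$. The base case $d=0$ (that is, $\rank(H)=n-1$ and $b\geq2$) asks, by Example~\ref{example:edgecases}, that $\ANSpheres(M_{n,b},H)$ be nonempty, connected and not a single point; this is verified by a change-of-coordinates argument on $M_{n,b}$ along the lines of Lemma~\ref{lemma:changeofcoordinates}, where the hypothesis $b\geq2$ is exactly what supplies a second vertex.

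For the disc local injectivity property I would invoke Proposition~\ref{proposition:avoidbadhard} with $Y=\Spheres(M_{n,b},H)$ and $X=\ANSpheres(M_{n,b},H)$, taking $\cB$ to be an appropriate family of ``bad'' sphere systems: those containing a sphere that separates $M_{n,b}$, together with those whose separating core already witnesses that no sphere system containing them can lie in $\ANSpheres$ (for instance, a separating core cutting $M_{n,b}$ into more than two pieces, or into two pieces the non-basepoint one of which is not simply connected). Verifying conditions (i) and (ii) is a matter of dual-graph bookkeeping---using Lemma~\ref{lemma:dualgraph} and the observation that passing to a face of a sphere system can only merge components of its complement---so that a sphere system lies in $\ANSpheres$ precisely when none of its faces is bad and the union of two bad systems forming a simplex is bad. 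The substance is condition (iii): for a bad simplex $\sigma$ one must produce $\hcL(\sigma,\cB)\supseteq\cL(\sigma,\cB)$ with $\partial\sigma\ast\hcL(\sigma,\cB)\subseteq Y$, with every bad simplex of $\partial\sigma\ast\hcL(\sigma,\cB)$ lying in $\partial\sigma$, and with $\hcL(\sigma,\cB)$ having the disc local injectivity property up to dimension $d-\dim(\sigma)$. I would construct $\hcL(\sigma,\cB)$ as a join, over the components $X_1,\dots,X_r$ of the complement of $\sigma$, of an appropriate sphere complex of each $X_i$---the full complex $\Spheres(X_i)$ (or $\Spheres(X_1,H)$ on the basepoint-containing piece, together with a simply-connected factor wherever the badness of $\sigma$ forces one) on the side that absorbs the defect, and an $\NSpheres$ or $\ANSpheres$ factor on the remaining pieces---chosen precisely so that joining with $\partial\sigma$ stays inside $Y$ and introduces no bad simplex off $\partial\sigma$. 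The disc local injectivity of this join up to dimension $d-\dim(\sigma)$ then follows from Lemma~\ref{lemma:discjoin}, fed by Lemma~\ref{lemma:makeinjectivecm} for the $\Spheres$ and $\NSpheres$ factors and by the inductive hypothesis for the $\ANSpheres$ factors (whose underlying manifolds have strictly smaller rank), the exponents in the join adding correctly because cutting along a rank-$k$ nonseparating system lowers $\rank(\pi_1)$ by exactly $k+1$.

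For the sphere local injectivity property I would argue directly, following Step~1 of the proof of Lemma~\ref{lemma:makeinjectivecm}. Given a continuous map $f\colon\bbS^k\to\ANSpheres(M_{n,b},H)$ with $k\leq d$, first make $f$ simplicial into $\ANSpheres$ by simplicial approximation (Remark~\ref{remark:ADD1}). If $f$ fails to be locally injective, choose a simplex $\sigma$ of $\bbS^k$ of maximal dimension with $\dim(f(\sigma))<\dim(\sigma)$ and set $\eta=f(\sigma)$; by maximality $f$ carries $\Link_{\bbS^k}(\sigma)\cong\bbS^{k-\dim(\sigma)-1}$ locally injectively into $\Link_{\ANSpheres(M_{n,b},H)}(\eta)$, and to eliminate $\sigma$ one needs $\Link_{\ANSpheres(M_{n,b},H)}(\eta)$ to have the disc local injectivity property up to dimension $k-\dim(\sigma)-1\leq d-\dim(\eta)-2$. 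This link consists of $\NSpheres$ of the complement of $\eta$ (covered by Lemma~\ref{lemma:makeinjectivecm} and Theorem~\ref{theorem:nspherescm}) enlarged by those simplices of $\Spheres$ of the complement whose union with $\eta$ has a separating core cutting off a simply-connected basepoint-free piece; identifying it as a complex assembled by joins and cones from $\NSpheres$ and $\ANSpheres$ of manifolds of smaller rank, one obtains the required bound from Lemma~\ref{lemma:discjoin} and the inductive hypothesis. Iterating this push-off finitely many times makes $f$ locally injective, completing the induction; by Lemma~\ref{lemma:localinjcon} this also yields Theorem~\ref{theorem:nspheresredcon}.

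The main obstacle is condition (iii) of Proposition~\ref{proposition:avoidbadhard}, together with the twin analysis of the links $\Link_{\ANSpheres}(\eta)$ used for sphere local injectivity. In both places the crux is the same delicate point: understanding how the separating core of a sphere system---and the requirement that it cut $M_{n,b}$ into exactly two complementary pieces, with the non-basepoint piece simply connected---interacts with passing to faces, to links, and to joins with an auxiliary bad link. Identifying the correct $\hcL(\sigma,\cB)$ for each type of bad simplex, and checking that its simplices genuinely produce no new bad simplices outside $\partial\sigma$, is where essentially all of the work lies; once the combinatorics of separating cores is under control, the Cohen--Macaulay inputs and the join lemma assemble the local injectivity bounds almost mechanically.
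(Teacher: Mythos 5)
There is a genuine gap: your plan names the right machinery (Propositions \ref{proposition:avoidbadeasy} and \ref{proposition:avoidbadhard}, Lemma \ref{lemma:discjoin}, the Cohen--Macaulay inputs), but the entire content of the theorem sits in condition (iii) of Proposition \ref{proposition:avoidbadhard}, and your single-step descent from $Y=\Spheres(M_{n,b},H)$ to $X=\ANSpheres(M_{n,b},H)$ does not supply a workable $\hcL(\sigma,\cB)$ --- you yourself defer it (``where essentially all of the work lies''). Concretely, the natural candidates fail condition (iii).b. Take the bad simplex $\sigma$ to be a single separating sphere, with complementary pieces $X_1$ (basepoint side) and $X_2$, both of positive rank, and try $\hcL(\sigma,\cB)=\ANSpheres(X_1,H)\ast\ANSpheres(X_2)$ (or any variant built from $\ANSpheres$/$\NSpheres$ factors of the pieces): a join of a simplex $\tau_1$ whose separating core cuts $X_1$ into two pieces with a simplex $\tau_2$ doing the same in $X_2$ typically has a separating core in $M_{n,b}$ with three complementary components, hence contains a bad face not in $\partial\sigma$. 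There are also unresolved issues with closing your mixed bad set (separating vertices together with defective separating cores) under unions so that condition (ii) holds while $\cL(\sigma,\cB)$ stays computable. The paper's proof is structured precisely to avoid these problems, and none of its three key devices appear in your sketch: (1) the auxiliary complex $\SpheresNS(M_{n,b},M,H)$ of spheres that are nonseparating in an ambient manifold, proved contractible (Theorem \ref{theorem:spheresnscon}) and weakly Cohen--Macaulay (Theorem \ref{theorem:spheresnscm}) --- this is where the hypothesis $b\geq 2$ when $\rank(H)=n-1$ is really used; (2) the expanded complex $\ANSpheresEX(M_{n,b},H)$, reached from $\SpheresNS$ by one application of Proposition \ref{proposition:avoidbadhard} whose bad simplices are separating cores with at least three complementary pieces (Lemma \ref{lemma:anspheresurdisc}); and (3) the filtration $\ANSpheresEX^m(M_{n,b},H)$ interpolating down to $\ANSpheres(M_{n,b},H)$, handled by downward induction on $m$ with the strictly enlarged link $\hcL(\sigma,\cB)=\NSpheres(X,H)\ast\ANSpheresEX(Y)\supsetneq\cL(\sigma,\cB)$ (Lemma \ref{lemma:augdisc}). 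Without some substitute for this staging, the bad-simplex argument does not close up.

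Your treatment of the sphere local injectivity half also overreaches: the push-off argument you describe needs the links of simplices of $\ANSpheres(M_{n,b},H)$ to have the disc local injectivity property, and these links are not obviously joins of smaller $\NSpheres$/$\ANSpheres$ complexes --- the separating-core condition couples spheres lying in different complementary components, and the paper explicitly remarks that it does not know whether $\ANSpheres(M_{n,b},H)$ is weakly Cohen--Macaulay. The paper's route is both simpler and avoids this: it shows the inclusion $\NSpheres(M_{n,b},H)\hookrightarrow\ANSpheres(M_{n,b},H)$ is $(n-1-\rank(H))$-connected by the easy bad-simplex argument (Lemma \ref{lemma:augsphere}, using Theorem \ref{theorem:nspherescon}), and then imports sphere local injectivity from the weakly Cohen--Macaulay complex $\NSpheres(M_{n,b},H)$ via Lemma \ref{lemma:makeinjectivecm}.
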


By Lemma \ref{lemma:localinjcon}, this will imply that $\ANSpheres(M_{n,b},H)$ is $(n-1-\rank(H))$-connected.

\begin{remark}
Theorem \ref{theorem:anspheresbetter} is weaker than saying that these complexes
are weakly Cohen--Macaulay.  We do not know if this strong condition holds.
\end{remark}

We divide the proof of Theorem \ref{theorem:anspheresbetter} into two parts: in the rest
of this section, we prove sphere local injectivity (see Lemma \ref{lemma:augsphere}), and
in \S \ref{section:augmenteddiscunred} -- \S \ref{section:augmenteddisc} we prove disc local injectivity
(see Lemma \ref{lemma:augdisc}).

\subsection{Sphere local injectivity}
The following result takes care of the sphere local injectivity part of
Theorem \ref{theorem:anspheresbetter}.  We note that the hypothesis
that $b \geq 2$ if $\rank(H) = n-1$ is not needed here, but will be
needed for the disc local injectivity property.

\begin{lemma}
\label{lemma:augsphere}
For some $n,b \geq 1$, fix a basepoint $\ast \in \partial M_{n,b}$
and let $H < \pi_1(M_{n,b},\ast) \cong F_n$ be a free factor.  Assume
that $\rank(H) \leq n-1$.
Then $\ANSpheres(M_{n,b},H)$ has
the sphere local injectivity property up to dimension $(n-1-\rank(H))$.
\end{lemma}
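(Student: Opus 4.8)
The goal is to show that $\ANSpheres(M_{n,b},H)$ has the sphere local injectivity property up to dimension $(n-1-\rank(H))$. Write $X = \ANSpheres(M_{n,b},H)$ and recall that $X$ sits inside $Y = \Spheres(M_{n,b},H)$, which by Theorem~\ref{theorem:spherescm} is weakly Cohen--Macaulay of dimension $n-\rank(H)$ (when $(n,b)\neq(1,1)$; the degenerate case can be handled by hand using Lemma~\ref{lemma:changeofcoordinates}, where $X$ is just a vertex). By Lemma~\ref{lemma:makeinjectivecm}, $Y$ has the sphere local injectivity property up to dimension $n-\rank(H)$, in particular up to dimension $(n-1-\rank(H))$.

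\textbf{The plan.} The plan is to take a continuous map $f\colon \bbS^k \to X$ with $k \leq n-1-\rank(H)$, view it as a map into $Y$, and use the sphere local injectivity property of $Y$ to homotope it (inside $Y$) to a simplicial locally injective map for some combinatorial triangulation of $\bbS^k$. The problem is that this homotopy might push the map out of $X$. So the real work is to show that a locally injective simplicial map $\bbS^k \to Y$ whose image happens to lie in $X$ can be homotoped, staying in $X$ and staying locally injective, so as to remove any failure of local injectivity — except that it already \emph{is} locally injective once we are in $X$. Wait: the subtlety is the reverse. Let me restructure: first apply simplicial approximation (Remark~\ref{remark:ADD1}) so $f$ is simplicial into $X$ for a combinatorial triangulation of $\bbS^k$; then, if $f$ is not locally injective, find a simplex $\sigma$ of $\bbS^k$ of maximal dimension with $\dim(f(\sigma)) < \dim(\sigma)$, and rebuild $f$ on $\sigma \ast \Link_{\bbS^k}(\sigma)$ using a disc filling inside an appropriate link complex — exactly as in the proof of Step~2 of Lemma~\ref{lemma:makeinjectivecm}. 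The key point is to identify the right complex in which to find this filling and to check it has the disc local injectivity property in the needed range \emph{and} that the filling, when joined back with $f(\sigma)$, lands in $X$ rather than merely in $Y$.

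\textbf{Key steps.} First, reduce to the case $f$ simplicial into $X$ by simplicial approximation. Second, suppose $f$ is not locally injective and pick $\sigma \subset \bbS^k$ of maximal dimension with $\dim(f(\sigma)) < \dim(\sigma)$; maximality forces $f(\Link_{\bbS^k}(\sigma)) \subset \Link_X(f(\sigma))$ and forces $f|_{\Link_{\bbS^k}(\sigma)}$ to be locally injective. Third — the crucial geometric input — analyze $\Link_X(f(\sigma))$: writing $\tau = f(\sigma)$ and letting the complement of $\tau$ have components $X_1,\dots,X_r$ with $X_1$ the basepoint-containing one, show that $\Link_X(\tau)$ is a join of augmented nonseparating sphere complexes of the pieces (something like $\ANSpheres(X_1,H) \ast \NSpheres(X_2) \ast \cdots$, with appropriate care about which factor carries the ``augmenting'' freedom, since $\tau$ is itself a simplex of $X$, so its separating core is empty or cuts off a simply-connected piece). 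Fourth, bound $\dim(\Link_X(\tau))$ from below in terms of the connectivity range: because $\dim(f(\sigma)) < \dim(\sigma)$ and $k \leq n-1-\rank(H)$, the relevant link complex has the disc local injectivity property in a range large enough to fill a $\bbD^{k - \dim(\sigma)}$ — this uses Theorem~\ref{theorem:anspheresbetter}'s disc half for the smaller pieces (valid inductively on $n$, or more safely Theorem~\ref{theorem:nspherescm} plus Lemma~\ref{lemma:makeinjectivecm} and Lemma~\ref{lemma:discjoin} for the join). Fifth, do the cut-and-paste replacement of $f|_{\sigma \ast \Link_{\bbS^k}(\sigma)}$ with $(f|_{\partial\sigma}) \ast G$ using the standard join identities recalled in Lemma~\ref{lemma:makeinjectivecm}, noting that the result still lands in $f(\sigma) \ast \Link_X(f(\sigma)) \subset X$ and eliminates $\sigma$; iterate.

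\textbf{Main obstacle.} The main obstacle is Step three and four: pinning down the homotopy type (and the disc local injectivity range) of $\Link_X(f(\sigma))$ where $X = \ANSpheres$. Unlike the $\Spheres$ or $\NSpheres$ cases, links in $\ANSpheres$ are not simply links of a full sphere complex; the augmenting condition on the separating core interacts with how $f(\sigma)$ subdivides the manifold, so one must carefully check that the link is again of the form needed and that its connectivity/injectivity range is at least $k - \dim(\sigma) - 1$ (for the sphere half) or $k - \dim(\sigma)$ (if disc filling is used). I would expect to set this up by a separate lemma computing $\Link_{\ANSpheres(M_{n,b},H)}(\tau)$ as a join of an augmented complex of the basepoint piece with ordinary nonseparating complexes of the other pieces, then invoke Lemma~\ref{lemma:discjoin} together with Theorem~\ref{theorem:nspherescm} and, for the augmented factor, induction on $n$. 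One should double-check the degenerate low-rank cases (e.g.\ $\rank(H) = n-1$, or pieces that are copies of $M_{0,b}$, where Remark~\ref{remark:genuszero} governs connectivity) separately, since the general connectivity bound can fail there but the dimension constraint $k \leq n-1-\rank(H)$ typically makes those cases vacuous or trivial.
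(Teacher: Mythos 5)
Your proposal takes a genuinely different route from the paper, and unfortunately it runs into a difficulty that the paper's argument is specifically designed to sidestep. The paper does not try to analyze links in $\ANSpheres(M_{n,b},H)$ at all: instead it proves that the inclusion $\NSpheres(M_{n,b},H) \hookrightarrow \ANSpheres(M_{n,b},H)$ is $(n-1-\rank(H))$-connected via Proposition~\ref{proposition:avoidbadeasy} (the bad simplex argument, with bad simplices being those equal to their own separating core), and then transfers the sphere local injectivity property across that inclusion from $\NSpheres(M_{n,b},H)$, which is already known to be weakly Cohen--Macaulay of dimension $(n-1-\rank(H))$ by Theorem~\ref{theorem:nspherescm}. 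Your plan, by contrast, is to run the proof of Lemma~\ref{lemma:makeinjectivecm} \emph{directly} on $X=\ANSpheres(M_{n,b},H)$, which requires understanding $\Link_X(\tau)$ and establishing that it has the disc local injectivity property in a suitable range.

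The first gap is precisely there: $\Link_{\ANSpheres(M_{n,b},H)}(\tau)$ is \emph{not} a simple join of complexes on the complementary pieces, because the augmenting condition ($\sigma$ must have a separating core cutting off at most one simply connected piece) couples the spheres of $\tau$ to the spheres of $\tau'$ globally; which spheres of $\tau\ast\tau'$ belong to the separating core, and what the complement of the separating core looks like, depends on $\tau'$, not just on the pieces cut out by $\tau$. Proving that these links have good disc local injectivity is essentially as hard as proving $\ANSpheres$ is weakly Cohen--Macaulay, and the paper explicitly remarks after Theorem~\ref{theorem:anspheresbetter} that it does \emph{not} know whether $\ANSpheres(M_{n,b},H)$ is weakly Cohen--Macaulay. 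The second gap is a circularity concern: at one point you propose to invoke ``Theorem~\ref{theorem:anspheresbetter}'s disc half'' for the pieces, but that theorem is exactly what Lemma~\ref{lemma:augsphere} is one half of, and no induction scheme is laid out that would make this non-circular. (The paper avoids this by using the strictly smaller complexes $\NSpheres$ and $\SpheresNS$ as the ``known'' inputs to all the bad simplex arguments, and only gets to $\ANSpheres$ at the end.) Your fallback suggestion (Theorem~\ref{theorem:nspherescm} plus Lemmas~\ref{lemma:makeinjectivecm} and~\ref{lemma:discjoin}) would only work if the link really were a join of $\NSpheres$-type complexes, which it isn't for the reason above. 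The net effect is that the proposal identifies the right skeleton of a local-injectivity argument but does not have a workable way to carry out the crucial link analysis; the paper's ``push into $\NSpheres$ via bad simplices'' approach is both simpler and avoids the problematic links entirely.
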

\begin{proof}
Theorem \ref{theorem:nspherescm} says that $\NSpheres(M_{n,b},H)$ is weakly Cohen--Macaulay of dimension
$(n-1-\rank(H))$, so by Lemma \ref{lemma:makeinjectivecm} it has the sphere local injectivity
property up to dimension $(n-1-\rank(H))$.  It follows that it is enough to prove that
for $k \leq n-1-\rank(H)$, every map $\bbS^k \rightarrow \ANSpheres(M_{n,b},H)$ is
homotopic to one whose image lies in $\NSpheres(M_{n,b},H)$.  Letting $d = n-1-\rank(H)$, this
is equivalent to proving that the inclusion
\[\NSpheres(M_{n,b},H) \hookrightarrow \ANSpheres(M_{n,b},H)\]
is $d$-connected.
This will follow from Proposition \ref{proposition:avoidbadeasy}
once we verify its hypotheses.
The input to Proposition \ref{proposition:avoidbadeasy} is a set $\cB$ of ``bad simplices'', which
for us will be as follows:
\begin{itemize}
\item The set $\cB$ consist of all simplices $\sigma = \{S_0,\ldots,S_k\}$ of $\ANSpheres(M_{n,b},H)$
such that the union of the $S_i$ separates $M_{n,b}$, but no proper subset
of $\sigma$ separates $M_{n,b}$.  In other words, $\sigma$ is its own separating core.
\end{itemize}
We now verify each hypothesis of Proposition \ref{proposition:avoidbadeasy} in turn.

Condition (i) says that a simplex of $\ANSpheres(M_{n,b},H)$ lies in $\NSpheres(M_{n,b},H)$ if
and only if none of its faces are in $\cB$, which is immediate from the definitions.

Condition (ii) says that if $\sigma,\sigma' \in \cB$ are such that $\sigma \cup \sigma'$ is a simplex
of $\ANSpheres(M_{n,b},H)$, then $\sigma \cup \sigma' \in \cB$.  In fact, since a simplex of
$\ANSpheres(M_{n,b},H)$ can separate $M_{n,b}$ into at most $2$ components, the only way
that $\sigma \cup \sigma'$ can be a simplex of $\ANSpheres(M_{n,b},H)$ is for $\sigma = \sigma'$,
so this condition is trivial.

Condition (iii) says that for all $\sigma \in \cB$, the complex
$\cL(\sigma,\cB)$ defined in Definition \ref{definition:badlink} is
\[d-\dim(\sigma)-1 = (n-1-\rank(H)) - \dim(\sigma) - 1 = n-2-\rank(H) - \dim(\sigma)\]
connected.  For our $\cB$, the complex $\cL(\sigma,\cB)$
has the following concrete description.  Let $X$ and $Y$ be the components
of the complement of $\sigma$, with $X$ the basepoint-containing component.  Since
$\pi_1(Y) = 1$, all $2$-spheres in $Y$ separate $Y$.  We thus have
\begin{equation}
\label{eqn:anspheresjoin}
\cL(\sigma,\cB) = \NSpheres(Y) \ast \NSpheres(X,H) = \NSpheres(X,H).
\end{equation}
Write $X \cong M_{n',b'}$.  Theorem \ref{theorem:nspherescon} says that $\cL(\sigma,\cB) = \NSpheres(X,H)$
is $(n'-2-\rank(H))$-connected, so to prove that it is $(n-2-\rank(H) - \dim(\sigma))$-connected
we must prove that $n = n' + \dim(\sigma)$.

The dual graph $\Gamma(\sigma)$ has two vertices corresponding to $X$ and $Y$ and $\dim(\sigma)+1$
edges, so
\[\pi_1(\Gamma(\sigma)) \cong F_{\dim(\sigma)}.\]
Lemma \ref{lemma:dualgraph} now says that
\[n = \rank(\pi_1(\Gamma)) + \rank(\pi_1(X)) + \rank(\pi_1(Y)) = \dim(\sigma) + n' + 0,\]
as desired.
\end{proof}

\section{The complex of nonseparating spheres \texorpdfstring{$\SpheresNS(M_{n,b},H)$}{Sns(M,H)}}
\label{section:spheresns}

Before we can prove disc local injectivity for the augmented nonseparating sphere complex,
we must study the subcomplex of the sphere complex where vertices are nonseparating, but
where higher-dimensional simplices can separate.

\subsection{Nonseparating spheres, absolute version}
The definition is as follows.

\begin{definition}
\label{definition:nonseparatingspheres}
For some $n \geq 0$ and $b \geq 1$, fix a basepoint $\ast \in \partial M_{n,b}$
and let $H < \pi_1(M_{n,b},\ast) \cong F_n$ be a free factor.  The {\em complex of
$H$-compatible nonseparating spheres} in $M_{n,b}$, denoted $\SpheresNS(M_{n,b},H)$, is 
the full subcomplex of $\Spheres(M_{n,b},H)$ whose vertices are the isotopy classes
of $H$-compatible essential nonseparating spheres in $M_{n,b}$.
If $H = 1$, then we will sometimes omit it from our notation and just
write $\SpheresNS(M_{n,b})$.
\end{definition}

\subsection{Relative version}
Our main theorem about $\SpheresNS(M_{n,b},H)$ is that it is weakly Cohen--Macaulay of dimension
$n-\rank(H)$, just like $\Spheres(M_{n,b},H)$ (c.f.\ Theorem \ref{theorem:spherescm}).  A technical
issue that will arise when studying links in $\SpheresNS(M_{n,b},H)$ is that a sphere can separate
a submanifold of a $3$-manifold without separating the whole manifold.  We thus make
the following definition:

\begin{definition}
For some $n \geq 0$ and $b \geq 1$, fix a basepoint $\ast \in \partial M_{n,b}$
and let $H < \pi_1(M_{n,b},\ast) \cong F_n$ be a free factor.  Let $M$ be
another connected 3-manifold with boundary such that $M_{n,b}$ is a submanifold of $M$.
The {\em complex of $H$-compatible $M$-nonseparating spheres} in $M_{n,b}$, denoted $\SpheresNS(M_{n,b},M,H)$, is the 
full subcomplex of $\Spheres(M_{n,b},H)$ whose vertices are the isotopy classes
of $H$-compatible essential spheres in $M_{n,b}$ that do not separate $M$.
If $H = 1$, then we will sometimes omit it from our notation and just
write $\SpheresNS(M_{n,b},M)$.
\end{definition}

\subsection{Contractibility.}
The following is the analogue for $\SpheresNS(M_{n,b},M,H)$ of Theorem \ref{theorem:spherescon}
for $\Spheres(M_{n,b},H)$:

\begin{theorem}
\label{theorem:spheresnscon}
For some $n,b \geq 1$, fix a basepoint $\ast \in \partial M_{n,b}$
and let $H < \pi_1(M_{n,b},\ast) \cong F_n$ be a free factor.  Assume
that $\rank(H) \leq n-1$, and let $M$ be another connected $3$-manifold
with boundary such that $M_{n,b}$ is a submanifold of $M$.  
Then $\SpheresNS(M_{n,b},M,H)$ is contractible.
\end{theorem}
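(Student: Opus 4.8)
The plan is to argue by induction on the pair $(n,b)$ ordered lexicographically, at each step comparing $\SpheresNS(M_{n,b},M,H)$ with the contractible complex $\Spheres(M_{n,b},H)$ of Theorem \ref{theorem:spherescon} by means of the bad-simplex criterion of Proposition \ref{proposition:avoidbadeasy}. For the base case $(n,b)=(1,1)$ we have $H=1$, and by Lemma \ref{lemma:changeofcoordinates} the complex $\Spheres(M_{1,1})$ is the single vertex corresponding to the unique essential sphere $s$ of $M_{1,1}$; since $M_{1,1}\setminus s$ is connected and contains $\partial M_{1,1}$, the sphere $s$ does not separate the ambient $M$, so $\SpheresNS(M_{1,1},M)=\{s\}$ is a point.

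For the inductive step, I apply Proposition \ref{proposition:avoidbadeasy} with $Y=\Spheres(M_{n,b},H)$, with $X=\SpheresNS(M_{n,b},M,H)\subset Y$ (a full subcomplex), and with $\cB$ the set of nonempty simplices of $Y$ all of whose vertices are spheres that separate $M$. Conditions (i) and (ii) are immediate: $X$ is exactly the full subcomplex of $Y$ spanned by the spheres not separating $M$, and a union of $\cB$-simplices, when it is again a sphere system, still has only $M$-separating vertices. The content is condition (iii). Fix $\sigma\in\cB$. A sphere that separates $M$ also separates $M_{n,b}$, so the complement of $\sigma$ has components $X_1,\dots,X_r$ with $r\ge 2$; write $X_i\cong M_{n_i,b_i}$ with $X_1$ the basepoint-containing component. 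Using the join decomposition of links in the sphere complex exactly as in the proof of Theorem \ref{theorem:spherescm}, together with the observation that ``does not separate $M$'' is a condition on each vertex separately, one identifies
\[
\cL(\sigma,\cB)=\SpheresNS(X_1,M,H)\ast\SpheresNS(X_2,M)\ast\cdots\ast\SpheresNS(X_r,M).
\]
Hence it is enough to exhibit one contractible join factor, since a join containing a contractible factor is contractible.

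By Lemma \ref{lemma:dualgraph} we have $n=\rank(\pi_1(\Gamma(\sigma)))+\sum_i n_i\ge\sum_i n_i$, and $\rank(H)\le n_1$ because $H\subset\pi_1(X_1)$. If $\rank(H)\le n_1-1$, apply the inductive hypothesis to $\SpheresNS(X_1,M,H)$; if $\rank(H)=n_1$, then $\sum_{i\ge 2}n_i=n-\rank(H)\ge 1$, so some non-basepoint $X_i$ has positive genus and the inductive hypothesis applies to $\SpheresNS(X_i,M)$. In either case the chosen component satisfies the hypotheses of the theorem, and one gets a contractible factor, provided that component has strictly smaller index in the lexicographic order. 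This is automatic when its genus is $<n$; when its genus equals $n$, the identity $n=\rank(\pi_1(\Gamma(\sigma)))+\sum_i n_i$ forces $\Gamma(\sigma)$ to be a tree (so it has $r-1$ edges) and forces every other complementary component to have genus $0$, and then a counting argument shows its number of boundary spheres is strictly less than $b$: one uses that a genus-$0$ complementary piece has at least three boundary spheres (otherwise it would be a ball, a product region, or an isolated vertex of the dual graph, all impossible here since the spheres of $\sigma$ are essential and distinct and $\Gamma(\sigma)$ is connected with $r\ge 2$) together with $\sum_{v}\deg_{\Gamma(\sigma)}(v)=2(r-1)$. Thus $\cL(\sigma,\cB)$ is contractible, so hypothesis (iii) holds for every value of the connectivity parameter, and Proposition \ref{proposition:avoidbadeasy} gives $\pi_k(Y,X)=0$ for all $k\ge 0$; since $Y$ is contractible, $X$ is weakly contractible, hence (being a simplicial complex) contractible.

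The expected main obstacle is precisely the well-foundedness bookkeeping in the last paragraph: when $\sigma$ cuts off only genus-$0$ pieces the genus $n$ fails to decrease, which is why the induction must also track the number $b$ of boundary spheres and why one needs the structural facts that the dual graph of an $M$-separating system is a forest and that genus-$0$ complementary regions carry at least three boundary spheres. The secondary technical point is the precise identification of $\cL(\sigma,\cB)$ with the displayed join, which rests on the standard join decomposition of links in $\Spheres(M_{n,b},H)$ and on keeping careful track of which complementary pieces are allowed to contain $H$ and which boundary spheres of $M_{n,b}$ they inherit.
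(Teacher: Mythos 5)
Your proof is correct and follows essentially the same strategy as the paper's: induction on $(n,b)$ lexicographically with base case $(1,1)$, the bad-simplex criterion Proposition~\ref{proposition:avoidbadeasy} applied to $\Spheres(M_{n,b},H) \supset \SpheresNS(M_{n,b},M,H)$ with $\cB$ the simplices all of whose vertices separate $M$, the same join decomposition
\[
\cL(\sigma,\cB)=\SpheresNS(X_1,M,H)\ast\SpheresNS(X_2,M)\ast\cdots\ast\SpheresNS(X_r,M),
\]
and the same dichotomy on which factor to contract via the inductive hypothesis. The one place your write-up adds material is the well-foundedness check: where the paper simply asserts that a complementary piece of genus $n$ must have fewer than $b$ boundary spheres, you spell out the counting argument using that $\Gamma(\sigma)$ is a tree (every sphere in $\sigma$ separates $M_{n,b}$), that each genus-$0$ piece has at least three boundary spheres, and that $\sum_j b_j = b + 2(r-1)$, giving $b_i \le b - (r-1) < b$; it would be cleaner to record the ``$\Gamma(\sigma)$ is a tree'' observation at the start of the inductive step, since you already invoke $\sum_{i\ge 2} n_i = n - \rank(H)$ before stating it.
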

\begin{proof}
The proof will be by induction on the pair $(n,b)$, ordered lexicographically.  For the base case $(n,b) = (1,1)$, our
assumption on $H$ means that $H = 1$.  By Lemma \ref{lemma:changeofcoordinates}, the
complex
\[\SpheresNS(M_{1,1},M) = \Spheres(M_{1,1})\]
is a single vertex represented by a nonseparating sphere, and is thus contractible.  

Assume, therefore, that $(n,b) \neq (1,1)$ and that the theorem is true whenever $(n,b)$ is smaller.
Theorem \ref{theorem:spherescon} says that $\Spheres(M_{n,b},H)$ is contractible,
so it is enough to prove that the inclusion 
\[\SpheresNS(M_{n,b},M,H) \subset \Spheres(M_{n,b},H)\]
is $d$-connected for all $d \geq 0$.  This will follow from Proposition \ref{proposition:avoidbadeasy}
once we verify its hypotheses.
The input to Proposition \ref{proposition:avoidbadeasy} is a set $\cB$ of ``bad simplices'', which
for us will be as follows:
\begin{itemize}
\item Let $\cB$ be the set of all simplices $\sigma$ of $\Spheres(M_{n,b},H)$ such that
each vertex of $\sigma$ separates the ambient manifold $M$ (and thus is not a vertex of
$\SpheresNS(M_{n,b},M,H)$).
\end{itemize}
We now verify each hypothesis of Proposition \ref{proposition:avoidbadeasy} in turn.

Condition (i) says that a simplex of $\Spheres(M_{n,b},H)$ lies in $\SpheresNS(M_{n,b},M,H)$ if
and only if none of its faces are in $\cB$, which is immediate from the definitions.

Condition (ii) says that if $\sigma,\sigma' \in \cB$ are such that $\sigma \cup \sigma'$ is a simplex
of $\Spheres(M_{n,b},H)$, then $\sigma \cup \sigma' \in \cB$.  Again, this is immediate from
the definitions.

Condition (iii) says that for all $\sigma \in \cB$, the complex
$\cL(\sigma,\cB)$ defined in Definition \ref{definition:badlink} is $(d-\dim(\sigma)-1)$-connected
for all $d \geq 0$, i.e., is contractible.  For our $\cB$, the complex $\cL(\sigma,\cB)$
has the following concrete description.  Let $X_1,\ldots,X_r$ be the components of the
complement of $\sigma$ in $M_{n,b}$, with $X_1$ the basepoint-containing component.  We
then have
\begin{equation}
\label{eqn:spheresnsjoin}
\cL(\sigma,\cB) = \SpheresNS(X_1,M,H) \ast \SpheresNS(X_2,M) \ast \cdots \ast \SpheresNS(X_r,M).
\end{equation}
To prove that $\cL(\sigma,\cB)$ is contractible, it is enough to prove that at least one
term in this join is contractible.

Write $X_i \cong M_{n_i,b_i}$.  Recall that we are inducting on the pair $(n,b)$, ordered
lexicographically.  Since $X_i$ is a submanifold of $M_{n,b}$, we must have $n_i \leq n$.  Moreover,
if $n_i = n$ then since at least one component of $\partial X_i$ is an essential 
separating\footnote{In fact,
it has to separate the ambient manifold $M$, which is even stronger.}
$2$-sphere in $M_{n,b}$ and none of the components of $\partial X_i$ bound
balls in $M_{n,b}$, it follows that we must have $b_i < b$.  In other words, $(n_i,b_i)$ is
strictly less than $(n,b)$ in the lexicographic ordering.

This might lead the reader to think that our inductive hypothesis applies to each
$X_i$, and thus that all the terms in \eqref{eqn:spheresnsjoin} are contractible.  However,
there is an issue: this only works if the corresponding complex satisfies
the hypotheses of our theorem, and this might not hold.  Since we only need one
term in \eqref{eqn:spheresnsjoin} to be contractible, it is enough to prove
that one of the following two things hold:
\begin{itemize}
\item[$(\dagger)$] For some $2 \leq i \leq r$, we have $n_i \geq 1$.  Then the term
\[\SpheresNS(X_i,M) \cong \SpheresNS(M_{n_i,b_i},M)\] 
of \eqref{eqn:spheresnsjoin} satisfies the the hypothesis
of our theorem, so by our inductive hypothesis is contractible.
\item[$(\dagger\dagger)$] We have $n_1 \geq 1$ and $\rank(H) \leq n_1-1$.  Then the term
\[\SpheresNS(X_1,M,H) \cong \SpheresNS(M_{n_1,b_1},M,H)\] 
of \eqref{eqn:spheresnsjoin} satisfies the the hypothesis
of our theorem, so by our inductive hypothesis is contractible.
\end{itemize}
Assume that $(\dagger)$ does not hold, so $n_i = 0$ for $2 \leq i \leq r$.  We will prove that $(\dagger\dagger)$ then holds.
Since each sphere in $\sigma$ separates the manifold $M$, it also separates $M_{n,b}$.  The
dual graph $\Gamma(\sigma)$ is thus a tree.  Lemma \ref{lemma:dualgraph} therefore says that
\[n = \rank(\pi_1(\Gamma(\sigma))) + \sum_{i=1}^r n_i = 0 + n_1 + \sum_{i=2}^r 0 = n_1.\]
We thus have $n_1 = n$, so $n_1 \geq 1$ and $\rank(H) \leq n_1-1$ by assumption and
$(\dagger\dagger)$ holds, as desired.
\end{proof}

\subsection{Cohen--Macaulay}
Using Theorem \ref{theorem:spheresnscon}, we can prove the following.

\begin{theorem}
\label{theorem:spheresnscm}
For some $n,b \geq 1$, fix a basepoint $\ast \in \partial M_{n,b}$
and let $H < \pi_1(M_{n,b},\ast) \cong F_n$ be a free factor.  Assume 
that $\rank(H) \leq n-1$, and if $\rank(H) = n-1$ then assume that $b \geq 2$.
Then $\SpheresNS(M_{n,b},H)$ is weakly Cohen--Macaulay of dimension $n-\rank(H)$.
\end{theorem}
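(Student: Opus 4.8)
The plan is to follow the proof of Theorem~\ref{theorem:spherescm} almost verbatim, replacing the absolute complexes that occur in the analysis of links by the relative complexes $\SpheresNS(-,M_{n,b},-)$. Set $d = n-\rank(H)$. Taking $M = M_{n,b}$ in Theorem~\ref{theorem:spheresnscon} shows that $\SpheresNS(M_{n,b},H)$ is contractible, hence $(d-1)$-connected, so the real content is the statement about links: for every $k$-simplex $\sigma$ of $\SpheresNS(M_{n,b},H)$ one must show that $\Link_{\SpheresNS(M_{n,b},H)}(\sigma)$ is $(d-k-2)$-connected.

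The first step is to identify this link. Because $\SpheresNS(M_{n,b},H)$ is a \emph{full} subcomplex of $\Spheres(M_{n,b},H)$, if $X_1,\dots,X_r$ are the components of the complement of $\sigma$ with $X_1$ the basepoint-containing one, then
\[
\Link_{\SpheresNS(M_{n,b},H)}(\sigma) = \SpheresNS(X_1,M_{n,b},H) \ast \SpheresNS(X_2,M_{n,b}) \ast \cdots \ast \SpheresNS(X_r,M_{n,b}).
\]
This is exactly the setting the relative complexes were introduced for, since a sphere inside some $X_i$ can separate $X_i$ without separating the ambient $M_{n,b}$. Writing $X_i \cong M_{n_i,b_i}$ and applying Theorem~\ref{theorem:spheresnscon} with ambient manifold $M_{n,b}$, the $i$-th factor is contractible as soon as $\rank(H) \le n_1-1$ (for $i=1$) or $n_i \ge 1$ (for $i \ge 2$); if any factor is contractible then so is the whole join and we are done. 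Since $\rank(H)\le n_1$ always (just as in the proof of Theorem~\ref{theorem:spherescm}, because $H$ sits inside the free factor $\pi_1(X_1)$), the only remaining case is $n_1 = \rank(H)$ and $n_i = 0$ for all $i\ge 2$. There, Lemma~\ref{lemma:dualgraph} applied to the connected dual graph $\Gamma(\sigma)$ with $r$ vertices and $k+1$ edges gives $n = (k+2-r) + \rank(H)$, i.e.\ $d-k-2 = -r \le -1$; and if $r \ge 2$ then $d-k-2 \le -2$ and there is nothing to prove, so only $r=1$ survives.

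The main obstacle is this last case. Here $\Link(\sigma) = \SpheresNS(X_1,M_{n,b},H)$ with $X_1 \cong M_{n_1,b_1}$, $n_1=\rank(H)$ (so $H=\pi_1(X_1)$, as $H$ is a free factor of full rank), and $b_1 = b+2k+2$; we must produce a single essential $H$-compatible sphere in $X_1$ that is nonseparating in $M_{n,b}$ and isotopic to none of the $S_i$. I would first note that $b_1 \ge 4$: this is automatic when $k \ge 1$, and when $k=0$ one has $\rank(H) = n-1$, so the hypothesis forces $b \ge 2$ and $b_1 = b+2 \ge 4$ --- this is the only point at which the assumption ``$b \ge 2$ when $\rank(H) = n-1$'' is used. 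Since $X_1$ then has at least four boundary spheres, among them both copies $S_0',S_0''$ of $S_0$, the basepoint sphere, and one more sphere $Q$, one can take a regular neighborhood $Y \cong M_{0,3}$ of $S_0' \cup Q$ together with a connecting arc, with $\partial Y = S_0' \sqcup Q \sqcup T$ and $T$ a sphere in the interior of $X_1$. Cutting $X_1$ along $T$ yields $Y$ and a piece $X_1'$ containing the basepoint with $\pi_1(X_1') = \pi_1(X_1)$ (van Kampen, since $Y$ is simply connected), so $T$ is $H$-compatible; because $Y$ contains $S_0'$ but not $S_0''$, the self-identification $S_0' \sim S_0''$ in $M_{n,b}$ reconnects the complement of $T$, so $T$ is nonseparating, hence essential, in $M_{n,b}$; and since both $Y$ and $X_1'$ have at least three boundary spheres, $T$ is not boundary-parallel in $X_1$, hence (disjoint isotopic spheres cobound a product region) is isotopic to none of the $S_i$. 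Thus $T$ is a vertex of $\SpheresNS(X_1,M_{n,b},H)$, which is therefore nonempty, and the verification of weak Cohen--Macaulayness is complete.
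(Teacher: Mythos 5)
Your proposal is correct and takes essentially the same approach as the paper: both adapt the proof of Theorem \ref{theorem:spherescm} by invoking Theorem \ref{theorem:spheresnscon}, identifying $\Link_{\SpheresNS(M_{n,b},H)}(\sigma)$ with the join $\SpheresNS(X_1,M_{n,b},H) \ast \SpheresNS(X_2,M_{n,b}) \ast \cdots \ast \SpheresNS(X_r,M_{n,b})$, and reducing to the one-complementary-component case where the hypothesis ``$b \geq 2$ when $\rank(H)=n-1$'' guarantees at least four boundary spheres. The only difference is expository: you spell out explicitly the construction of the $H$-compatible sphere cutting off two boundary spheres and its nonseparation in $M_{n,b}$, which the paper compresses into one sentence plus a footnote.
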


\begin{remark}
The conditions that $\rank(H) \leq n-1$ and that if $\rank(H) = n-1$ then $b \geq 2$ are necessary.  Indeed, if $\rank(H) = n$ then
$\SpheresNS(M_{n,1},H)$ is the empty set, and thus is not weakly Cohen--Macaulay of dimension $0$.  Similarly,
if $\rank(H) = n-1$ then Lemma \ref{lemma:changeofcoordinates} implies that the
complex $\SpheresNS(M_{n,1})$ is a single point.  Thus while it is contractible (and hence connected), it
is not weakly Cohen--Macaulay of dimension $1$.
\end{remark}

\begin{proof}[Proof of Theorem \ref{theorem:spheresnscm}]
The proof is almost identical to that of Theorem \ref{theorem:spherescm}, so we just
indicate the necessary changes:
\begin{itemize}
\item Use Theorem \ref{theorem:spheresnscon} in place of Theorem \ref{theorem:spherescon}.
\item The ``relative'' version of our complex arises as follows.  Consider a simplex $\sigma$
of $\SpheresNS(M_{n,b},H)$.  We wish to understand the link of $\sigma$.  Let
the components of the complement of $\sigma$ be $X_1,\ldots,X_r$ with $X_1$ the basepoint-containing component.
We then have
\[\Link_{\SpheresNS(M_{n,b},H)}(\sigma) = \SpheresNS(X_1,M_{n,b},H) \ast \Spheres(X_2,M_{n,b}) \ast \cdots \ast \Spheres(X_r,M_{n,b})\]
since vertices of the link must not separate $M_{n,b}$ (though they can separate the $X_i$).
\item Finally, the last paragraph of the proof must be adjusted to ensure that the sphere
that arises does not separate $M_{n,b}$.  The key point here is that the final sentence (where
$b=1$ and $k=0$) is not needed due to our assumption that $b \geq 2$ if $\rank(H) = n-1$.\qedhere
\end{itemize}
\end{proof}

\section{The augmented nonseparating complex of spheres \texorpdfstring{$\ANSpheres(M_{n,b},H)$}{AN(M,H)} II: expanded disc local injectivity}
\label{section:augmenteddiscunred}

We now begin studying disc local injectivity for the augmented nonseparating complex of spheres.  The
first step is to establish this for a slightly larger complex.

\subsection{Basic definition}
The definition of our complex is as follows.

\begin{definition}
For some $n \geq 0$ and $b \geq 1$, fix a basepoint $\ast \in \partial M_{n,b}$
and let $H < \pi_1(M_{n,b},\ast) \cong F_n$ be a free factor.  The {\em expanded augmented nonseparating complex of
$H$-compatible spheres} in $M_{n,b}$, denoted $\ANSpheresEX(M_{n,b},H)$, is
the subcomplex of $\Spheres(M_{n,b},H)$ whose $k$-simplices are $H$-compatible rank-$k$ sphere
systems $\{S_0,\ldots,S_k\}$ in $M_{n,b}$ with the following properties:
\begin{itemize}
\item Each $S_i$ is a nonseparating sphere.
\item The union $S_0 \cup \cdots \cup S_k$ either does not separate $M_{n,b}$, or separates
it into exactly two components. \qedhere 
\end{itemize}
\end{definition}

We thus have $\ANSpheres(M_{n,b},H) \subset \ANSpheresEX(M_{n,b},H)$.

\subsection{Disc local injectivity property}
Lemma \ref{lemma:augsphere} says that $\ANSpheres(M_{n,b},H)$ has the sphere
local injectivity property up to dimension $(n-1-\rank(H))$ as long as $\rank(H) \leq n-1$.
The same is true for $\ANSpheresEX(M_{n,b},H)$, with a very similar proof.
We will not need this, but we will need the disc local injectivity property:

\begin{lemma}
\label{lemma:anspheresurdisc}
For some $n,b \geq 1$, fix a basepoint $\ast \in \partial M_{n,b}$
and let $H < \pi_1(M_{n,b},\ast) \cong F_n$ be a free factor.  Assume
that $\rank(H) \leq n-1$ and that if $\rank(H) = n-1$, then $b \geq 2$.
Then $\ANSpheresEX(M_{n,b},H)$ has
the disc local injectivity property up to dimension $(n-1-\rank(H))$.
\end{lemma}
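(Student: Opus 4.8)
The plan is to deduce this from Proposition~\ref{proposition:avoidbadhard}. As ambient complex I would take $Y = \SpheresNS(M_{n,b},H)$: under the hypotheses assumed here, Theorem~\ref{theorem:spheresnscm} says $Y$ is weakly Cohen--Macaulay of dimension $n-\rank(H)$ (this is the only place the condition ``$b \geq 2$ if $\rank(H)=n-1$'' gets used), so Lemma~\ref{lemma:makeinjectivecm} gives $Y$ the disc local injectivity property up to dimension $d := n-1-\rank(H)$, which is the input Proposition~\ref{proposition:avoidbadhard} needs. Since every vertex of $\ANSpheresEX(M_{n,b},H)$ is a nonseparating sphere, $X := \ANSpheresEX(M_{n,b},H)$ is a subcomplex of $Y$, and a simplex of $Y$ lies outside $X$ exactly when its spheres separate $M_{n,b}$ into three or more pieces. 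I would therefore take the bad simplices $\cB$ to be those simplices of $Y$ that separate $M_{n,b}$ into at least three components and equal their own separating core (i.e.\ have loopless dual graph): this is the ``at least three'' variant of the bad simplices used in the proof of Lemma~\ref{lemma:augsphere}.

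Conditions (i) and (ii) of Proposition~\ref{proposition:avoidbadhard} are routine dual-graph bookkeeping, using Lemma~\ref{lemma:dualgraph} together with the facts that cutting a $3$-manifold along one more sphere changes the number of components by $0$ or $+1$ and never turns a non-loop edge of the dual graph into a loop, while deleting a loop-sphere leaves the number of components unchanged. For (i), a simplex of $Y$ has a face in $\cB$ iff it separates $M_{n,b}$ into $\geq 3$ pieces---delete its loop-spheres to obtain such a face---which is exactly the complement of $X$. For (ii), if $\sigma,\sigma' \in \cB$ and $\sigma \cup \sigma'$ is a simplex of $Y$, then $\sigma \cup \sigma' \supseteq \sigma$ is $\geq 3$-separating and all of its spheres are still non-loops, so it is again its own separating core, hence lies in $\cB$.

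The substance is condition (iii). Fix $\sigma \in \cB$, let $X_1,\dots,X_r$ ($r \geq 3$) be the complementary components with $X_1 \ni \ast$ and $X_i \cong M_{n_i,b_i}$, and recall from Lemma~\ref{lemma:dualgraph} that $n = \rank(\pi_1(\Gamma(\sigma))) + \sum_i n_i$ with $\rank(\pi_1(\Gamma(\sigma))) = \dim(\sigma) + 2 - r$. I claim that the auxiliary complex $\cL(\sigma,\cB)$ is precisely the join $\NSpheres(X_1,H) \ast \NSpheres(X_2) \ast \cdots \ast \NSpheres(X_r)$: membership in $\cL(\sigma,\cB)$ forces the part of a sphere system lying in each $X_i$ to be nonseparating within $X_i$, since otherwise that part contains a sub-sphere-system whose union with $\sigma$ is a loopless ($\geq 3$-separating, as $r\geq 3$) simplex, i.e.\ a bad one, that is not a face of $\sigma$. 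I would then simply take $\hcL(\sigma,\cB) = \cL(\sigma,\cB)$. Property (iii).a is immediate because $\partial\sigma \ast \cL(\sigma,\cB) \subseteq \sigma \ast \Link_Y(\sigma) \subseteq Y$. For (iii).b: if $\tau \in \cL(\sigma,\cB)$ and $\sigma_0$ is any face of $\sigma$, then each sphere of $\tau$ is a loop in the dual graph of $\sigma_0 \ast \tau$---because $\tau \cap X_i$ is nonseparating in $X_i$ and gluing the $X_i$ back together along the spheres of $\sigma \setminus \sigma_0$ preserves this connectedness---so $\sigma_0 \ast \tau$ is never loopless, hence never bad, unless $\tau = \emptyset$; thus every bad simplex of $\partial\sigma \ast \hcL(\sigma,\cB)$ lies in $\partial\sigma$. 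Property (iii).c is a dimension count: by Theorem~\ref{theorem:nspherescm} each factor is weakly Cohen--Macaulay, so the join is weakly Cohen--Macaulay of dimension $\big(\sum_i n_i\big) - \rank(H) - 1 = n - \dim(\sigma) - \rank(H) + r - 3$, and Lemma~\ref{lemma:makeinjectivecm} then gives it the disc local injectivity property up to dimension $n - \dim(\sigma) - \rank(H) + r - 4$, which is $\geq d - \dim(\sigma)$ precisely because $r \geq 3$.

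The only genuine care needed is in that last dimension count, and it is worth seeing where it is tight. The inequality $r \geq 3$ is used crucially and is sharp at $r=3$; this is exactly why one proves the statement for the ``expanded'' complex $\ANSpheresEX$, whose bad simplices separate $M_{n,b}$ into $\geq 3$ pieces, rather than directly for $\ANSpheres$, whose bad simplices separate into only two pieces ($r=2$), where the same count would fall one dimension short and force one to genuinely enlarge $\hcL(\sigma,\cB)$ beyond $\cL(\sigma,\cB)$. Finally, a handful of degenerate configurations have to be dispatched by hand, as in the proof of Theorem~\ref{theorem:spherescm}: if some $n_i = 0$ (or $n_1 = \rank(H)$), the corresponding $\NSpheres$-factor is the empty complex and simply drops out of the join without changing the dimension formula, and in the most degenerate case where $\cL(\sigma,\cB)$ is empty the required dimension $d - \dim(\sigma)$ works out to be $\leq -2$, so the condition is vacuous.
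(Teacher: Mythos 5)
Your proof matches the paper's essentially step for step: same ambient complex $Y = \SpheresNS(M_{n,b},H)$ and same appeal to Theorem~\ref{theorem:spheresnscm} and Lemma~\ref{lemma:makeinjectivecm} for its disc local injectivity, same set $\cB$ of bad simplices (loopless, separating into $\geq 3$ pieces), same identification $\hcL(\sigma,\cB)=\cL(\sigma,\cB)=\NSpheres(X_1,H)\ast\NSpheres(X_2)\ast\cdots\ast\NSpheres(X_r)$, and the same final reduction of (iii).c to the inequality $r\geq 3$.

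The one place where you deviate is in the dimension count for (iii).c. You assert that a join of weakly Cohen--Macaulay complexes is weakly Cohen--Macaulay of the expected dimension $\sum_i n_i - \rank(H) - 1$ and then apply Lemma~\ref{lemma:makeinjectivecm} once to the join, whereas the paper instead applies Lemma~\ref{lemma:makeinjectivecm} to each $\NSpheres$-factor individually and then combines the resulting disc-local-injectivity estimates via the already-proved Lemma~\ref{lemma:discjoin}. Both routes output disc local injectivity up to dimension $\sum_i n_i - \rank(H) - 2$, so the numerics agree; the paper's route just stays self-contained, avoiding the (true but unproved-in-paper) join-of-weakly-Cohen--Macaulay fact. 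Your bookkeeping in (i)/(ii), your description of $\cL(\sigma,\cB)$ in terms of nonseparating sub-sphere-systems in each $X_i$, your treatment of empty factors, and your closing remark explaining why the argument is run on $\ANSpheresEX$ (where $r\geq 3$) rather than directly on $\ANSpheres$ (where $r=2$ would lose a dimension and force a genuine enlargement of $\hcL$, exactly as happens in Lemma~\ref{lemma:augdisc}) are all correct.
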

\begin{proof}
Theorem \ref{theorem:spheresnscm} says that $\SpheresNS(M_{n,b},H)$ is weakly Cohen--Macaulay of dimension
$(n-\rank(H))$, so by Lemma \ref{lemma:makeinjectivecm} it has the disc local injectivity
property up to dimension $d=(n-1-\rank(H))$.  To prove the same for
$\ANSpheresEX(M_{n,b},H)$, it is enough to verify the conditions of
Proposition \ref{proposition:avoidbadhard} for the inclusion
\[\ANSpheresEX(M_{n,b},H) \hookrightarrow \SpheresNS(M_{n,b},H).\]
The input to Proposition \ref{proposition:avoidbadhard} is a set $\cB$ of ``bad simplices'', which
for us will be as follows:
\begin{itemize}
\item The set $\cB$ consist of all simplices $\sigma = \{S_0,\ldots,S_k\}$ of $\SpheresNS(M_{n,b},H)$
such that the union of the $S_i$ separates $M_{n,b}$ into $c \geq 3$ components, and all proper
subsets of $\sigma$ separate $M_{n,b}$ into fewer than $c$ components.  In particular, $\sigma$ is
its own separating core.
\end{itemize}
Another way of describing this condition is that the dual graph $\Gamma(\sigma)$ has at least $3$ vertices
and none of its edges are loops.  We now verify each hypothesis of Proposition \ref{proposition:avoidbadhard} in turn.

Condition (i) says that a simplex of $\SpheresNS(M_{n,b},H)$ lies in $\ANSpheresEX(M_{n,b},H)$ if
and only if none of its faces are in $\cB$, which is immediate from the definitions.

Condition (ii) says that if $\sigma,\sigma' \in \cB$ are such that $\sigma \cup \sigma'$ is a simplex
of $\SpheresNS(M_{n,b},H)$, then $\sigma \cup \sigma' \in \cB$.  This is also immediate from the
definitions.

Condition (iii) says that for all $\sigma \in \cB$, there exists a subcomplex $\hcL(\sigma,\cB)$ of
$\SpheresNS(M_{n,b},H)$ with $\cL(\sigma,\cB) \subset \hcL(\sigma,\cB)$ that satisfies conditions
(iii).a--(iii).c.  Here $\cL(\sigma,\cB)$ is the complex defined in Definition \ref{definition:badlink}.
Fixing some $\sigma \in \cB$, we will take $\hcL(\sigma,\cB) = \cL(\sigma,\cB)$.
The complex $\cL(\sigma,\cB)$ has the following concrete description.  Let $X_1,\ldots,X_r$
be the components of the complement of $\sigma$, with $X_1$ the basepoint-containing component.
Then
\[\hcL(\sigma,\cB) = \cL(\sigma,\cB) = \NSpheres(X_1,H) \ast \NSpheres(X_2) \ast \cdots \ast \NSpheres(X_r).\]
We verify (iii).a--(iii).c for this as follows.

Condition (iii).a says that 
$\partial \sigma \ast \hcL(\sigma,\cB) \subset \SpheresNS(M_{n,b},H)$.  In fact, $\hcL(\sigma,\cB)$
is contained in the link of $\sigma$, so $\sigma \ast \hcL(\sigma,\cB) \subset \SpheresNS(M_{n,b},H)$.

Condition (iii).b says that all simplices of $\partial \sigma \ast \hcL(\sigma,\cB)$ that are in $\cB$
lie in $\partial \sigma$.  Again, even more is true: all simplices of $\sigma \ast \hcL(\sigma,\cB)$ that
are in $\cB$ lie in $\sigma$.

Finally, Condition (iii).c says that $\hcL(\sigma,\cB)$ has the disc local injectivity
property up to dimension
\[d - \dim(\sigma) = (n-1-\rank(H)) - \dim(\sigma).\]
Let $X_i \cong M_{n_i,b_i}$.  Theorem \ref{theorem:nspherescm} says that $\NSpheres(X_1,H)$
is weakly Cohen--Macaulay of dimension $n_1-1-\rank(H)$, so by Lemma \ref{lemma:makeinjectivecm}
it has the disc local injectivity property up to dimension $n_1-2-\rank(H)$.  Similarly,
for $2 \leq i \leq r$ the complex $\NSpheres(X_i)$ has the disc local injectivity property
up to dimension $n_i-2$.  Applying Lemma \ref{lemma:discjoin}, we see that
\[\hcL(\sigma,\cB) = \cL(\sigma,\cB) = \NSpheres(X_1,H) \ast \NSpheres(X_2) \ast \cdots \ast \NSpheres(X_r)\]
has the disc local injectivity property up to dimension
\[(n_1-2-\rank(H)) + (n_2-2) + \cdots + (n_r-2) + 2(r-1) = (n_1+\cdots+n_r) - 2 - \rank(H).\]
We must prove that this is at least $n-1-\rank(H) - \dim(\sigma)$, i.e., that
\[n \leq n_1 + \cdots + n_r - 1 + \dim(\sigma).\]
Letting $\Gamma(\sigma)$ be the dual graph of $\sigma$, Lemma \ref{lemma:dualgraph} says that
\[n = \rank(\pi_1(\Gamma(\sigma))) + n_1 + \cdots + n_r.\]
Thus reduces us to proving that
\[\rank(\pi_1(\Gamma(\sigma))) \leq \dim(\sigma) - 1.\]
The graph $\Gamma(\sigma)$ has $r$ vertices and $\dim(\sigma)+1$ edges.  It follows that
\[1-\rank(\pi_1(\Gamma)) = r - \dim(\sigma) - 1 \quad \text{and hence} \quad \rank(\pi_1(\Gamma)) = 2+\dim(\sigma)-r.\]
This reduces our desired inequality to
\[2 + \dim(\sigma) - r \leq \dim(\sigma) - 1,\]
i.e., $r \geq 3$.  This is exactly the defining criterion for simplices in $\cB$: they must have at least $3$
components in their complement.  The lemma follows.
\end{proof}

\section{The augmented nonseparating complex of spheres \texorpdfstring{$\ANSpheres(M_{n,b},H)$}{AN(M,H)} III: disc local injectivity}
\label{section:augmenteddisc}

We now prove disc local injectivity for the augmented nonseparating complex of spheres $\ANSpheres(M_{n,b},H)$.

\subsection{Filtration}
Our goal is to deduce this from disc local injectivity of the expanded complex $\ANSpheresEX(M_{n,b},H)$.  
To do this, we consider a sequence of complexes that interpolate between $\ANSpheresEX(M_{n,b},H)$ and
$\ANSpheres(M_{n,b},H)$:

\begin{definition}
For some $n \geq 0$ and $b \geq 1$ and $m \geq 0$, fix a basepoint $\ast \in \partial M_{n,b}$
and let $H < \pi_1(M_{n,b},\ast) \cong F_n$ be a free factor.  The {\em rank-$m$ expanded augmented nonseparating complex of
$H$-compatible spheres} in $M_{n,b}$, denoted $\ANSpheresEX^m(M_{n,b},H)$, is
the subcomplex of $\Spheres(M_{n,b},H)$ whose $k$-simplices are $H$-compatible rank-$k$ sphere
systems $\sigma = \{S_0,\ldots,S_k\}$ in $M_{n,b}$ with the following properties:
\begin{itemize}
\item Each $S_i$ is a nonseparating sphere.
\item Let $\sigma'$ be the separating core of $\sigma$.  Then either $\sigma'$ is empty (so
$\sigma$ does not separate $M_{n,b}$), or $\sigma'$ has two components $X$ and $Y$ in
its complement.  Moreover, in the latter case if $X$ is the basepoint-containing component
then $\rank(\pi_1(Y)) \leq m$.\qedhere
\end{itemize}
\end{definition}

We have the following:

\begin{lemma}
\label{lemma:identifyan}
For some $n \geq 0$ and $b \geq 1$, fix a basepoint $\ast \in \partial M_{n,b}$
and let $H < \pi_1(M_{n,b},\ast) \cong F_n$ be a free factor.  Then for all
simplices $\sigma$ of $\ANSpheresEX(M_{n,b},H)$ and all components $Z$ of
the complement of $\sigma$, we have $\rank(\pi_1(Z)) \leq n-1$.  In particular,
$\ANSpheresEX^{n-1}(M_{n,b},H) = \ANSpheresEX(M_{n,b},H)$.
\end{lemma}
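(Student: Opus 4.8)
The plan is to reduce everything to the combinatorics of the dual graph $\Gamma(\sigma)$ together with Lemma \ref{lemma:dualgraph}. First I would prove the first assertion. Let $\sigma = \{S_0,\dots,S_k\}$ be a (nonempty) simplex of $\ANSpheresEX(M_{n,b},H)$, and let $X_1,\dots,X_r$ be the components of the complement of $\sigma$, with $X_i \cong M_{n_i,b_i}$, so that the vertices of $\Gamma(\sigma)$ are $X_1,\dots,X_r$ and its edges are the $k+1$ spheres $S_i$. The key translation is that $S_i$ is nonseparating in $M_{n,b}$ if and only if the edge of $\Gamma(\sigma)$ dual to $S_i$ is not a bridge, i.e.\ deleting that single edge leaves $\Gamma(\sigma)$ connected; this is because cutting $M_{n,b}$ along $S_i$ alone is the same as cutting along all of $\sigma$ and then regluing the remaining spheres, which on the level of dual graphs is exactly deletion of the edge dual to $S_i$. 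Since every $S_i$ is nonseparating and $\sigma$ has at least one sphere, $\Gamma(\sigma)$ is a connected graph with at least one edge, none of which is a bridge; hence $\Gamma(\sigma)$ contains a cycle and $\rank(\pi_1(\Gamma(\sigma))) \geq 1$. Lemma \ref{lemma:dualgraph} then gives $n = \rank(\pi_1(\Gamma(\sigma))) + \sum_{i=1}^r n_i$, so for each $j$ we get $\rank(\pi_1(X_j)) = n_j \leq \sum_i n_i = n - \rank(\pi_1(\Gamma(\sigma))) \leq n-1$, which is the first assertion.

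For the ``in particular'' I would argue both inclusions of $\ANSpheresEX^{n-1}(M_{n,b},H) = \ANSpheresEX(M_{n,b},H)$. The inclusion $\subseteq$ is essentially formal: in any simplex of $\ANSpheresEX^{m}(M_{n,b},H)$ all spheres are nonseparating and the separating core is empty or has two complementary components; since the nonseparating periphery contributes only loops to the dual graph, the complement of $\sigma$ itself has the same number of components as the complement of its separating core, namely at most two, so $\sigma$ lies in $\ANSpheresEX(M_{n,b},H)$. For the reverse inclusion, take $\sigma \in \ANSpheresEX(M_{n,b},H)$ with separating core $\sigma'$. If $\sigma' = \emptyset$ there is nothing to check. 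Otherwise $\sigma'$ is again a nonempty simplex of $\ANSpheresEX(M_{n,b},H)$ — its spheres are nonseparating in $M_{n,b}$ because they lie in $\sigma$, and its complement has at most two components by the remark just used — and, being its own separating core, its complement has exactly two components $X$ (basepoint-containing) and $Y$. Applying the already-proved first assertion to $\sigma'$ and the component $Y$ gives $\rank(\pi_1(Y)) \leq n-1$, which is precisely the defining condition for $\sigma$ to lie in $\ANSpheresEX^{n-1}(M_{n,b},H)$.

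The only genuinely non-routine step is the bridge translation ``$S_i$ nonseparating $\Longleftrightarrow$ the dual edge is not a bridge of $\Gamma(\sigma)$''; I expect this to be the (small) main obstacle, and the rest is bookkeeping with Lemma \ref{lemma:dualgraph}, the arithmetic $\rank(\pi_1) = (\#\text{edges}) - (\#\text{vertices}) + 1$ for connected graphs, and the structural remarks about the separating core recorded just before this lemma. One should also be slightly careful to read ``simplex'' as ``nonempty simplex'' in the first assertion (the empty simplex has connected complement $M_{n,b}$, whose fundamental group has rank $n$), but this caveat does not affect the asserted equality of complexes.
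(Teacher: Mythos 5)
Your proof is correct and follows essentially the same route as the paper's: observe that every $S_i$ being nonseparating makes every dual edge of $\Gamma(\sigma)$ a non-bridge, so $\rank(\pi_1(\Gamma(\sigma))) \geq 1$, and then apply Lemma \ref{lemma:dualgraph}. You spell out the ``In particular'' equality and the nonempty-simplex caveat more explicitly than the paper, which leaves both as immediate, but the substance is identical.
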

\begin{proof}
Since all vertices of $\sigma$ are nonseparating spheres, removing the edge of the dual
graph $\Gamma(\sigma)$ corresponding to any one of them does not separate $\Gamma(\sigma)$.
This implies that $\Gamma(\sigma)$ has a nontrivial fundamental group.  The
lemma now follows from Lemma \ref{lemma:dualgraph}.
\end{proof}

It follows from this that we have
\begin{align*}
\ANSpheres(M_{n,b},H) &= \ANSpheresEX^0(M_{n,b},H) \subset \ANSpheresEX^1(M_{n,b},H) \\
                      &\quad \quad \quad \subset \cdots \subset \ANSpheresEX^{n-1}(M_{n,b},H) = \ANSpheresEX(M_{n,b},H).
\end{align*}

\subsection{Disc local injectivity}
Our result is as follows.  The case $m=0$ of it completes the proof of Theorem \ref{theorem:anspheresbetter}.

\begin{lemma}
\label{lemma:augdisc}
For some $n,b \geq 1$ and $0 \leq m \leq n-1$, fix a basepoint $\ast \in \partial M_{n,b}$
and let $H < \pi_1(M_{n,b},\ast) \cong F_n$ be a free factor.  Assume
that $\rank(H) \leq n-1$ and that if $\rank(H) = n-1$, then $b \geq 2$.
Then $\ANSpheresEX^m(M_{n,b},H)$ has
the disc local injectivity property up to dimension $(n-1-\rank(H))$.
\end{lemma}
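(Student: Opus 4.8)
The plan is to argue by downward induction on $m$. The base case $m=n-1$ holds because $\ANSpheresEX^{n-1}(M_{n,b},H) = \ANSpheresEX(M_{n,b},H)$ by Lemma \ref{lemma:identifyan}, and Lemma \ref{lemma:anspheresurdisc} already gives that this complex has the disc local injectivity property up to dimension $d := n-1-\rank(H)$. For the inductive step, assuming the statement for $m+1$, I would apply Proposition \ref{proposition:avoidbadhard} to the inclusion $\ANSpheresEX^m(M_{n,b},H) \hookrightarrow \ANSpheresEX^{m+1}(M_{n,b},H)$. The set $\cB$ of bad simplices will consist of the simplices $\sigma$ of $\ANSpheresEX^{m+1}(M_{n,b},H)$ that equal their own separating core and separate $M_{n,b}$ into exactly two components $X$ (the basepoint-containing one) and $Y$ with $\rank(\pi_1(Y)) = m+1$; these are exactly the minimal simplices that lie in $\ANSpheresEX^{m+1}$ but not in $\ANSpheresEX^m$. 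One elementary point that will be used repeatedly is that every $\sigma \in \cB$ satisfies $\dim(\sigma) \geq 1$, since a lone separating sphere is not a vertex of $\ANSpheresEX^{m+1}(M_{n,b},H)$.

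Hypotheses (i) and (ii) of Proposition \ref{proposition:avoidbadhard} are routine: (i) follows from the facts that a proper face of the separating core has strictly fewer complement components and that a separating sphere stays separating under further cutting, while (ii) holds because simplices of $\ANSpheresEX^{m+1}(M_{n,b},H)$ separate into at most two components. For (iii), given $\sigma \in \cB$ with components $X$ and $Y$ as above, the honest bad link $\cL(\sigma,\cB)$ is the whole link of $\sigma$, which one computes to be $\NSpheres(X,H) \ast \NSpheres(Y)$; I would take the enlargement
\[\hcL(\sigma,\cB) = \NSpheres(X,H) \ast \ANSpheresEX(Y),\]
which contains $\cL(\sigma,\cB)$ because $\NSpheres(Y) \subset \ANSpheresEX(Y)$.

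To verify (iii).a, one considers a simplex $\rho \cup \alpha \cup \beta$ with $\rho$ a proper face of $\sigma$, $\alpha$ a nonseparating system in $X$ and $\beta \in \ANSpheresEX(Y)$, and analyzes its dual graph: $\alpha$ produces no separation, $\beta$ separates $Y$ into at most two pieces, and discarding spheres of $\sigma$ can only yield at most two complement components of $M_{n,b}$, the non-basepoint one sitting inside $Y$ and hence having $\pi_1$-rank at most $m+1$; thus $\rho\cup\alpha\cup\beta \in \ANSpheresEX^{m+1}(M_{n,b},H)$. For (iii).b one notes that if such a $\beta$ genuinely separates $Y$ then the first Betti number of its dual graph is positive (a single $Y$-separating sphere is not a vertex of $\ANSpheresEX(Y)$), so by Lemma \ref{lemma:dualgraph} every piece of $Y\setminus\beta$ has $\pi_1$-rank at most $m$; hence no simplex of $\partial\sigma\ast\hcL(\sigma,\cB)$ outside $\partial\sigma$ can lie in $\cB$. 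For (iii).c, Theorem \ref{theorem:nspherescm} and Lemma \ref{lemma:makeinjectivecm} give that $\NSpheres(X,H)$ has the disc local injectivity property up to dimension $\rank(\pi_1(X))-\rank(H)-2$, and Lemma \ref{lemma:anspheresurdisc} — applicable to $Y$ because $\dim(\sigma)\geq 1$ forces $Y$ to have at least two boundary spheres, so the exceptional hypothesis is met — gives the same for $\ANSpheresEX(Y)$ up to dimension $\rank(\pi_1(Y))-1 = m$. Lemma \ref{lemma:discjoin}, together with Lemma \ref{lemma:dualgraph} applied to the dual graph of $\sigma$ (two vertices, $\dim(\sigma)+1$ edges, so $n = \dim(\sigma)+\rank(\pi_1(X))+\rank(\pi_1(Y))$), then yields disc local injectivity for $\hcL(\sigma,\cB)$ up to exactly $d-\dim(\sigma)$. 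The degenerate case $\rank(\pi_1(X)) = \rank(H)$, in which $\NSpheres(X,H)$ is empty and the join reduces to $\ANSpheresEX(Y)$, is checked directly.

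The hard part will be condition (iii), and specifically the choice of $\hcL(\sigma,\cB)$. The honest link $\cL(\sigma,\cB)$ falls one dimension short of what (iii).c demands, which forces the enlargement; the delicate balance is that $\ANSpheresEX(Y)$ is just large enough to recover the missing dimension while still keeping $\partial\sigma\ast\hcL(\sigma,\cB)$ inside $\ANSpheresEX^{m+1}(M_{n,b},H)$ and introducing no new bad simplices. Both of these constraints hold precisely because bad simplices have dimension at least one and because the dual graph of any genuinely $Y$-separating system in $\ANSpheresEX(Y)$ has positive first Betti number; the dimension bookkeeping in (iii).c is tight, with no room to spare.
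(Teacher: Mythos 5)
Your proposal is correct and follows essentially the same route as the paper: the same reverse induction on $m$, the same choice of bad simplices $\cB$, the same enlargement $\hcL(\sigma,\cB) = \NSpheres(X,H)\ast\ANSpheresEX(Y)$, and the same combination of Theorem \ref{theorem:nspherescm}, Lemma \ref{lemma:makeinjectivecm}, Lemma \ref{lemma:anspheresurdisc}, Lemma \ref{lemma:discjoin}, and Lemma \ref{lemma:dualgraph} for the dimension count in (iii).c. Where you go beyond the paper is in spelling out the dual-graph arguments for (iii).a and (iii).b (the paper dispatches both with a one-sentence appeal to proper faces of $\sigma$ being nonseparating) and in explicitly noting that $\dim\sigma\geq 1$, which is what guarantees that the non-basepoint side $Y$ has at least two boundary spheres and hence that the exceptional ``$b\geq 2$ when $\rank(H)=n-1$'' hypothesis of Lemma \ref{lemma:anspheresurdisc} is met for $\ANSpheresEX(Y)$ — a point the paper uses tacitly.
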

\begin{proof}
The proof is by reverse induction on $m$.  Lemma \ref{lemma:identifyan} says that
$\ANSpheresEX^{n-1}(M_{n,b},H) = \ANSpheresEX(M_{n,b},H)$, so
the base case $m=n-1$ is provided by Lemma \ref{lemma:anspheresurdisc}.  Assume, therefore, that
$0 \leq m < n-1$ and that $\ANSpheresEX^{m+1}(M_{n,b},H)$ has
the disc local injectivity property up to dimension $d=n-1-\rank(H)$.
To prove the same for
$\ANSpheresEX^m(M_{n,b},H)$, it is enough to verify the conditions of
Proposition \ref{proposition:avoidbadhard} for the inclusion
\[\ANSpheresEX^m(M_{n,b},H) \hookrightarrow \ANSpheresEX^{m+1}(M_{n,b},H).\]
The input to Proposition \ref{proposition:avoidbadhard} is a set $\cB$ of ``bad simplices''
of $\ANSpheresEX^{m+1}(M_{n,b},H)$, which
for us will be the set of simplices $\sigma$ satisfying the following:
\begin{itemize}
\item $\sigma$ separates $M_{n,b}$ into two components $X$ and $Y$, and
\item $\sigma$ equals its own separating core, and
\item if $X$ is the basepoint-containing component of the complement, then $\rank(\pi_1(Y)) = m+1$.
\end{itemize}
We now verify each hypothesis of Proposition \ref{proposition:avoidbadhard} in turn.

Condition (i) says that a simplex $\sigma$ of $\ANSpheresEX^{m+1}(M_{n,b},H)$ lies in $\ANSpheresEX^{m}(M_{n,b},H)$ if
and only if none of its faces are in $\cB$.  Since no simplices of $\cB$ lie in $\ANSpheresEX^{m}(M_{n,b},H)$, if
a face of $\sigma$ lies in $\cB$ then 
$\sigma$ does not lie in $\ANSpheresEX^{m}(M_{n,b},H)$.  Conversely, assume that no face of $\sigma$ lies in $\cB$.
We will prove that $\sigma$ lies in $\ANSpheresEX^{m}(M_{n,b},H)$.
If $\sigma$ does not separate $M_{n,b}$, then it trivially
lies in $\ANSpheresEX^{m}(M_{n,b},H)$.  If $\sigma$ does separate, then let $\sigma'$
be its separating core.  Let $X$ and $Y$ be the components
of the complement of $\sigma'$, with $X$ the basepoint-containing component.  Since $\sigma$ is a
simplex of $\ANSpheresEX^{m+1}(M_{n,b},H)$, we have $\rank(\pi_1(Y)) \leq m+1$.  Moreover,
since $\sigma'$ does not lie in $\cB$ this must be a strict inequality, i.e., $\rank(\pi_1(Y)) \leq m$.
It follows that $\sigma$ lies in $\ANSpheresEX^{m}(M_{n,b},H)$, as desired.

Condition (ii) says that if $\sigma,\sigma' \in \cB$ are such that $\sigma \cup \sigma'$ is a simplex
of $\ANSpheresEX^{m+1}(M_{n,b},H)$, then $\sigma \cup \sigma' \in \cB$.  In fact, since
simplices of $\ANSpheresEX^{m+1}(M_{n,b},H)$ can separate $M_{n,b}$ into at most $2$ components, this
can only happen if $\sigma = \sigma'$, so there is nothing to prove.

Condition (iii) says that for all $\sigma \in \cB$, there is a subcomplex $\hcL(\sigma,\cB)$ of
$\ANSpheresEX^{m+1}(M_{n,b},H)$ with $\cL(\sigma,\cB) \subset \hcL(\sigma,\cB)$ that satisfies conditions
(iii).a--(iii).c.  Here $\cL(\sigma,\cB)$ is the complex defined in Definition \ref{definition:badlink}.
Fixing some $\sigma \in \cB$, before we can define $\hcL(\sigma,\cB)$ we must give a concrete
description of $\cL(\sigma,\cB)$.  Let $X$ and $Y$ be the components of the complement of
$\sigma$, with $X$ the basepoint-containing component.  The group $\pi_1(Y)$ thus has rank $m+1$.
It follows from the definitions that
\[\cL(\sigma,\cB) = \Link_{\ANSpheresEX^{m+1}(M_{n,b},H)}(\sigma) = \NSpheres(X,H) \ast \NSpheres(Y).\]
We then define
\[\hcL(\sigma,\cB) = \NSpheres(X,H) \ast \ANSpheresEX(Y) = \NSpheres(X,H) \ast \ANSpheresEX^m(Y),\]
where the final equality follows from Lemma \ref{lemma:identifyan}.
We verify (iii).a--(iii).c for this as follows.

Condition (iii).a says that
$\partial \sigma \ast \hcL(\sigma,\cB) \subset \ANSpheresEX^{m+1}(M_{n,b},H)$.  This follows
immediately from the fact that proper faces of $\sigma$ do not separate $M_{n,b}$.

Condition (iii).b says that all simplices of $\partial \sigma \ast \hcL(\sigma,\cB)$ that are in $\cB$
lie in $\partial \sigma$.  Even more is true: since proper faces of $\sigma$ do not separate
$M_{n,b}$, no simplices of $\partial \sigma \ast \hcL(\sigma,\cB)$ lie in $\cB$.

Finally, Condition (iii).c says that $\hcL(\sigma,\cB)$ has the disc local injectivity
property up to dimension
\[d - \dim(\sigma) = (n-1-\rank(H)) - \dim(\sigma).\]
Let $X \cong M_{n',b'}$ and $Y \cong M_{n'',b''}$, so by definition $n'' = m+1$.  It follows from Lemma \ref{lemma:anspheresurdisc}
that
\[\ANSpheresEX^m(Y) = \ANSpheresEX(Y)\]
has the disc local injectivity property up to
dimension $n''-1$.  Also, Theorem \ref{theorem:nspherescm} says that $\NSpheres(X,H)$
is weakly Cohen--Macaulay of dimension $n'-1-\rank(H)$, so by Lemma \ref{lemma:makeinjectivecm}
it has the disc local injectivity property up to dimension $n'-2-\rank(H)$.  
Applying Lemma \ref{lemma:discjoin}, we see that
\[\hcL(\sigma,\cB) = \NSpheres(X,H) \ast \ANSpheresEX^m(Y)\]
has the disc local injectivity property up to dimension
\[(n'-2-\rank(H)) + (n''-1)+2 = (n'+n'') - 1 -\rank(H).\]
We must prove that this is at least $n-1-\rank(H)-\dim(\sigma)$, i.e., that
\[n'+n'' \geq n - \dim(\sigma).\]
In fact this is an equality.  To see this note that 
the dual graph $\Gamma(\sigma)$ has two vertices corresponding to $X$ and $Y$ and
$\dim(\sigma)+1$ edges.  It follows that $\rank(\pi_1(\Gamma(\sigma))) = \dim(\sigma)$.  Applying
Lemma \ref{lemma:dualgraph}, we see that
\[n = \rank(\pi_1(\Gamma(\sigma))) + n' + n'' = \dim(\sigma) + n' + n''.\]
The lemma follows.
\end{proof}

\section{A presentation of the Steinberg module for \texorpdfstring{$\Aut(F_n)$}{Aut(Fn)}}
\label{section:presentation}

In this section, we use our high connectivity results for the augmented nonseparating
sphere complex to construct a presentation for the Steinberg module $\St(F_n)$, whose
definition we will recall below.

\subsection{Generalities about posets}

Let $\bA$ be a poset.  Let $\bA^{\opp}$ be the opposite poset of $\bA$.  Recall that the
geometric realization $|\bA|$ is the simplicial complex whose
$k$-simplices are chains $a_0 < \cdots < a_k$ in $\bA$.  The
simplicial complexes $|\bA|$ and $|\bA^{\opp}|$ are isomorphic.  A key
example is as follows:

\begin{definition}
Let $X$ be a simplicial complex.  The {\em poset of simplices} of $X$, denoted $\Poset(X)$, is the
poset whose elements are simplices of $X$ ordered by inclusion.
\end{definition}

For a simplicial complex $X$, the geometric realization $|\Poset(X)|$ is the first barycentric
subdivision of $X$.  We will also need the following two notions:

\begin{definition}
Let $\bA$ be a poset and $a \in \bA$. The {\em height} of $a$, denoted $\height(a)$, 
is the maximal $k \geq 0$ such that there exists a chain $a_0 < a_1 < \cdots < a_k = a$.
\end{definition}

\begin{example}
If $X$ is a simplicial complex and $\sigma \in \Poset(X)$, then $\height(\sigma) = \dim(\sigma)$.
\end{example}

\begin{definition}
Let $\phi \colon \bA \rightarrow \bB$ be a map of posets and $b \in \bB$.  The {\em poset fiber} of $\phi$ over $b$, denoted 
$\phi^{\leq b}$, is the subposet of $\bA$ consisting of all $a \in \bA$ such that $\phi(a) \leq b$.
\end{definition}

The following is the case $m=1$ of Church--Putman \cite[Proposition 2.3]{ChurchPutmanCodim1}, which
builds on work of Quillen \cite{QuillenPoset}.  Recall from Definition \ref{definition:cm} that
a simplicial complex is Cohen--Macaulay of dimension $d$ if it is $d$-dimensional and weakly Cohen--Macaulay of
dimension $d$.

\begin{proposition}[{Church--Putman \cite[Proposition 2.3]{ChurchPutmanCodim1}}]
\label{proposition:posetfiber}
Fix an abelian group $R$.  Let $\phi\colon \bA \rightarrow \bB$ be a map of posets.  Assume that $|\bB|$ is 
Cohen--Macaulay of dimension $d \geq 0$ and that for all $b \in \bB$, the geometric realization of the poset fiber
$|\phi^{\leq b}|$ is $\height(b)$-connected.  Then
$\phi_{\ast}\colon \RH_i(|\bA|;R) \rightarrow \RH_i(|\bB|;R)$ is an isomorphism for $i \leq d$.
\end{proposition}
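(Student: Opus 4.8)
\emph{Remark on provenance.} The statement is exactly the case $m=1$ of \cite[Proposition 2.3]{ChurchPutmanCodim1}, which itself builds on Quillen \cite{QuillenPoset}; so rather than a full proof I will sketch the argument I would give. The plan is to treat this as a homology‑truncated version of Quillen's Theorem~A with connectivity estimates.

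\emph{Step 1: replace $\phi$ by a map with controlled fibers.} Form the poset $\bA' = \{(a,b)\in\bA\times\bB \mid \phi(a)\le b\}$ with the product order, with projections $p\colon\bA'\to\bA$ and $q\colon\bA'\to\bB$. For each $a\in\bA$ the poset fiber $p^{\le a}$ carries the self‑map $(a',b')\mapsto(a',\phi(a'))$, which is $\le\mathrm{id}$ and is the identity on its image $\{(a',\phi(a'))\mid a'\le a\}\cong\bA^{\le a}$; since $\bA^{\le a}$ has a maximum, $|p^{\le a}|$ is contractible, so $p$ is a homotopy equivalence by Quillen's Theorem~A. Because $\phi(a)\le b=q(a,b)$, there is a natural transformation $\phi\circ p\Rightarrow q$, so $|q|$ and $|\phi|\circ|p|$ are homotopic; hence $\phi_\ast$ is an isomorphism in a range if and only if $q_\ast$ is, and it suffices to prove the statement for $q$.

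\emph{Step 2: a spectral sequence for $q$.} I would recognize $\bA'$ as the Grothendieck construction of the functor $\bB\to(\text{posets})$, $b\mapsto\phi^{\le b}$ (with the inclusions $\phi^{\le b}\hookrightarrow\phi^{\le b'}$ for $b\le b'$), so by Thomason's theorem $|\bA'|\simeq\mathrm{hocolim}_{b\in\bB}|\phi^{\le b}|$, compatibly with $q$ and the canonical map $\mathrm{hocolim}_{b\in\bB}(\ast)=|\bB|$. The Bousfield--Kan spectral sequence gives
\[E^2_{p,q} = H_p\bigl(\bB;\ b\mapsto H_q(|\phi^{\le b}|;R)\bigr)\ \Longrightarrow\ \RH_{p+q}(|\bA'|;R),\]
mapping to the corresponding spectral sequence for $|\bB|$, which (coefficients $\mathrm{const}(R)$) is concentrated in the row $q=0$ with $E^2_{p,0}=H_p(|\bB|;R)$. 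Since $\height(b)\ge0$, each $|\phi^{\le b}|$ is nonempty and connected, so the comparison map is an isomorphism on the row $q=0$; thus $q_\ast$ is an isomorphism in degrees $\le d$ provided
\[E^2_{p,q}=H_p(\bB;\mathcal{H}_q)=0\qquad\text{for }q\ge1,\ p+q\le d,\]
where $\mathcal{H}_q(b)=\RH_q(|\phi^{\le b}|;R)$, which by the connectivity hypothesis vanishes whenever $\height(b)\ge q$.

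\emph{Step 3 (the main obstacle): the vanishing, via Cohen--Macaulayness.} Since $|\bB|$ is Cohen--Macaulay of dimension $d$, the poset $\bB$ is graded of rank $d$ with $\mathrm{rank}=\height$, every element lies strictly below a rank‑$d$ element, and links of simplices of $|\bB|$ are again Cohen--Macaulay with the expected drop in dimension. I would prove by induction on $d$: if $\mathcal{M}\colon\bB\to\mathrm{Ab}$ is supported on elements of rank $<q$, then $H_p(\bB;\mathcal{M})=0$ for $p\le d-q$. The case $p=0$ is immediate, since $H_0(\bB;\mathcal{M})=\mathrm{colim}_{\bB}\mathcal{M}$ and every element where $\mathcal{M}\ne0$ lies below a rank‑$d$ element, where $\mathcal{M}$ vanishes. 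For $p\ge1$, filter $C_\ast(\bB;\mathcal{M})$ by the rank of the minimal element of a chain; the associated graded in level $s$ splits as a sum over rank‑$s$ elements $b_0$ of $\mathcal{M}(b_0)$ tensored with the reduced chains of $|\bB_{>b_0}|$, and since $|\bB_{>b_0}|$ is a join factor of $\Link_{|\bB|}(b_0)$ one feeds in the inductive hypothesis for these lower‑dimensional Cohen--Macaulay complexes. The delicate point is the rank/connectivity bookkeeping through these link and join decompositions, so that the numerology $p+q\le d$ comes out exactly; a secondary point is that the posets in our applications are infinite, so one should note they are ``locally finite enough'' (each element a finite simplex in a finite‑dimensional complex) for the graded and colimit arguments to apply.
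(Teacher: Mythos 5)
The paper does not give its own proof of this proposition: it cites it directly from Church--Putman \cite[Proposition 2.3]{ChurchPutmanCodim1}, so there is no internal argument to compare against. Your sketch nonetheless follows what I believe is the standard route (reduce via Quillen's Theorem A to the comma-poset projection, then compare the two nerve spectral sequences), so it is close in spirit to the cited source and to Quillen \cite{QuillenPoset}. Steps~1 and~2 are essentially right (the small imprecision about ``$\RH_{p+q}$'' versus unreduced homology is harmless given that you argue by comparing spectral sequences rather than analyzing one in isolation).

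The genuine gap is in Step~3, in the sentence asserting that one can control $|\bB_{>b_0}|$ because it is a join factor of $\Link_{|\bB|}(b_0)$. Connectivity of a join factor cannot be read off from connectivity of the join: $X \ast Y$ is contractible whenever $X$ is, independently of $Y$, so knowing $\Link_{|\bB|}(b_0) = |\bB_{<b_0}| \ast |\bB_{>b_0}|$ is $(d-2)$-connected tells you nothing about $|\bB_{>b_0}|$ by itself. The fix is to take the link of a \emph{maximal} simplex of $|\bB|$ lying in $\bB_{\leq b_0}$ rather than the link of the vertex $b_0$. First observe that Cohen--Macaulayness of dimension $d$ forces $|\bB|$ to be pure $d$-dimensional (a maximal simplex $\tau$ of dimension $<d$ would have $\Link_{|\bB|}(\tau)=\emptyset$, contradicting the required $(\geq -1)$-connectivity), hence $\bB$ is graded with $\height$ equal to rank. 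If $\height(b_0)=s$, choose a maximal chain $a_0 < \cdots < a_s = b_0$; by gradedness all intermediate open intervals $(a_i,a_{i+1})$ are empty, so the link of the $s$-simplex $\sigma = \{a_0,\ldots,a_s\}$ is exactly $|\bB_{>b_0}|$, and the Cohen--Macaulay hypothesis then gives that $|\bB_{>b_0}|$ is $(d-s-2)$-connected. Plugging this into your filtration-by-$\height(b_0)$ argument, the associated graded in level $s$ has homology $\bigoplus_{\height(b_0)=s}\mathcal{M}(b_0)\otimes\RH_{p-1}(|\bB_{>b_0}|;R)$, which vanishes for $p\le d-s-1$; combined with $\mathcal{M}(b_0)=0$ for $s\geq q$, this gives $H_p(\bB;\mathcal{M})=0$ for $p\le d-q$ directly, with no induction on $d$ needed. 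With that repair, your outline gives a complete proof.
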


\subsection{Relating the free factor complex to the sphere complex}
Recall from the introduction that the free factor complex $\Tits(F_n)$ is the geometric
realization of the following poset:

\begin{definition}
Let $\BTits(F_n)$ be the poset of proper nontrivial free factors of $F_n$, ordered
by inclusion.
\end{definition}

Hatcher--Vogtmann \cite{HatcherVogtmann, HatcherVogtmannRevised} proved that $\Tits(F_n) = |\BTits(F_n)|$ is homotopy
equivalent to a wedge of $(n-2)$-dimensional spheres, and by definition
\[\St(F_n) = \RH_{n-2}(\Tits(F_n);\Q) = \RH_{n-2}(|\BTits(F_n)|;\Q).\]
We will relate this to the sphere complex $\Spheres(M_{n,1})$ and its subcomplexes.  Fix $\ast \in \partial M_{n,1}$.  
Recall from the introduction that the action of
$\Diff^{+}(M_{n,1},\partial M_{n,1})$ on $\Spheres(M_{n,1})$ factors through
\[\Aut(\pi_1(M_{n,1},\ast)) \cong \Aut(F_n).\]
We thus get an action of $\Aut(F_n)$ on $\Spheres(M_{n,1})$.  The group $\Aut(F_n)$
also acts on $\St(F_n)$, and while relating $\St(F_n)$ to $\Spheres(M_{n,1})$ and
its subcomplexes we will keep track of these group actions.

We introduce the following subcomplex of the augmented nonseparating sphere complex $\ANSpheres(M_{n,1})$. 

\begin{definition}
For $n \geq 0$, let $\ANSpheres'(M_{n,1})$ be the subcomplex of $\ANSpheres(M_{n,1})$ consisting of
simplices $\sigma$ such that the basepoint-containing component of the
complement of $\sigma$ has a nontrivial fundamental group. 
\end{definition}

Having done this, we can now make the following key definition:

\begin{definition}
For some $n \geq 0$, fix a basepoint $\ast \in \partial M_{n,1}$.  Identify $\pi_1(M_{n,1},\ast)$ with $F_n$.
The {\em complement map} is the map of posets $\Upsilon\colon \Poset(\ANSpheres'(M_{n,1})) \rightarrow \BTits(F_n)^{\opp}$
defined as follows.  Consider $\sigma \in \Poset(\ANSpheres'(M_{n,1}))$, and let $X$ be the basepoint-containing
component of the complement of $\sigma$.  Then
\[\Upsilon(\sigma) = \pi_1(X,\ast) \subset \pi_1(M_{n,1},\ast) = F_n.\qedhere\]
\end{definition}

\begin{remark}
This definition makes sense since $\sigma$ is a simplex of $\ANSpheres'(M_{n,1})$, so the basepoint-containing
component $X$ of the complement of $\sigma$ is not simply connected and $\Upsilon(\sigma) = \pi_1(X,\ast)$ is
not trivial.  The map $\Upsilon$ cannot be defined on the whole complex $\ANSpheres(M_{n,1})$.
\end{remark}

\begin{remark}
The target of the complement map is the opposite poset $\BTits(F_n)^{\opp}$ because if $\sigma$ and $\sigma'$
are simplices of $\ANSpheres'(M_{n,1})$ with $\sigma \subset \sigma'$, then the basepoint-containing 
component $X$ of the complement of $\sigma$ contains the basepoint-containing component $X'$ of the complement
of $\sigma'$, so
\[\Upsilon(\sigma) = \pi_1(X,\ast) \supset \pi_1(X',\ast) = \Upsilon(\sigma').\qedhere.\]
\end{remark}

\begin{remark}
\label{remark:equivariance1}
The action of $\Aut(F_n)$ on $\Spheres(M_{n,1})$ preserves the subcomplexes $\ANSpheres(M_{n,1})$ and
$\ANSpheres'(M_{n,1})$, and the map $\Upsilon\colon \Poset(\ANSpheres'(M_{n,1})) \rightarrow \BTits(F_n)^{\opp}$
is $\Aut(F_n)$-equivariant.
\end{remark}

Identify 
\[\RH_{n-2}(\ANSpheres'(M_{n,1});\Q) \quad \text{with} \quad \RH_{n-2}(|\Poset(\ANSpheres'(M_{n,1}))|;\Q)\]
and
\[\St(F_n) = \RH_{n-2}(\Tits(F_n);\Q) \quad \text{with} \quad \RH_{n-2}(|\BTits(F_n)|;\Q) = \RH_{n-2}(|\BTits(F_n)^{\opp}|;\Q).\]  
We then have the following:

\begin{lemma}
\label{lemma:spaniso}
For some $n \geq 2$, fix a basepoint $\ast \in \partial M_{n,1}$.  Identify $\pi_1(M_{n,1},\ast)$ with $F_n$. 
Then the complement map 
$\Upsilon\colon \Poset(\ANSpheres'(M_{n,1})) \rightarrow \BTits(F_n)^{\opp}$ induces an $\Aut(F_n)$-equivariant isomorphism
\[\Upsilon_{\ast}\colon \RH_{n-2}(\ANSpheres'(M_{n,1});\Q) \stackrel{\cong}{\longrightarrow} \St(F_n).\]
\end{lemma}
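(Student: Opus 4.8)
The plan is to apply the Church--Putman poset-fiber criterion, Proposition~\ref{proposition:posetfiber}, to the complement map $\Upsilon\colon \Poset(\ANSpheres'(M_{n,1})) \rightarrow \BTits(F_n)^{\opp}$ with coefficient ring $R = \Q$ and $d = n-2 \geq 0$. This requires verifying two hypotheses: that $|\BTits(F_n)^{\opp}| \cong \Tits(F_n)$ is Cohen--Macaulay of dimension $n-2$, and that for every proper nontrivial free factor $A < F_n$ the poset fiber $\Upsilon^{\leq A}$ has $\height(A)$-connected geometric realization, the height being computed in $\BTits(F_n)^{\opp}$.

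The heart of the matter is the poset-fiber condition, and here I would simply unwind the definitions. By definition $\Upsilon^{\leq A}$ is the subposet of $\Poset(\ANSpheres'(M_{n,1}))$ consisting of simplices $\sigma$ whose basepoint-containing complementary component $X$ satisfies $\pi_1(X,\ast) \supseteq A$; since $A \neq 1$ this forces $\pi_1(X,\ast) \neq 1$, so the defining condition of $\ANSpheres'$ is automatic and $\Upsilon^{\leq A} = \Poset(\ANSpheres(M_{n,1},A))$. Hence $|\Upsilon^{\leq A}|$ is the barycentric subdivision of $\ANSpheres(M_{n,1},A)$, and in particular is homeomorphic to it. Next I would compute the height: a chain $a_0 <^{\opp} \cdots <^{\opp} a_k = A$ in $\BTits(F_n)^{\opp}$ is a strictly decreasing chain $a_0 \supsetneq \cdots \supsetneq a_k = A$ of proper nontrivial free factors, along which the ranks strictly decrease from at most $n-1$ down to $\rank(A)$; thus $k \leq n-1-\rank(A)$, with equality realized by a chain dropping rank by one at each step, so $\height(A) = n-1-\rank(A)$. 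Since $\rank(A) \leq n-1$, Theorem~\ref{theorem:nspheresredcon} says exactly that $\ANSpheres(M_{n,1},A)$ is $(n-1-\rank(A))$-connected, which is the required $\height(A)$-connectivity. This is precisely the payoff of the connectivity theorem for the augmented nonseparating sphere complex.

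For the Cohen--Macaulay hypothesis I would invoke Hatcher--Vogtmann \cite{HatcherVogtmann}: the poset $\BTits(F_n)$ has dimension $n-2$ (a longest chain of proper nontrivial free factors has ranks $1 < 2 < \cdots < n-1$), and the link of a $k$-simplex of $\Tits(F_n)$ decomposes as a join of free factor complexes of strictly smaller rank together with the ``relative'' intervals between consecutive factors, which are again free factor complexes of the appropriate quotients; the high connectivity of each join factor is Hatcher--Vogtmann's theorem that $\Tits(F_m)$ is homotopy equivalent to a wedge of $(m-2)$-spheres, and an induction on $n$ then yields that $\Tits(F_n)$ is Cohen--Macaulay of dimension $n-2$ with links of the expected dimension. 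With both hypotheses established, Proposition~\ref{proposition:posetfiber} gives that $\Upsilon_\ast\colon \RH_i(|\Poset(\ANSpheres'(M_{n,1}))|;\Q) \rightarrow \RH_i(|\BTits(F_n)^{\opp}|;\Q)$ is an isomorphism for all $i \leq n-2$; taking $i = n-2$ and applying the identifications recorded just before the statement (barycentric subdivision is a homeomorphism, and $|\bB^{\opp}| \cong |\bB|$) yields $\Upsilon_\ast\colon \RH_{n-2}(\ANSpheres'(M_{n,1});\Q) \xrightarrow{\cong} \St(F_n)$. The $\Aut(F_n)$-equivariance is then immediate: $\Upsilon$ is $\Aut(F_n)$-equivariant by Remark~\ref{remark:equivariance1}, and all the identifications above are natural.

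The step I expect to be the main obstacle is the Cohen--Macaulay input: it is the one ingredient not already packaged in the excerpt, and one must be careful that the join decomposition of links in $\BTits(F_n)$ is set up correctly --- in particular that the relative intervals between two nested free factors are genuinely free factor complexes of the correct rank --- since Proposition~\ref{proposition:posetfiber} needs both the connectivity and the dimension bounds on links. Everything else (identifying the poset fibers with $\ANSpheres(M_{n,1},A)$, computing heights, and matching these against Theorem~\ref{theorem:nspheresredcon}) is routine bookkeeping.
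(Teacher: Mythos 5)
Your proof is correct and follows essentially the same route as the paper: apply the Church--Putman poset-fiber criterion, identify the fiber $\Upsilon^{\leq H}$ with $\Poset(\ANSpheres(M_{n,1},H))$ (noting that $H \neq 1$ makes the prime condition automatic), compute $\height(H) = n-1-\rank(H)$ in the opposite poset, and invoke Theorem~\ref{theorem:nspheresredcon}. The one place you depart from the paper is the Cohen--Macaulay input: the paper simply cites Hatcher--Vogtmann \cite[\S 4]{HatcherVogtmann, HatcherVogtmannRevised} for the fact that $\Tits(F_n)$ is Cohen--Macaulay of dimension $n-2$, whereas you sketch a join-decomposition-plus-induction argument. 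That sketch is the right shape but glosses over the genuinely delicate point, which is precisely the one Hatcher--Vogtmann had to work to establish: the poset of free factors strictly between two nested free factors $A \subsetneq B$ is \emph{not} naively ``the free factor complex of a quotient'' (free groups do not behave like vector spaces here); identifying it with a free factor complex of the right rank requires an argument. Since you would in any case be citing Hatcher--Vogtmann for the connectivity of the join factors, it is cleaner (and is what the paper does) to cite them for the Cohen--Macaulay statement itself and skip the sketch.
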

\begin{proof}
Since $\Upsilon$ is $\Aut(F_n)$-equivariant (see Remark \ref{remark:equivariance1}), the map $\Upsilon_{\ast}$
is $\Aut(F_n)$-equivariant.
To prove that it is an isomorphism, it is enough to show that the complement map
\[\Upsilon\colon \Poset(\ANSpheres'(M_{n,1})) \rightarrow \BTits(F_n)^{\opp}\]
satisfies the conditions of Proposition \ref{proposition:posetfiber} with $d = n-2$.
The first of these hypotheses is that $|\BTits(F_n)^{\opp}|$ is Cohen--Macaulay of dimension $(n-2)$,
which was proved\footnote{They actually proved that $|\Tits(F_n)|$ is Cohen--Macaulay of dimension $(n-2)$,
but the geometric realization of a poset is isomorphic to the geometric realization of its opposite poset.}
by Hatcher--Vogtmann \cite[\S 4]{HatcherVogtmann, HatcherVogtmannRevised}.  The second hypothesis is that for all $H \in \BTits(F_n)^{\opp}$,
the poset fiber $\Upsilon^{\leq H}$ is $\height(H)$-connected.  This poset fiber is the poset of simplices
$\sigma$ of $\ANSpheres'(M_{n,1})$ such that the fundamental group of the basepoint-containing
component of the complement contains $H$.  By definition, this is precisely the poset of simplices
of $\ANSpheres(M_{n,1},H)$ -- note that there is no $'$ in this since $H \neq 1$, so simplices
of $\ANSpheres(M_{n,1},H)$ automatically have a non-simply-connected basepoint-containing component
of their complement.  Theorem \ref{theorem:nspheresredcon} says that $\ANSpheres(M_{n,1},H)$ and hence
its poset of simplices $\Poset(\ANSpheres(M_{n,1},H))$ is $(n-1-\rank(H))$ connected.  Since we are regarding
$H$ as an element of the opposite poset $\BTits(F_n)^{\opp}$, we have $\height(H) = n-1-\rank(H)$,
so the lemma follows.
\end{proof} 

\subsection{Relative homology}
We now study the topology of $\ANSpheres'(M_{n,1})$.  The starting point is the following:

\begin{lemma}
\label{lemma:identifyrel}
For $n \geq 2$, we have an $\Aut(F_n)$-equivariant isomorphism
\[\HH_{n-1}(\ANSpheres(M_{n,1}),\ANSpheres'(M_{n,1});\Q) \cong \RH_{n-2}(\ANSpheres'(M_{n,1});\Q).\]
\end{lemma}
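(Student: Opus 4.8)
The plan is to read this isomorphism off from the long exact sequence in reduced homology of the pair $(\ANSpheres(M_{n,1}),\ANSpheres'(M_{n,1}))$, using the high connectivity of the ambient complex $\ANSpheres(M_{n,1})$.

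First I would check that $\ANSpheres'(M_{n,1}) \neq \emptyset$ for $n \geq 2$, so that this long exact sequence is available: a single nonseparating sphere $S$ in $M_{n,1}$ is a vertex of $\ANSpheres(M_{n,1})$ whose complementary component is $M_{n-1,3}$, and $\pi_1(M_{n-1,3}) \cong F_{n-1} \neq 1$, so $S$ lies in $\ANSpheres'(M_{n,1})$. A portion of the long exact sequence of the pair then reads
\[
\RH_{n-1}(\ANSpheres(M_{n,1});\Q) \to \HH_{n-1}(\ANSpheres(M_{n,1}),\ANSpheres'(M_{n,1});\Q) \stackrel{\partial}{\longrightarrow} \RH_{n-2}(\ANSpheres'(M_{n,1});\Q) \to \RH_{n-2}(\ANSpheres(M_{n,1});\Q).
\]

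Next I would apply Theorem \ref{theorem:nspheresredcon} with $b=1$ and $H=1$ (so that $\rank(H)=0 \leq n-1$), which shows that $\ANSpheres(M_{n,1})$ is $(n-1)$-connected. Hence $\RH_{n-1}(\ANSpheres(M_{n,1});\Q)=0$, and since $n \geq 2$ gives $0 \leq n-2 \leq n-1$ we also have $\RH_{n-2}(\ANSpheres(M_{n,1});\Q)=0$. Consequently the connecting homomorphism $\partial$ in the displayed sequence is an isomorphism, which is the claimed identification.

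It remains to note that this isomorphism is $\Aut(F_n)$-equivariant. By Remark \ref{remark:equivariance1} the $\Aut(F_n)$-action on $\Spheres(M_{n,1})$ restricts to actions on $\ANSpheres(M_{n,1})$ and $\ANSpheres'(M_{n,1})$, hence acts on the entire long exact sequence of the pair by functoriality, and the connecting homomorphism is natural; therefore $\partial$ is $\Aut(F_n)$-equivariant. The only real content here is the input from Theorem \ref{theorem:nspheresredcon}; there is no significant obstacle, and the sole point requiring a remark is the nonemptiness of $\ANSpheres'(M_{n,1})$ needed to invoke the reduced long exact sequence.
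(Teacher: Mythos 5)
Your proof is correct and matches the paper's argument: the paper also reads off the isomorphism from the long exact sequence of the pair, using Theorem \ref{theorem:nspheresredcon} to kill $\RH_{n-1}(\ANSpheres(M_{n,1});\Q)$ and $\RH_{n-2}(\ANSpheres(M_{n,1});\Q)$. Your extra remarks on nonemptiness of $\ANSpheres'(M_{n,1})$ and on the naturality of the connecting map are sensible bookkeeping that the paper leaves implicit.
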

\begin{proof}
In this proof, all homology has $\Q$-coefficients, though we remark that $\Q$ can be replaced
by any abelian group.  The long exact sequence of the pair $(\ANSpheres(M_{n,1}),\ANSpheres'(M_{n,1}))$ contains the segment
\begin{align*}
\RH_{n-1}(\ANSpheres(M_{n,1})) &\rightarrow \HH_{n-1}(\ANSpheres(M_{n,1}),\ANSpheres'(M_{n,1}))\\
&\quad\quad\quad \stackrel{\partial}{\rightarrow} \RH_{n-2}(\ANSpheres'(M_{n,1}))
\rightarrow \RH_{n-2}(\ANSpheres(M_{n,1})).
\end{align*}
Since $n \geq 2$, Theorem \ref{theorem:nspheresredcon} implies that
\[\RH_{n-1}(\ANSpheres(M_{n,1})) = \RH_{n-2}(\ANSpheres(M_{n,1})) = 0.\]
The lemma follows.
\end{proof}

To understand this relative homology group, the key result is as follows.
Recall that $\ANSpheres(M_{n,1})$ is $n$-dimensional.

\begin{lemma}
\label{lemma:subcomplex}
Fix some $n \geq 2$.  The following then hold:
\begin{itemize}
\item $\ANSpheres'(M_{n,1})$ is $(n-1)$-dimensional.
\item The $(n-2)$-skeletons of $\ANSpheres'(M_{n,1})$ and $\ANSpheres(M_{n,1})$ are equal.
\item The only $(n-1)$-simplices of $\ANSpheres(M_{n,1})$ that do not lie in $\ANSpheres'(M_{n,1})$ are the $(n-1)$-simplices
of $\NSpheres(M_{n,1})$.
\end{itemize}
\end{lemma}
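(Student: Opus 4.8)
\emph{Proof plan.} The engine behind all three statements is a single computation of fundamental groups of complements. For a simplex $\sigma$ of $\ANSpheres(M_{n,1})$, write $X_\sigma$ for the basepoint-containing component of the complement of $\sigma$; the plan is to show that $\pi_1(X_\sigma)$ is free of rank $n-1-\dim(\sigma)$ when $\sigma$ does not separate $M_{n,1}$, and free of rank $n-\dim(\sigma)$ when $\sigma$ does separate $M_{n,1}$. The tool is the dual graph $\Gamma(\sigma)$ together with Lemma \ref{lemma:dualgraph}. If $\sigma$ does not separate $M_{n,1}$, its complement is connected, so $\Gamma(\sigma)$ has a single vertex and $\dim(\sigma)+1$ loop edges; thus $\rank(\pi_1(\Gamma(\sigma))) = \dim(\sigma)+1$, and Lemma \ref{lemma:dualgraph} gives $n = (\dim(\sigma)+1) + \rank(\pi_1(X_\sigma))$. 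In particular this forces $\dim(\sigma) \le n-1$, recovering that $\NSpheres(M_{n,1})$ is at most $(n-1)$-dimensional.

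The separating case is where the real work lies. Now the separating core $\sigma'$ of $\sigma$ is nonempty, and by the definition of $\ANSpheres(M_{n,1})$ its complement consists of exactly two components $X$ and $Y$, with $X$ the basepoint-containing one and $\pi_1(Y)=1$. The key subclaim is that every sphere of the nonseparating periphery $\sigma'' = \sigma \setminus \sigma'$ lies in $X$: such a sphere $S$ is a loop edge of $\Gamma(\sigma)$, hence does not separate the component of $M_{n,1}$ cut along $\sigma \setminus S$ that contains it; but if $S$ lay in $Y$ then, after cutting $Y$ along the periphery spheres it contains, $S$ would sit inside a manifold of the form $M_{0,\ast}$ (because $\pi_1(Y)=1$), and every embedded $2$-sphere in $M_{0,\ast}$ separates it -- a contradiction. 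Given this, $X_\sigma$ is obtained by cutting $X$ along the $|\sigma''|$ periphery spheres, all nonseparating there, and $\Gamma(\sigma)$ has exactly two vertices $X_\sigma$ and $Y$ with $|\sigma''|$ loops at $X_\sigma$ and $|\sigma'|$ non-loop edges joining $X_\sigma$ to $Y$. Since $|\sigma'| + |\sigma''| = \dim(\sigma)+1$, we get $\rank(\pi_1(\Gamma(\sigma))) = \dim(\sigma)$, and Lemma \ref{lemma:dualgraph} gives $n = \dim(\sigma) + \rank(\pi_1(X_\sigma))$.

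With the computation in hand, the three bullets fall out. If $\dim(\sigma) \le n-2$ then $\rank(\pi_1(X_\sigma)) \ge 1$ in either case, so $\sigma$ lies in $\ANSpheres'(M_{n,1})$; hence every simplex of $\ANSpheres(M_{n,1})$ of dimension $\le n-2$ lies in $\ANSpheres'(M_{n,1})$, which gives the equality of $(n-2)$-skeleta. For an $(n-1)$-simplex $\sigma$ the computation gives $\rank(\pi_1(X_\sigma)) = 0$ precisely when $\sigma$ is nonseparating, and $\rank(\pi_1(X_\sigma)) = 1$ when $\sigma$ is separating; since the nonseparating $(n-1)$-simplices of $\ANSpheres(M_{n,1})$ are exactly the $(n-1)$-simplices of its subcomplex $\NSpheres(M_{n,1})$, this is the third bullet. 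Finally, any simplex of $\ANSpheres'(M_{n,1})$ has $\rank(\pi_1(X_\sigma)) \ge 1$, so by the computation $\dim(\sigma) \le n-1$; thus $\ANSpheres'(M_{n,1})$ is at most $(n-1)$-dimensional, and to see it is exactly $(n-1)$-dimensional I would produce a separating $(n-1)$-simplex whose complement has rank-$1$ fundamental group, e.g.\ a rank-$3$ separating core cutting off a copy of $M_{0,3}$ together with $n-3$ nonseparating spheres added on the basepoint side. All of this is $\Aut(F_n)$-equivariant for free, since $\ANSpheres'(M_{n,1})$, $\NSpheres(M_{n,1})$, and $X_\sigma$ are all defined naturally from $\sigma$.

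I expect the separating-case dual-graph analysis to be the main obstacle: establishing that the nonseparating-periphery spheres all lie on the basepoint side, and then counting vertices and edges of $\Gamma(\sigma)$ correctly. Once the rank formula $n = \dim(\sigma) + \rank(\pi_1(X_\sigma))$ (respectively $n = \dim(\sigma) + 1 + \rank(\pi_1(X_\sigma))$) is established, the rest is bookkeeping.
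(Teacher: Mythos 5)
Your argument is essentially the paper's: both proofs determine exactly which simplices of $\ANSpheres(M_{n,1})$ lie in $\ANSpheres'(M_{n,1})$ by computing the rank of the fundamental group of the basepoint-containing complementary component from the dual graph via Lemma \ref{lemma:dualgraph}, splitting into the nonseparating and separating cases ($n=\dim(\sigma)+1+\rank(\pi_1(X_\sigma))$ versus $n=\dim(\sigma)+\rank(\pi_1(X_\sigma))$), after which the three bullets are bookkeeping. You are in fact a bit more careful than the paper at one point: the definition of $\ANSpheres(M_{n,1})$ constrains the complement of the \emph{separating core}, whereas the rank count needs the complement of $\sigma$ itself to have exactly two components with the non-basepoint one simply connected; your subclaim that the periphery spheres all lie on the basepoint side supplies exactly this, and the paper passes over it in silence. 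One caveat concerns the exactness of the first bullet: your proposed $(n-1)$-simplex of $\ANSpheres'(M_{n,1})$ (a rank-$3$ separating core cutting off an $M_{0,3}$, plus $n-3$ periphery spheres) requires $n\geq 3$. For $n=2$ a separating $1$-simplex would have complementary piece $Y\cong M_{0,2}\cong S^2\times[0,1]$, forcing its two spheres to be isotopic, so no such simplex exists and $\ANSpheres'(M_{2,1})$ is in fact $0$-dimensional; the paper's proof likewise only establishes the characterization (hence the bound $\dim\ANSpheres'(M_{n,1})\leq n-1$) and never addresses existence. Since only that bound, together with the second and third bullets, feeds into Lemma \ref{lemma:identifychains}, the edge case is harmless, but your write-up should either restrict the "exactly $(n-1)$-dimensional" claim to $n\geq 3$ or note the $n=2$ degeneration explicitly.
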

\begin{proof}
Since $\ANSpheres'(M_{n,1})$ is a subcomplex of $\ANSpheres(M_{n,1})$, it is enough to characterize which simplices
of $\ANSpheres(M_{n,1})$ lie in $\ANSpheres'(M_{n,1})$.  Let $\sigma$ be a $k$-simplex of $\ANSpheres(M_{n,1})$.  
Write $\sigma = \{S_0,\ldots,S_k\}$.  Assume first that the union of the $S_i$ does not separate $M_{n,1}$, so $\sigma$ is a simplex of $\NSpheres(M_{n,1})$.
We thus have $k \leq n-1$, and the only component of the complement of $\sigma$ is homeomorphic to
$M_{n-(k+1),1+2(k+1)}$.  This is simply-connected if and only if $k = n-1$, so we see that $\sigma$ lies in
$\ANSpheres'(M_{n,1})$ if and only if $k \leq n-2$, as claimed.

Assume next that the union of the $S_i$ does separate $M_{n,1}$.  Since $\sigma$ is a simplex of
$\ANSpheres(M_{n,1})$, there are two components $X$ and $Y$ of the complement of $\sigma$.  Moreover,
letting $X$ be the basepoint-containing component we have $\pi_1(Y) = 1$.  
The dual graph $\Gamma(\sigma)$ has two vertices and $(k+1)$ edges, so its fundamental group
is $F_k$.  Letting $m$ be such that $\pi_1(X,\ast) \cong F_m$, Lemma \ref{lemma:dualgraph} says that
\[n = \pi_1(\Gamma(\sigma)) + m + 0 = k + m.\]
The simplex $\sigma$ lies in $\ANSpheres'(M_{n,1})$ if and only if $m \geq 1$, and the above equality
shows that this holds if and only if $k \leq n-1$.  The lemma follows.
\end{proof}

This has the following consequence:

\begin{lemma}
\label{lemma:identifychains}
For $n \geq 2$ and $k \geq 0$, we have $\Aut(F_n)$-equivariant isomorphisms
\[\CC_k(\ANSpheres(M_{n,1}),\ANSpheres'(M_{n,1});\Q) = \begin{cases}
\CC_{n-1}(\NSpheres(M_{n,1});\Q) & \text{if $k=n-1$},\\
\CC_{n}(\ANSpheres(M_{n,1});\Q)  & \text{if $i=n$}, \\
0  & \text{otherwise}.\end{cases}\]
\end{lemma}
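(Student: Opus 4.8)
The plan is to deduce the statement directly from Lemma \ref{lemma:subcomplex}, since the relative simplicial chain group $\CC_k(Y,X;\Q)$ of a pair $X \subseteq Y$ of simplicial complexes is by definition the free $\Q$-module on the set of $k$-simplices of $Y$ not contained in $X$. First I would record that the $\Aut(F_n)$-action on $\Spheres(M_{n,1})$ preserves each of the subcomplexes $\ANSpheres(M_{n,1})$, $\ANSpheres'(M_{n,1})$, and $\NSpheres(M_{n,1})$ (the latter two because the action permutes isotopy classes of sphere systems while respecting separation properties and the homeomorphism types of complementary components), so any identification of chain groups that is induced by an equality of the underlying sets of simplices is automatically $\Aut(F_n)$-equivariant. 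This reduces the lemma to a bookkeeping statement about which $k$-simplices of $\ANSpheres(M_{n,1})$ avoid the subcomplex $\ANSpheres'(M_{n,1})$.

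Then I would go through the four ranges of $k$ in turn. For $k \geq n+1$ there is nothing to check: $\ANSpheres(M_{n,1})$ is $n$-dimensional, so it has no $k$-simplices and $\CC_k(Y,X;\Q) = 0$. For $0 \leq k \leq n-2$, the first two bullets of Lemma \ref{lemma:subcomplex} say that the $(n-2)$-skeleton of $\ANSpheres'(M_{n,1})$ coincides with that of $\ANSpheres(M_{n,1})$, so every $k$-simplex of $\ANSpheres(M_{n,1})$ already lies in $\ANSpheres'(M_{n,1})$, whence $\CC_k(Y,X;\Q) = 0$. For $k = n-1$, the third bullet of Lemma \ref{lemma:subcomplex} identifies the set of $(n-1)$-simplices of $\ANSpheres(M_{n,1})$ not lying in $\ANSpheres'(M_{n,1})$ with the set of $(n-1)$-simplices of $\NSpheres(M_{n,1})$, which is a subcomplex of $\ANSpheres(M_{n,1})$; this yields $\CC_{n-1}(Y,X;\Q) = \CC_{n-1}(\NSpheres(M_{n,1});\Q)$. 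Finally, for $k = n$, the first bullet of Lemma \ref{lemma:subcomplex} says that $\ANSpheres'(M_{n,1})$ is $(n-1)$-dimensional and hence contains no $n$-simplices, so every $n$-simplex of $\ANSpheres(M_{n,1})$ contributes and $\CC_n(Y,X;\Q) = \CC_n(\ANSpheres(M_{n,1});\Q)$.

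I do not expect any genuine obstacle here: all of the content lies in Lemma \ref{lemma:subcomplex}, and the present lemma is simply the translation of that structural description into the language of relative chain complexes, together with the routine observation that the bijections of simplex sets involved are $\Aut(F_n)$-equivariant. (I would also note in passing that the displayed case ``$i=n$'' in the statement is a typo for ``$k=n$'', which the argument above treats.)
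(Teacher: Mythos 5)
Your proof is correct and follows essentially the same route as the paper: the paper likewise dispatches $k \geq n+1$ by noting that $\ANSpheres(M_{n,1})$ is $n$-dimensional and then cites Lemma \ref{lemma:subcomplex} for the remaining cases, which you have simply unpacked case by case (and you are right that ``$i=n$'' in the statement is a typo for ``$k=n$'').
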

\begin{proof}
Since $\ANSpheres(M_{n,1})$ is $n$-dimensional, we have
\[\CC_k(\ANSpheres(M_{n,1}),\ANSpheres'(M_{n,1});\Q) = 0 \quad \text{if $k \geq n+1$}.\]
The remainder of the lemma follows from Lemma \ref{lemma:subcomplex}.
\end{proof}

Combining these results we obtain the following presentation of $\St(F_n)$.

\begin{theorem} 
\label{theorem:stpresentation}
For $n \geq 2$, we have an exact sequence of $\Aut(F_n)$-modules
\[\CC_n(\ANSpheres(M_{n,1});\Q) \to \CC_{n-1}(\NSpheres(M_{n,1});\Q) \to \St(F_n) \to 0.\]
\end{theorem}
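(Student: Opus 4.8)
The plan is to assemble the three preceding lemmas into a four-term exact sequence. First I would invoke Lemma~\ref{lemma:identifychains}, which tells us that the relative chain complex $\CC_\ast(\ANSpheres(M_{n,1}),\ANSpheres'(M_{n,1});\Q)$ is concentrated in degrees $n-1$ and $n$, where it is $\CC_{n-1}(\NSpheres(M_{n,1});\Q)$ and $\CC_n(\ANSpheres(M_{n,1});\Q)$ respectively. Write $\partial$ for the relative differential $\CC_n(\ANSpheres(M_{n,1});\Q) \to \CC_{n-1}(\NSpheres(M_{n,1});\Q)$, which under the identifications of Lemma~\ref{lemma:identifychains} is just the simplicial boundary map of $\ANSpheres(M_{n,1})$ followed by the quotient onto relative chains. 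Since the degree-$(n-2)$ piece of the relative complex vanishes, every chain in degree $n-1$ is a relative cycle, so
\[\HH_{n-1}(\ANSpheres(M_{n,1}),\ANSpheres'(M_{n,1});\Q) = \coker(\partial),\]
and we obtain an exact sequence
\[\CC_n(\ANSpheres(M_{n,1});\Q) \xrightarrow{\partial} \CC_{n-1}(\NSpheres(M_{n,1});\Q) \to \HH_{n-1}(\ANSpheres(M_{n,1}),\ANSpheres'(M_{n,1});\Q) \to 0.\]

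Next I would rewrite the last term. Lemma~\ref{lemma:identifyrel} supplies an $\Aut(F_n)$-equivariant isomorphism $\HH_{n-1}(\ANSpheres(M_{n,1}),\ANSpheres'(M_{n,1});\Q) \cong \RH_{n-2}(\ANSpheres'(M_{n,1});\Q)$, and Lemma~\ref{lemma:spaniso} supplies an $\Aut(F_n)$-equivariant isomorphism $\RH_{n-2}(\ANSpheres'(M_{n,1});\Q) \cong \St(F_n)$ induced by the complement map $\Upsilon$. Composing these two isomorphisms with the surjection in the displayed sequence above yields the desired exact sequence
\[\CC_n(\ANSpheres(M_{n,1});\Q) \to \CC_{n-1}(\NSpheres(M_{n,1});\Q) \to \St(F_n) \to 0.\]

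Finally I would check $\Aut(F_n)$-equivariance of the whole sequence: the chain groups carry the simplicial $\Aut(F_n)$-actions coming from the action on $\Spheres(M_{n,1})$, which preserves each of $\ANSpheres(M_{n,1})$, $\ANSpheres'(M_{n,1})$, and $\NSpheres(M_{n,1})$; the differential $\partial$ and the connecting homomorphism of the pair are equivariant by naturality; and the two isomorphisms just cited are equivariant by hypothesis. There is no genuine obstacle here, since all the substance lives in the three lemmas already established; the only point requiring care is the bookkeeping needed to confirm that the identification $\CC_{n-1}(\ANSpheres(M_{n,1}),\ANSpheres'(M_{n,1})) \cong \CC_{n-1}(\NSpheres(M_{n,1}))$ of Lemma~\ref{lemma:subcomplex} is compatible with the group actions and with the relative differential, so that the assembled sequence is both well-defined and $\Aut(F_n)$-equivariant.
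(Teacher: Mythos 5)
Your proposal is correct and matches the paper's own argument: both use Lemma~\ref{lemma:identifychains} to identify the relative chain complex of $(\ANSpheres(M_{n,1}),\ANSpheres'(M_{n,1}))$ as $\CC_n(\ANSpheres(M_{n,1});\Q) \to \CC_{n-1}(\NSpheres(M_{n,1});\Q) \to 0$, so that $\HH_{n-1}$ of the pair is the cokernel of the relative differential, and then identify that relative homology group with $\St(F_n)$ via Lemmas~\ref{lemma:identifyrel} and~\ref{lemma:spaniso}. The equivariance bookkeeping you add at the end is implicit in the paper's statement that those lemmas give $\Aut(F_n)$-equivariant isomorphisms, so nothing further is needed.
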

\begin{proof}
In this proof, all homology has $\Q$-coefficients.  Combining Lemmas \ref{lemma:spaniso} and \ref{lemma:identifyrel}, we get an 
$\Aut(F_n)$-equivariant isomorphism
\[\HH_{n-1}(\ANSpheres(M_{n,1}),\ANSpheres'(M_{n,1})) \cong \St(F_n).\]
To compute this relative homology group, consider the relevant terms in the simplicial chain complex:
\begin{align*}
\CC_n(\ANSpheres(M_{n,1}),\ANSpheres'(M_{n,1})) \longrightarrow &\CC_{n-1}(\ANSpheres(M_{n,1}),\ANSpheres'(M_{n,1})) \\
&\quad\quad\quad \quad \quad \longrightarrow \CC_{n-2}(\ANSpheres(M_{n,1}),\ANSpheres'(M_{n,1})).
\end{align*}
Lemma \ref{lemma:identifychains} shows that these terms are equal to
\[\CC_n(\ANSpheres(M_{n,1})) \rightarrow \CC_{n-1}(\NSpheres(M_{n,1})) \rightarrow 0.\]
The theorem follows.
\end{proof}

\subsection{Flatness}

The following lemma shows that this presentation can be used to compute the low-dimensional rational homology
of $\Aut(F_n)$ with coefficients in $\St(F_n)$:

\begin{lemma}
\label{lemma:flatness}
Fix some $n \geq 2$.  Then $\CC_n(\ANSpheres(M_{n,1});\Q)$ and $\CC_{n-1}(\NSpheres(M_{n,1});\Q)$ are
flat $\Z[\Aut(F_n)]$-modules.
\end{lemma}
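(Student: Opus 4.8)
The plan is to show that each of these chain groups is, as a $\Z[\Aut(F_n)]$-module, a direct sum of modules induced from \emph{finite} subgroups of $\Aut(F_n)$, and then to exploit the fact that the rational group algebra of a finite group is semisimple (Maschke) to deduce that such induced modules are flat over $\Z[\Aut(F_n)]$.

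To set this up, recall that for a simplicial complex $X$ with a simplicial $\Aut(F_n)$-action and any $k$, the chain group $\CC_k(X;\Q)$ is naturally isomorphic as a $\Q[\Aut(F_n)]$-module to $\bigoplus_{[\sigma]} \Z[\Aut(F_n)]\otimes_{\Z[G_\sigma]} \Q_\sigma$, where the sum runs over $\Aut(F_n)$-orbits of $k$-simplices, $G_\sigma$ is the stabilizer of a chosen representative $\sigma$, and $\Q_\sigma$ is the one-dimensional $\Q$-vector space spanned by $\sigma$ with $G_\sigma$ acting through the sign of the permutation it induces on the vertices of $\sigma$. Since a direct sum of flat modules is flat, $\Z[\Aut(F_n)]$ is free as a right $\Z[G_\sigma]$-module, and $N\otimes_{\Z[\Aut(F_n)]}(\Z[\Aut(F_n)]\otimes_{\Z[G_\sigma]}\Q_\sigma)\cong N\otimes_{\Z[G_\sigma]}\Q_\sigma$ for every right $\Z[\Aut(F_n)]$-module $N$, it suffices to check that $\Q_\sigma$ is flat over $\Z[G_\sigma]$ whenever $G_\sigma$ is finite. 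For finite $G_\sigma$ this is easy: Maschke's theorem makes $\Q[G_\sigma]$ semisimple, so $\Q_\sigma$ is projective, and in particular flat, over $\Q[G_\sigma]$; and $\Q[G_\sigma]=\Q\otimes_\Z\Z[G_\sigma]$ is flat over $\Z[G_\sigma]$ because $\Q$ is flat over $\Z$. Composing shows $\Q_\sigma$ is flat over $\Z[G_\sigma]$. Thus the lemma reduces to showing that the stabilizer in $\Aut(F_n)$ of an $n$-simplex of $\ANSpheres(M_{n,1})$, or of an $(n-1)$-simplex of $\NSpheres(M_{n,1})$, is finite.

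For this last point I would argue as follows. First, the geometric input, already visible in the proof of Lemma \ref{lemma:subcomplex}: cutting $M_{n,1}$ along such a simplex $\sigma$ produces a disjoint union of simply connected pieces, each homeomorphic to a punctured $3$-sphere $M_{0,b_j}$. (For an $(n-1)$-simplex of $\NSpheres(M_{n,1})$ the complement is the single piece $M_{0,2n+1}$; for an $n$-simplex of $\ANSpheres(M_{n,1})$, the rank count in Lemma \ref{lemma:subcomplex} forces both complementary pieces to be simply connected.) Next, by Laudenbach's theorem recalled in the introduction, the action of $\Mod(M_{n,1})$ on isotopy classes of spheres factors through $\Aut(F_n)$ with kernel the finite group $(\Z/2)^n$, so the stabilizer of $\sigma$ in $\Aut(F_n)$ is finite if and only if $\Mod(M_{n,1})_\sigma$ is finite. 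Finally, one analyzes $\Mod(M_{n,1})_\sigma$ by cutting along $\sigma$: after passing to the finite-index subgroup fixing each sphere of $\sigma$ together with its two sides, the cutting homomorphism lands in $\prod_j \Mod(M_{0,b_j})$ and has kernel generated by twists about the spheres of $\sigma$, which is a finite $2$-group. Since a punctured $3$-sphere has finite mapping class group (rel boundary) --- generated by the sphere twists about its boundary spheres --- it follows that $\Mod(M_{n,1})_\sigma$, and hence $G_\sigma=\Aut(F_n)_\sigma$, is finite.

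The step I expect to be the main obstacle is this finiteness of stabilizers: one must handle the cutting map carefully (reparametrizations of the boundary spheres, sphere twists) and cite the finiteness of the mapping class group of a punctured $3$-sphere in exactly the form needed. Everything else is formal --- the orbit decomposition of simplicial chains together with the semisimplicity of $\Q[G_\sigma]$.
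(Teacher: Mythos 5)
Your proposal is correct and follows essentially the same route as the paper: reduce flatness to finiteness of simplex stabilizers, then prove finiteness by cutting $M_{n,1}$ along the sphere system and invoking Laudenbach's result that the mapping class group of a punctured $S^3$ is a finite group generated by sphere twists (together with the fact that the $\Mod(M_{n,1})$-action factors through $\Aut(F_n)$ with finite kernel). The only differences are cosmetic: you prove the homological-algebra reduction directly via the orbit decomposition into induced sign modules and Maschke, where the paper cites \cite[Lemma 3.2]{ChurchFarbPutmanVanish}, and you treat the $n$-simplices of $\ANSpheres(M_{n,1})$ directly via a cutting homomorphism, whereas the paper reduces them to $(n-1)$-simplices of $\NSpheres(M_{n,1})$ and works with the gluing map $\Mod(M_{0,2n+1})\rightarrow\Mod(M_{n,1})$ to sidestep the well-definedness issues with cutting that you flagged.
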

\begin{proof}
It is enough to prove that if $\sigma$ is either an $n$-simplex of $\ANSpheres(M_{n,1})$ or an $(n-1)$-simplex
of $\NSpheres(M_{n,1})$, then the $\Aut(F_n)$-stabilizer of $\sigma$ is finite.  See, e.g., \cite[Lemma 3.2]{ChurchFarbPutmanVanish}.\footnote{The key
ingredients in the proof are Shapiro's Lemma and the fact that finite groups have trivial homology in positive degrees with respect
to coefficients that are vector spaces over fields of characteristic $0$.}  Since the action of
the mapping class group $\Mod(M_{n,1})$ on these simplicial complexes factors through $\Aut(F_n)$, this
will follow if we can prove that the $\Mod(M_{n,1})$-stabilizer $G_{\sigma}$ of $\sigma$ is finite.

Since every $n$-simplex of $\ANSpheres(M_{n,1})$ contains an $(n-1)$-simplex of $\NSpheres(M_{n,1})$, it
is enough to deal with the case where $\sigma$ is an $(n-1)$-simplex of $\NSpheres(M_{n,1})$.  Write
$\sigma = \{S_0,\ldots,S_{n-1}\}$.  Define $G'_{\sigma}$ to be the subgroup of $\Mod(M_{n,1})$ consisting
of mapping classes $f$ satisfying the following condition:
\begin{itemize}
\item For each $0 \leq i \leq n-1$, the mapping class $f$ can be realized by a diffeomorphism of $M_{n,1}$ that
fixes a tubular neighborhood of $S_i$.
\end{itemize}
The group $G'_{\sigma}$ is a finite-index subgroup\footnote{This uses the fact that homotopy implies ambient isotopy for
collections of pairwise non-homotopic disjoint spheres in $M_{n,1}$, which was proved by Laudenbach \cite{LaudenbachPaper, LaudenbachBook}.
The differences between $G_{\sigma}$ and $G'_{\sigma}$ are that
elements of $G_{\sigma}$ can permute the $S_i$ and also ``flip them around'', i.e., reverse the directions of arcs
transverse to them.} of $G_{\sigma}$, so it is enough to prove that $G'_{\sigma}$ is finite.  Cutting $M_{n,1}$ open
along the $S_i$ yields $M_{0,2n+1}$.  Reversing this, there is a homomorphism $\psi\colon \Mod(M_{0,2n+1}) \rightarrow \Mod(M_{n,1})$
induced by the map taking diffeomorphisms of $M_{0,2n+1}$ that are the identity on $\partial M_{0,2n+1}$ and gluing boundary
components together in pairs.  The image of $\psi$ is $G'_{\sigma}$, so we are reduced to proving that $\Mod(M_{0,2n+1})$
is a finite group.  

In fact, Laudenbach (\cite{LaudenbachPaper, LaudenbachBook}; see \cite{Landgraf} for an alternate approach using a
``Birman exact sequence'') proved that $\Mod(M_{0,2n+1}) \cong (\Z/2)^{2n}$, generated by sphere twists about $2n$ of
the boundary components.  There is no need to take the sphere twist about the remaining boundary component since the product
of sphere twists about all the boundary components of $M_{0,2n+1}$ is trivial; see \cite[p.\ 214--215]{HatcherWahl3Manifolds} for
a simple proof of this.  The lemma follows.
\end{proof}

\section{Proof of main theorem}
\label{section:mainproof}

We close the paper by finally proving Theorem \ref{maintheorem:steinberghomology} which asserts
that $\HH_k(\Aut(F_n);\St(F_n)) = 0$ for $k \in \{0,1\}$ and $n \geq 2$.

\begin{proof}[Proof of Theorem \ref{maintheorem:steinberghomology}]
Let $n \geq 2$, and consider the presentation of $\St(F_n)$ from Theorem \ref{theorem:stpresentation}:
\[\CC_n(\ANSpheres(M_{n,1});\Q) \to \CC_{n-1}(\NSpheres(M_{n,1});\Q) \to \St(F_n) \to 0.\]
By Lemma \ref{lemma:flatness}, both $\CC_n(\ANSpheres(M_{n,1});\Q)$ and $\CC_{n-1}(\NSpheres(M_{n,1});\Q)$ are flat
$\Z[\Aut(F_n)]$-modules.  Complete this to a flat resolution of $\St(F_n)$:
\[\cdots \rightarrow \cF_2 \rightarrow \CC_n(\ANSpheres(M_{n,1});\Q) \rightarrow \CC_{n-1}(\NSpheres(M_{n,1});\Q) \rightarrow \St(F_n) \rightarrow 0.\]
For a group $G$ and a $\Z[G]$-module $M$, write $M_G$ for the $G$-coinvariants of $M$.  We can use our flat
resolution to compute $\HH_{\bullet}(\Aut(F_n);\St(F_n))$, which is the homology of the chain complex
\[\cdots \rightarrow (\cF_2)_{\Aut(F_n)} \rightarrow (\CC_n(\ANSpheres(M_{n,1});\Q))_{\Aut(F_n)} \rightarrow (\CC_{n-1}(\NSpheres(M_{n,1});\Q)_{\Aut(F_n)} \rightarrow 0.\]
To prove the theorem, it suffices to prove that
\[(\CC_n(\ANSpheres(M_{n,1});\Q))_{\Aut(F_n)} = (\CC_{n-1}(\NSpheres(M_{n,1});\Q)_{\Aut(F_n)} = 0.\]
The vector space $\CC_n(\ANSpheres(M_{n,1});\Q)$ (resp.\ $\CC_{n-1}(\NSpheres(M_{n,1});\Q)$)
is spanned by oriented $n$-simplices of $\ANSpheres(M_{n,1})$ (resp.\ $(n-1)$-simplices of $\NSpheres(M_{n,1})$).
For such an oriented simplex $\sigma$, we will prove below in Lemma \ref{lemma:fliplemma} that there exists
some $f \in \Aut(F_n)$ such that $f(\sigma)$ equals $\sigma$, but with the opposite orientation.  This will imply
that the images of $\sigma$ and $-\sigma$ in the $\Aut(F_n)$-coinvariants are equal, and thus that the image
of $2\sigma$ in the $\Aut(F_n)$-coinvariants is $0$.  Since we are working over $\Q$, this implies that the
image of $\sigma$ in the $\Aut(F_n)$-coinvariants is $0$, so these coinvariants are $0$, as desired.
\end{proof}

\begin{lemma} 
\label{lemma:fliplemma}
Fix some $n \geq 2$.  Let $\sigma$ be either an oriented $n$-simplex of $\ANSpheres(M_{n,1})$ or an oriented $(n-1)$-simplex
of $\NSpheres(M_{n,1})$.  Then there exists some $f \in \Aut(F_n)$ such that $f(\sigma)$ equals $\sigma$, but with
the opposite orientation.
\end{lemma}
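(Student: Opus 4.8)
The plan is to produce $f$ as (the image in $\Aut(F_n)$ of) a mapping class of $M_{n,1}$ that stabilizes $\sigma$ as a set but permutes its vertices by a single transposition. Since $\sigma$ has $\dim(\sigma)+1 \geq n \geq 2$ vertices, a transposition of two of them is an odd permutation, so any such $f$ sends $\sigma$ to itself with the opposite orientation. Because the action of $\Mod(M_{n,1})$ on $\Spheres(M_{n,1})$ factors through $\Aut(F_n)$ (as recalled in the introduction), it suffices to build such a mapping class.

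The construction is by cutting and regluing. Cut $M_{n,1}$ open along all of the spheres of $\sigma$, obtaining a (possibly disconnected) manifold $C$ together with the regluing data: for each $S_i \in \sigma$ there is a pair of boundary spheres $S_i^{+}, S_i^{-}$ of $C$ glued by a diffeomorphism $g_i$ to recover $S_i$, and $\partial M_{n,1}$ is one further boundary sphere of $C$. If $\sigma$ is an $(n-1)$-simplex of $\NSpheres(M_{n,1})$, then $C \cong M_{0,2n+1}$ is connected and simply connected, and any two spheres $S_a, S_b \in \sigma$ may be used below. If $\sigma$ is an $n$-simplex of $\ANSpheres(M_{n,1})$, then the separating core of $\sigma$ cuts off a simply connected piece $Y$ which is also a component of the complement of $\sigma$, so $M_{n,1}\setminus\sigma$ has a component that is a copy of $S^3$ with finitely many balls removed; combining this with the count of Lemma \ref{lemma:dualgraph} one can locate two spheres $S_a,S_b\in\sigma$ that sit symmetrically, in the precise sense that there is a diffeomorphism of $C$ fixing the $\partial M_{n,1}$-boundary pointwise and fixing every $S_i^{\pm}$ with $i \neq a,b$, while interchanging the pair $\{S_a^{+}, S_a^{-}\}$ with the pair $\{S_b^{+}, S_b^{-}\}$. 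Such a diffeomorphism of $C$ exists by the change of coordinates principle for copies of $S^3$ with balls removed (compare the use of \cite[\S 1.3]{FarbMargalitPrimer} in the proof of Lemma \ref{lemma:changeofcoordinates}).

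Next I would arrange this diffeomorphism of $C$ to be compatible with the regluing, so that it descends to a diffeomorphism $f$ of $M_{n,1}$. Concretely, choose the diffeomorphism $C \to C$ to carry $S_a^{+}$ to $S_b^{+}$ by an arbitrary orientation-preserving diffeomorphism $\psi$ and then force its value $S_a^{-}\to S_b^{-}$ to be $g_b \psi g_a^{-1}$ (and symmetrically in the other direction), which makes it intertwine $g_a$ and $g_b$; this is possible because any prescribed orientation-preserving boundary behavior of this kind on a copy of $S^3$ with balls removed is realized by an orientation-preserving diffeomorphism of it. The resulting $f$ is an orientation-preserving diffeomorphism of $M_{n,1}$ fixing $\partial M_{n,1}$ pointwise with $f(S_a)=S_b$, $f(S_b)=S_a$, and $f(S_i)=S_i$ for $i\neq a,b$; hence $[f]\in\Mod(M_{n,1})$, and its image in $\Aut(F_n)$ satisfies $f(\sigma)=\sigma$ with the opposite orientation, as required.

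The main obstacle is the bookkeeping in the last two steps: verifying that a suitable pair $\{S_a,S_b\}$ always exists for the $n$-simplices of $\ANSpheres(M_{n,1})$ (which amounts to reading off the dual graph $\Gamma(\sigma)$ and its separating core), and checking carefully that the boundary behavior of the diffeomorphism of $C$ can be chosen compatibly with the regluing maps $g_i$, orientation-preservingly, and fixing $\partial M_{n,1}$ pointwise, so that it genuinely glues up to an element of $\Mod(M_{n,1})$.
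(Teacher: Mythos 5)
Your strategy is the same as the paper's: cut $M_{n,1}$ open along the spheres of $\sigma$, build a diffeomorphism of the cut-open manifold that swaps a pair of the resulting boundary spheres while fixing the rest, arrange compatibility with the regluing, and observe that the induced element of $\Mod(M_{n,1})$ acts on the vertex set of $\sigma$ by a transposition, hence reverses orientation. The two ``bookkeeping'' items you single out at the end are indeed exactly where the paper's proof does its work, so let me comment on them.

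The genuinely nontrivial point you leave unresolved is the existence of a suitable pair $\{S_a,S_b\}$ in the $n$-simplex case. For the glued-up diffeomorphism of $C = X\sqcup Y$ to exist, $S_a$ and $S_b$ must both lie in the separating core of $\sigma$ (so that $S_a^{+},S_b^{+}$ lie in the same component of $C$ and $S_a^{-},S_b^{-}$ in the other); swapping a separating-core sphere with a nonseparating-periphery sphere cannot be realized by a diffeomorphism of $C$. The paper addresses this by showing that $Y\cong M_{0,r}$ with $r\geq 3$: $r\geq 2$ because otherwise the single separating-core sphere would bound the ball $Y$ and be inessential, and $r\neq 2$ because otherwise the two separating-core spheres would be isotopic through $Y\cong S^2\times I$, violating the definition of a simplex of $\Spheres(M_{n,1})$. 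Your phrase ``combining this with the count of Lemma \ref{lemma:dualgraph} one can locate two spheres $S_a,S_b\in\sigma$ that sit symmetrically'' gestures at this but does not supply the argument, and the argument does not fall out of Lemma \ref{lemma:dualgraph} alone.

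Two smaller remarks. First, you invoke a ``change of coordinates principle for copies of $S^3$ with balls removed,'' but you only identified $Y$ as a punctured $S^3$; for this to cover $C$ you would also need $X$ to be a punctured $S^3$, which is true for $n$-simplices of $\ANSpheres(M_{n,1})$ by Lemma \ref{lemma:subcomplex} but is worth saying (and is false for lower-dimensional simplices of $\ANSpheres(M_{n,1})$ — the paper's argument is phrased so as to work for any $M_{n',b'}$, which is why it handles all simplices uniformly). Second, your phrase ``interchanging the pair $\{S_a^{+},S_a^{-}\}$ with the pair $\{S_b^{+},S_b^{-}\}$'' should be sharpened to say that $\pm$ labels are interchanged compatibly with the two components of $C$; this is automatic once $S_a,S_b$ are both in the separating core, but as stated it is ambiguous. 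Apart from these points, the regluing step (choosing $\Phi|_{S_a^{-}} = g_b\circ\Phi|_{S_a^{+}}\circ g_a^{-1}$) is correct and is the explicit version of the paper's ``isotoping $\psi$ if necessary to make sure its behavior on $\beta_1\cup\beta_1'$ matches up with its behavior on $\beta_2\cup\beta_2'$.''
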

\begin{proof}
For $k \geq 2$, we will prove that this holds more generally for all oriented $(k-1)$-simplices $\sigma$ of the complex $\ANSpheres(M_{n,1})$.
The action of the mapping class group $\Mod(M_{n,1})$ on $\ANSpheres(M_{n,1})$ factors through $\Aut(F_n)$, so it is enough
to find some $\phi \in \Mod(M_{n,1})$ such that $\phi(\sigma)$ equals $\sigma$, but with the opposite orientation.
Let $\sigma = \{S_1,\ldots,S_k\}$, ordered in a way compatible with the orientation of $\sigma$.

There are either one or two components in the complement of $\sigma$.  Assume first that there is one component in the complement, which
we call $X$.  We then have that $X \cong M_{n-k,2k+1}$.  Enumerate the components of $\partial X$ as 
$\{\partial, \beta_1, \beta_1', \ldots, \beta_k, \beta_k'\}$, where $\partial$ is the component of $\partial M_{n,1}$ and
$\beta_i$ and $\beta_i'$ are the two components that glue together to form $S_i$.  We can then find an orientation-preserving
diffeomorphism $\psi\colon X \rightarrow X$ such that
\[\psi(\beta_1) = \beta_2 \quad \text{and} \quad \psi(\beta_2) = \beta_1 \quad \text{and} \quad 
\psi(\beta_1') = \beta_2' \quad \text{and} \quad \psi(\beta_2') = \beta_1'\]
and such that
\[\psi|_{\partial} = \psi|_{\beta_i} = \psi|_{\beta'_i} = \text{id} \quad \text{for $3 \leq i \leq k$}.\]
Isotoping $\psi$ if necessary to make sure its behavior on $\beta_1 \cup \beta'_1$ matches up with its
behavior on $\beta_2 \cup \beta'_2$, we can glue the boundary components of $X$ back together and get
from $\psi$ an induced diffeomorphism $\phi\colon M_{n,1} \rightarrow M_{n,1}$ that swaps $S_1$ and $S_2$
while fixing $S_i$ for $3 \leq i \leq k$.  It follows that $\phi$ takes $\sigma$ to $\sigma$ but with the opposite
orientation, as desired.

Assume now that there are two components $X$ and $Y$ in the complement of $\sigma$,
with $X$ the basepoint-containing component.  For some $r \geq 3$ and $h \geq 0$
with $r+h = k$, we have
\[Y \cong M_{0,r} \quad \text{and} \quad X \cong M_{n-k+1,1+r+2h}.\]
The condition $r \geq 3$ holds since all the $S_i$ are nonseparating and pairwise non-isotopic.  Enumerate
the components of $\partial X$ and $\partial Y$ as
\[\{\partial,\beta_1,\ldots,\beta_{r},\delta_1,\delta'_1,\ldots,\delta_{h},\delta'_{h}\}
\quad \text{and} \quad \{\beta_1',\ldots,\beta_{r}'\},\]
respectively, where the enumeration is chosen such that the following hold when $X$
and $Y$ are glued together to form $M_{n,1}$:
\begin{itemize}
\item $\partial$ is the component of $\partial M_{n,1}$.
\item $\beta_i \subset \partial X$ and $\beta_i' \subset \partial Y$ are glued together 
to form a sphere in $\{S_1,\ldots,S_k\}$.
\item $\delta_j \subset \partial X$ and $\delta'_j \subset \partial X$ are glued together
to form a sphere in $\{S_1,\ldots,S_k\}$.
\end{itemize}
Let $S_a$ (resp.\ $S_b$) be the sphere in $\{S_1,\ldots,S_k\}$ that is formed
when $\beta_1$ is glued to $\beta'_1$ (resp.\ $\beta_2$ is glued to $\beta'_2$).  
We can then find orientation-preserving diffeomorphisms $\psi_1\colon X \rightarrow X$
and $\psi_2\colon Y \rightarrow Y$ such that
\[\psi_1(\beta_1) = \beta_2 \quad \text{and} \quad \psi_1(\beta_2) = \beta_1 \quad \text{and} \quad 
\psi_2(\beta_1') = \beta_2' \quad \text{and} \quad \psi_2(\beta_2') = \beta_1'\]
and such that
\[\psi_1|_{\partial} = \psi_1|_{\beta_i} = \psi_1|_{\delta_j} = \psi_1|_{\delta'_j} = \psi_2|_{\beta'_i} = \text{id} \quad \text{for $3 \leq i \leq r$ and $1 \leq j \leq h$}.\]
Isotoping $\psi_1$ and $\psi_2$ if necessary to make sure their behavior on the boundaries match
up, we can glue the boundary components of $X$ and $Y$ back up and get from $\psi_1$ and
$\psi_2$ an induced diffeomorphism $\phi\colon M_{n,1} \rightarrow M_{n,1}$ that
swaps $S_a$ and $S_b$ while fixing $S_i$ for $1 \leq i \leq k$ with $i \neq a,b$.
It follows that $\phi$ takes $\sigma$ to $\sigma$ but with the opposite
orientation, as desired.
\end{proof}

\end{document}